\title[Covering classes and $1$-tilting cotorsion pairs]{Covering classes and $1$-tilting cotorsion pairs over commutative rings}
\author[S.~Bazzoni]{Silvana Bazzoni}
\address[Silvana Bazzoni]{%
Dipartimento di Matematica ``Tullio Levi-Civita'' \\
Universit\`a di Padova \\
Via Trieste 63, 35121 Padova (Italy)}
\email{bazzoni@math.unipd.it}
\author[G.~Le Gros]{Giovanna Le Gros}
\address[Giovanna Le Gros]{%
Dipartimento di Matematica ``Tullio Levi-Civita'' \\
Universit\`a di Padova \\
Via Trieste 63, 35121 Padova (Italy)}
\email{giovannagiulia.legros@math.unipd.it}
\subjclass[2010]{{13B30, 13C60, 13D07, 18E40}}
\keywords{Covers, projective dimension one, 1-tilting}
\thanks{Research supported by grants from Ministero dell'Istruzione, dell'Universit\`a e della Ricerca (PRIN: ``Categories, Algebras: Ring-Theoretical and Homological Approaches (CARTHA)'') and Dipartimento di Matematica ``Tullio Levi-Civita'' of the Universit\`a di Padova (Research program DOR1828909 ``Anelli e categorie di moduli''). }
\begin{document}
 \maketitle
\begin{abstract}

We are interested in characterising the commutative rings for which a $1$-tilting cotorsion pair $(\A, \T)$ provides for covers, that is when the class $\A$ is a covering class. We use Hrbek's bijective correspondence between the $1$-tilting cotorsion pairs over a commutative ring $R$ and the faithful finitely generated Gabriel topologies on $R$. Moreover, we use results of Bazzoni-Positselski, in particular a generalisation of Matlis equivalence and their characterisation of covering classes for $1$-tilting cotorsion pairs arising from flat injective ring epimorphisms. Explicitly, if $\G$ is the Gabriel topology associated to the $1$-tilting cotorsion pair $(\A, \T)$, and $R_\G$ is the ring of quotients with respect to $\G$, we show that if $\A$ is covering then $\G$ is a perfect localisation (in Stenstr{\"o}m's sense \cite{Ste75}) and the localisation $R_\G$ has projective dimension at most one. Moreover, we show that $\A$ is covering if and only if both the localisation $R_\G$ and the quotient rings $R/J$ are perfect rings for every $J \in \G$. Rings satisfying the latter two conditions are called $\G$-almost perfect.

\end{abstract}

\section{Introduction}
The study of the existence of approximations for a certain class $\C$ is a method for studying the category of all modules. An objective
is to characterise the rings over which every module has a cover (right minimal approximation) provided by $\C$ and furthermore to characterise the class $\C$ itself. 

 In this paper we study when for a $1$-tilting cotorsion pair $(\A, \T)$ over a commutative ring $R$ the class $\A$ provides for covers. 

One of the first examples of the power of studying the module category using the existence of minimal approximations was done for the class of projective modules by Bass \cite{Bass}. Bass introduced and characterised the class of perfect rings which are exactly the rings over which every module has a projective cover. Moreover, he showed that this is equivalent to the class of projective modules being closed under direct limits. It is interesting to study this closure property for the general case of covering classes.

In fact, a famous theorem of Enochs says that if a class $\C$ in $\ModR$ is closed under direct limits, then any module that has a $\C$-precover has a $\C$-cover \cite{Eno}. The converse problem, that is if a covering class $\mathcal{C}$ is necessarily closed under direct limits, is still an open problem which is known as Enochs Conjecture.

 Some significant advancements have been made towards Enochs Conjecture in recent years.
In 2017, Angeleri H\"ugel-\v Saroch-Trlifaj in \cite{AST17} proved that Enochs Conjecture holds for a large class of cotorsion pairs. Explicity, they proved that for a cotorsion pair $(\mathcal{A}, \mathcal{B})$ such that $\mathcal{B}$ is closed under direct limits, $\mathcal{A}$ is covering if and only if it is closed under direct limits. In particular, this holds for all tilting cotorsion pairs. The result of Angeleri H\"ugel-\v Saroch-Trlifaj is based on methods developed in \v{S}aroch's paper \cite{S17}, which uses sophisticated set-theoretical methods in homological algebra. 

Moreover, in a very recent paper by Bazzoni-Positselski-\sto~in 2020 \cite{BPS}, it is proved using an algebraic approach that for a cotorsion pair $(\A, \B)$ such that $\B$ is closed under direct limits and a module $M \in \A \cap \B$, if $\Add (M )$ is covering then $\Add (M)$ is closed under direct limits. In particular, it follows that Enochs Conjecture holds for tilting cotorsion pairs.

We are interested in giving ring theoretic characterisations of the commutative rings for which the class $\A$ of a $1$-tilting cotorsion pair provides for covers, by purely algebraic methods. As mentioned in the previous paragraphs, Enochs conjecture is known to hold for these cotorsion pairs, although via our characterisation of such rings we find yet another proof of Enochs Conjecture. 

In our study we use extensively the bijective correspondence between $1$-tilting cotorsion pairs over commutative rings and faithful finitely generated Gabriel topologies as demonstrated by Hrbek in \cite{H}. 
 More precisely, Hrbek associates to a $1$-tilting class $\T$ the collection of ideals $J\leq R$ which ``divide'' $\T$, that is $\{J \mid JT = T, \forall T \in \T \}$, and in the converse direction he associates to a faithful finitely generated Gabriel topology $\G$ the $1$-tilting class, denoted $\D_\G$, of the $\G$-divisible modules, that is the modules $M$ such that $JM=M$ for every $J\in \G$.

 We are interested in a particular type of Gabriel topology. The ring of quotients of $R$ with respect to $\G$, denoted $R_\G$, is $\G$-divisible if and only if $\G$ arises from a perfect localisation, that is $\psi_R\colon R \to R_\G$ is a flat ring epimorphism and $\G = \{J \leq R \mid \psi_R(J)R_\G = R_\G \}$. Following Stenstr{\"o}m's terminology ~\cite{Ste75}, these Gabriel topologies are called perfect Gabriel topologies. 

In Section \ref{S:cov-Rg-div} we first prove that if the class $\A$ in a $1$-tilting cotorsion pair $(\A, \D_\G)$ over a commutative ring is covering, then $\G$ arises from a perfect localisation and that the projective dimension of $R_\G$ is at most one (see Lemma~\ref{L:cov-G-perf} and Proposition~\ref{P:R_G-tilting})
%
Then, by work of Angeleri H\"{u}gel-S\'{a}nchez in \cite{AS}, $R_\G \oplus R_\G/R$ is a $1$-tilting module with associated cotorsion pair $(\A, \D_\G)$. 

In this situation, we have a much larger range of theories to use, in particular the works of Bazzoni-Positselski and Positselski.

Indeed, in \cite{BP4}, using the theory of contramodules and the tilting-cotilting correspondence from \cite{PS19}, Bazzoni-Positselski give classification results for some $1$-tilting cotorsion pairs satisfying Enochs Conjecture---those which arise from an injective homological ring epimorphism $u\colon R \to U$ in the sense of \cite{AS}. 

 In \cite{BLG}, we prove that if a $1$-tilting class over a commutative ring $R$ is enveloping, then the tilting module arises from an injective flat epimorphism and gave a ring theoretic characterisation of the ring in terms of perfectness of the quotient rings $R/J$ for every ideal $J$ in the associated Gabriel topology.
 
 This paper concerns the covering side.

We briefly summarise the results of this paper as well as some implications. Consider a $1$-tilting cotorsion pair $(\A, \T)$ over a commutative ring and the associated Gabriel topology $\G$. After proving that if $\A$ is covering then $\G$ is a perfect Gabriel topology and the projective dimension of $R_\G$ is at most one, we show the following characterisation (Theorem~\ref{T:characterisation-cov}):\begin{equation*}
 \A \text{ is covering} \Leftrightarrow \begin{cases}
 R_\G \text{ is a perfect ring} \\
 R/J \text{ is a perfect ring for each } J \in \G \\
 \end{cases}
\end{equation*}
It is interesting to note that if $R_\G$ is a perfect ring, then it follows that $\G$ is a perfect Gabriel topology (see Lemma~\ref{L:fdimRg-0}).

The above characterisations use \cite[Theorem 1.2]{BP3}, where Bazzoni and Positselski state that for a (not necessarily injective) ring epimorphism $u\colon R \to U$ such that $\Tor^R_1(U,U)=0$ and $K := U/u(R)$, there is a Matlis equivalence between the full subcategory of $u$-divisible $u$-comodules and the full subcategory of $u$-torsion-free $u$-contramodules via the adjunction pair $\big((- \otimes_R K), \Hom_R(K,-)\big).$ When $u\colon R \to U$ is a flat injective ring epimorphism of commutative rings as in our case, this becomes an equivalence between the $\G$-divisible $\G$-torsion modules and the $\G$-torsion-free $u$-contramodules.

This paper is structured as follows. We begin with some general preliminaries where we introduce minimal approximations and $1$-tilting classes in Section~\ref{S:prelim}.

Section~\ref{S:gab} makes up the background to our main results. We introduce Gabriel topologies as well as some more recent advancements and some of our own results.
 
Next in Section~\ref{S:cov-Rg-div}, we have an initial main result for a $1$-tilting cotorsion pair $(\A, \D_\G)$ over a commutative ring $R$ such that $\A$ is covering. We show that $\G$ is a perfect localisation and that $R_\G \oplus R_\G/R$ is a $1$-tilting module for the $1$-tilting cotorsion pair $(\A, \D_\G)$.

Next we introduce topological rings and $u$-contramodules for a ring epimorphism $u$ in Section~\ref{S:top}, which will be used for our main results. These results are collected from various papers of Positselski and Bazzoni-Positselski. 

In Section~\ref{S:cov-G-perf} we again consider a $1$-tilting cotorsion pair $(\A, \D_\G)$ over a commutative ring $R$ such that $\A$ is covering and show that $R$ is $\G$-almost perfect.

In Section~\ref{S:h-local} we introduce $\clH$-h-local rings with respect to a linear topology $\clH$ over a commutative ring, as a generalisation of results in \cite{BP1}.
In Section~\ref{S:Gperf} we show the converse of the combination of Section~\ref{S:cov-Rg-div} and ~\ref{S:cov-G-perf}. That is, if $(\A, \D_\G)$ is a $1$-tilting cotorsion pair over a commutative ring $R$ and $R$ is $\G$-almost perfect, we show that $\A$ is covering.

\subsection*{Acknowledgement}
The authors are grateful to Leonid Positselski for reading and providing helpful comments to an earlier version of this preprint. 

\section{Preliminaries}\label{S:prelim}
%
In this section we will recall some definitions and some notation.

All rings will be associative with a unit, $\ModR$ ($\RMod$) the category of right (left) $R$-modules over the ring $R$, and $\modR$ the full subcategory of $\ModR$ which is composed of all the modules which have a projective resolution consisting of only finitely generated projective modules.

 For a right $R$-module $M$ and a right ideal $I$ of $R$, we let $M[I]$ denote the submodule of $M$ of elements which are annihilated by the ideal $I$. That is, $M[I]:= \{x \in M \mid xI =0 \}$.
 
Let $\C $ be a class of right $R$-modules. The right $\Ext^1_R$-orthogonal and right $\Ext^\infty_R$-orthogonal classes of $\C$ are defined as follows. \[%
\C ^{\perp_1} =\{M\in \Modr R \ | \ \Ext_R^1(C,M)=0 \ {\rm for \
all\ } C\in \C\} 
\]
\[\C^\perp = \{M\in \Modr R \ | \ \Ext_R^i(C,M)=0 \ {\rm for \
all\ } C\in \C, \ {\rm for \
all\ } i\geq 1 \}\]
The left Ext-orthogonal classes ${}^{\perp_1} \C$ and ${}^\perp \C$ are defined symmetrically. 

If the class $\C$ has only one element, say $\C = \{X\}$, we write $X^{\perp_1}$ instead of $\{X\}^{\perp_1}$, and similarly for the other $\Ext$-orthogonal classes.

We denote by $\clP_n(R)$ ($\F_n(R)$, $\I_n(R)$) the class of right $R$-modules of projective (flat, injective) dimension at most $n$, or simply $\clP_n$ ($\F_n$, $\I_n$) when the ring is clear from the context. We let $\clP_n(\modr R)$ denote the intersection of $\modR$ and $\clP_n(R)$. The projective dimension (weak or flat dimension, injective dimension) of a right $R$-module $M$ is denoted $\pdim M_R$ ($\wdim M_R$, $\idim M_R$).

Given a ring $R$, the \emph{right big finitistic dimension}, $\Fdim R$, is the supremum of the projective dimension of right $R$-modules with finite projective dimension. The \emph{right little finitistic dimension}, f.dim $R$, is the supremum of the projective dimension of right $R$-modules in $\modr R$ with finite projective dimension. 

For an $R$-module $C$, we let $\Add(C)$ denote the class of $R$-modules which are direct summands of direct sums of copies of $C$, and $\Gen(C)$ the class of $R$-modules which are homomorphic images of direct sums of copies of $C$.%

Recall that $A$ is a \emph{pure submodule} of a right $R$-module $B$, or $A\subseteq_\ast B$, if for each finitely presented right module $F$, the functor $\Hom_R(F, -)$ is exact when applied to the short exact sequence $(1)\quad 0\to A\to B\to B/A\to 0$ or equivalently, when for every left $R$-module $M$ the functor $(- \otimes_R M)$ is exact when applied to the sequence $(1)$. 
The embedding $A \hookrightarrow B$ is called a \emph{pure embedding}, the epimorphism $B \twoheadrightarrow B/A$ a \emph{pure epimorphism} and the short exact sequence $(1)$ a \emph{pure exact sequence}. 

Short exact sequences arising from the canonical presentation of a direct limit form an important class of examples of pure exact sequences.

\begin{expl}\label{ex:lim-pure-split}
Let $(M_i, f_{ji}\mid i,j \in I)$ be a direct system of modules and consider the direct limit $\varinjlim_I M_i $. The canonical presentation 
\[
0 \to \Ker \pi \to \bigoplus_{i \in I} M_i \overset{\pi}\to \varinjlim_I M_i \to 0
\]
of $\varinjlim_I M_i$ is an example of a pure exact sequence (see e.g. \cite[Corollary 2.9]{GT12}).
\end{expl}
A module $X$ is called \emph{$\Sigma$-pure-split} if every pure embedding $A\subseteq_\ast B$ with $B \in \Add (X)$ splits.

\subsection{Homological formulae}
The following facts will be useful.
Let $F_R$ be a right $R$-module $_RG_S$ be an $R$-$S$-bimodule such that $\Tor_1^R(F, G)=0$. Then, for every right $S$-module $M_S$ there is the following injective map of abelian groups.
\begin{equation}\label{eq:water}
\Ext^1_R(F, \Hom_S(G, M))\hookrightarrow\Ext^1_S(F\otimes_RG, M))
\end{equation}

Let $f\colon R \to S$ be a ring homomorphism. Suppose $\Tor^R_i(M,S)=0$ for $M \in \ModR$ for all $1 \leq i \leq n$ and $N_S$ is a right $S$-module (and also a right $R$-module via the restriction of scalars functor). Then the following holds for all $i$ such that $1 \leq i \leq n$ (see for example \cite[Lemma 4.2]{PSlav}.
\begin{equation}\label{eq:Ext-form}
\Ext^i_R(M_R, N_R) \cong \Ext^i_S(M_R \otimes_R S, N_S)
\end{equation}
Moreover, if $M$ is as above and $N$ is a left $S$-module, then the following holds. 
\begin{equation}\label{eq:Tor-form}
\Tor^R_i(M_R, _RN) \cong \Tor^S_i(M_R \otimes _R S, _S N)
\end{equation}

\subsection{Covers, precovers and cotorsion pairs}
For this section, $\C$ will be a class of right $R$-modules closed under isomorphisms and direct summands. We recall the definitions of precovers and covers as well as some properties of covers and covering classes. 

Many of the results in this section are taken from Xu's book \cite{Xu}, which generalises work based on Enochs paper \cite{Eno} where he works mainly in the setting where $\C$ is the class of injective modules or flat modules. For this reason, many results are attributed to Enochs-Xu rather than just Enochs.

\begin{defn}A \emph{$\C$-precover} of $M$ is a homomorphism $\phi\colon C \to M$ where $C \in \C$ with the property that for every homomorphism $f\colon C' \to M$ where $C' \in \C$, there exists $f'\colon C' \to C$ such that $\phi f' = f$. 
\begin{equation*}
\xymatrix{
 C' \ar[d]_{\exists f'} \ar[dr]^f&\\
C \ar[r]_\phi & M }
\end{equation*}

A \emph{$\C$-cover} of $M$ is a $\C$-precover with the additional property that for every homomorphism $f\colon C \to C$ such that $\phi f = \phi$, $f$ is an isomorphism.
\begin{equation*}
\xymatrix{
 C \ar[d]_{f}^\cong \ar[dr]^\phi&\\
C \ar[r]_\phi & M }
\end{equation*}
\end{defn}
A $\C$-precover $\phi\colon C \to M$ of $M$ is called a \emph{special $\C$-precover} if $\phi$ is an epimorphism and $\Ker \phi \in \C^\perp$. 

If every $R$-module has a $\C$-cover ($\C$-precover, special $\C$-precover), the class $\C$ is called \emph{covering} (respectively, \emph{precovering}, \emph{special precovering}). If a cover does exist for a module $M$, we can describe the relationship between a $\C$-cover and a $\C$-precover of $M$. 

\begin{thm}\cite[Theorem 1.2.7]{Xu} \label{T:Xu-cov}
Suppose $\C$ is a class of modules and $M$ admits a $\C$-cover and $\phi\colon C \to M$ is a $\C$-precover. Then $C = C' \oplus K$ for submodules $C', K$ of $C$ such that the restriction $\phi_{\restriction_{C'}}$ gives rise to a $\C$-cover of $M$ and $K \subseteq \Ker \phi$.
\end{thm}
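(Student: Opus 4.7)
The plan is to extract the cover as a direct summand of the given precover, using the standard trick of composing a cover and a precover in both directions.

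First, let $\psi\colon C_0\to M$ be a $\C$-cover of $M$ and $\phi\colon C\to M$ the given $\C$-precover. Using that both $\psi$ and $\phi$ are $\C$-precovers, I would produce homomorphisms $f\colon C\to C_0$ and $g\colon C_0\to C$ with $\psi f=\phi$ and $\phi g=\psi$. Composing gives $\psi(fg)=\phi g=\psi$, and the minimality clause in the definition of a cover forces $fg\in\End(C_0)$ to be an automorphism. Let $h=(fg)^{-1}$, so that $f\circ(gh)=\id_{C_0}$. This exhibits $f$ as a split epimorphism with section $gh\colon C_0\to C$.

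Next I would set $C'=\Img(gh)$ and $K=\Ker f$, yielding the internal direct sum decomposition $C=C'\oplus K$. The inclusion $K\subseteq\Ker\phi$ is then immediate from $\phi=\psi f$. Moreover $gh\colon C_0\to C'$ is an isomorphism (it is injective because $f\circ gh=\id_{C_0}$, and surjective onto $C'$ by definition), so $C'\cong C_0\in\C$, and $C'\in\C$ since $\C$ is closed under isomorphism.

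It remains to check that $\phi\!\mid_{C'}\colon C'\to M$ is a $\C$-cover. I would first observe the identity $\phi\!\mid_{C'}\circ(gh)=\phi\circ gh=\psi\circ h$. Since $h$ is an automorphism of $C_0$, the map $\psi\circ h$ inherits both the precover property and the minimality property of $\psi$, hence is itself a $\C$-cover of $M$; transporting along the isomorphism $gh\colon C_0\xrightarrow{\sim} C'$ then shows $\phi\!\mid_{C'}$ is a $\C$-cover. The only mildly delicate point, and the step I would write out most carefully, is this last verification that the minimality property passes through the automorphism $h$: if $\alpha\colon C'\to C'$ satisfies $\phi\!\mid_{C'}\circ\alpha=\phi\!\mid_{C'}$, conjugating by the isomorphism $gh$ produces an endomorphism of $C_0$ that $\psi$-factors through itself, whence an automorphism by the cover property of $\psi$; pulling back gives that $\alpha$ is an automorphism as well.
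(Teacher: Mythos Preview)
The paper does not supply its own proof of this statement; it simply cites \cite[Theorem 1.2.7]{Xu}. Your argument is correct and is essentially the standard proof found in Xu's book: lift in both directions between a chosen cover and the given precover, use minimality to get a split epimorphism, and read off the direct-sum decomposition. There is nothing to add.
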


\begin{cor}\cite[Corollary 1.2.8]{Xu}\label{C:Xu-cov-cor}
Suppose $M$ admits a $\C$-cover. Then a $\C$-precover $\phi\colon C \to M$ is a $\C$-cover if and only if there is no non-zero direct summand $K$ of $C$ contained in $\Ker \phi$.
\end{cor}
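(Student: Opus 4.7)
The plan is to deduce this corollary directly from Theorem~\ref{T:Xu-cov} (Xu's splitting theorem for $\C$-precovers when a cover exists), handling the two implications separately.

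For the forward direction, I would assume $\phi\colon C \to M$ is a $\C$-cover and suppose for contradiction that there is a direct sum decomposition $C = C' \oplus K$ with $K \subseteq \Ker\phi$. Consider the endomorphism $f\colon C \to C$ given by the composition of the projection $C \twoheadrightarrow C'$ with the inclusion $C' \hookrightarrow C$. Since $K \subseteq \Ker\phi$, the map $f$ satisfies $\phi f = \phi$. The cover property then forces $f$ to be an automorphism of $C$. But $f$ factors through $C'$ and hence annihilates $K$, so the injectivity of $f$ yields $K = 0$.

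For the converse, I would assume $\phi\colon C \to M$ is a $\C$-precover with no non-zero direct summand of $C$ contained in $\Ker\phi$. Invoking Theorem~\ref{T:Xu-cov}, one obtains a decomposition $C = C' \oplus K$ such that $\phi_{\restriction_{C'}}\colon C' \to M$ is a $\C$-cover and $K \subseteq \Ker\phi$. The hypothesis immediately forces $K = 0$, hence $C = C'$ and $\phi$ itself is a $\C$-cover.

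There is no real obstacle here, since both directions are one-step applications of Theorem~\ref{T:Xu-cov} and the definition of a cover; the only mild subtlety is remembering that the cover property requires the endomorphism $f$ with $\phi f = \phi$ to be a (two-sided) isomorphism of $C$, which is what rules out a non-trivial $K \subseteq \Ker\phi$ in the first direction.
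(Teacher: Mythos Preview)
Your argument is correct in both directions. Note, however, that the paper does not supply its own proof of this corollary: it is simply quoted from \cite[Corollary 1.2.8]{Xu} without argument, so there is nothing to compare against. What you have written is essentially the standard proof one would give, and in particular the converse is exactly the intended one-line deduction from Theorem~\ref{T:Xu-cov}.
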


The following two theorems will be useful when working with covers.

\begin{thm}\cite[Theorem 1.4.7, Theorem 1.4.1]{Xu}\label{T:Xu-sums-cov} 
\begin{enumerate}
\item[(i)] Suppose for each integer $n \geq 1$, $\phi_n\colon C_n \to M_n$ is a $\C$-cover. Then if $\bigoplus_n \phi_n \colon \bigoplus_n C_n \to \bigoplus_n M_n$ is a $\C$-precover, then it is also a $\C$-cover. 
 \item[(ii)] Assume that $\bigoplus \mu_n\colon \bigoplus_nC_n\to \bigoplus_nM_n$ is a $\C$-cover of $\bigoplus_nM_n$ and let $f_n\colon C_n\to C_{n+1}$ be a family of homomorphisms such that $\Img f_n\subseteq \Ker \phi_{n+1}$.
 Then, for each $x\in C_1$ there is an integer $m$ such that $f_m f_{m-1} \dots f_1(x)=0$.
 \end{enumerate}
\end{thm}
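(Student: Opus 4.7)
The statement bundles two results, both of which rest on engineering an endomorphism of $\bigoplus_n C_n$ that sits over the identity through the relevant cover and then exploiting the fact that such endomorphisms must be invertible.

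For part (ii), the plan is to define the shifted-difference map
\[
F\colon \bigoplus_n C_n \to \bigoplus_n C_n, \qquad F\bigl((x_n)_n\bigr) = \bigl(x_n - f_{n-1}(x_{n-1})\bigr)_n
\]
with the convention $f_0 = 0$. Because $\Img f_{n-1} \subseteq \Ker \mu_n$, applying $\mu_n$ to the $n$-th coordinate of $F$ recovers $\mu_n(x_n)$, so $(\bigoplus_n \mu_n) \circ F = \bigoplus_n \mu_n$. Since $\bigoplus_n \mu_n$ is assumed to be a $\C$-cover, $F$ is forced to be an automorphism. To extract the conclusion, solve $F(y) = (x, 0, 0, \ldots)$ coordinate-by-coordinate: $y_1 = x$ and $y_{n+1} = f_n(y_n) = f_n f_{n-1} \cdots f_1(x)$. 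Because $y$ lies in the direct sum, its support is finite, forcing $f_m f_{m-1} \cdots f_1(x) = 0$ for all sufficiently large $m$.

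For part (i), let $h\colon \bigoplus_n C_n \to \bigoplus_n C_n$ satisfy $\phi \circ h = \phi$, where $\phi := \bigoplus_n \phi_n$. Using the structural inclusions $\iota_j^C$ and projections $\pi_i^C$, write $h$ as a column-finite matrix $h_{ij} := \pi_i^C h \iota_j^C \colon C_j \to C_i$. The identity $\phi h = \phi$ unpacks to $\phi_j h_{jj} = \phi_j$ and $\phi_i h_{ij} = 0$ for $i \ne j$, so the cover property of each $\phi_j$ promotes $h_{jj}$ to an automorphism of $C_j$. For any finite index set $S$, the $S$-truncation $h^S := (h_{ij})_{i,j \in S}$ is an endomorphism of $\bigoplus_{i \in S} C_i$ satisfying $(\bigoplus_{i \in S} \phi_i) \circ h^S = \bigoplus_{i \in S} \phi_i$, and a Schur-complement induction on $|S|$ (peeling off one index at a time, each step using $\phi_i h_{ij} = 0$ together with cover invertibility of the remaining diagonal) shows each $h^S$ is an automorphism. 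Injectivity of $h$ is then immediate: if $h(x) = 0$ with $x$ supported in a finite set $S$, projecting $h(x) = 0$ to $\bigoplus_{i \in S} C_i$ gives $h^S(x) = 0$, hence $x = 0$.

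The main obstacle is surjectivity of $h$ in the infinite setting: the finite Schur-complement only controls the ``visible'' block, and the tail entries $h_{ij}$ with $i$ outside the chosen finite $S$ obstruct inverting $h$ one block at a time. The route I would follow is either a limit-compatibility argument across an increasing family of finite blocks -- building preimages $x^{(N)}$ of a finitely-supported target $y$ and correcting from one $N$ to the next by absorbing the tail contributions into $\Ker \phi_i$ summands -- or an indirect argument: assume $h$ fails to be surjective and leverage the precover hypothesis on $\phi$ to produce, at some coordinate $n$, a nonzero direct summand of $C_n$ lying in $\Ker \phi_n$, contradicting the cover property of $\phi_n$ via Corollary~\ref{C:Xu-cov-cor}. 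Closing this gap cleanly is the technical heart of the argument.
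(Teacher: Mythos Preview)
The paper does not prove this statement; it is cited from Xu's monograph without argument. So there is no in-paper proof to compare against, and your attempt must be measured against Xu's original.

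Your proof of (ii) is correct and is exactly the standard (Xu's) argument: the shifted-difference endomorphism $F$ lies over $\bigoplus_n\mu_n$, hence is an automorphism by the cover hypothesis, and the unique solution of $F(y)=(x,0,0,\dots)$ has $y_{m+1}=f_m\cdots f_1(x)$, which must vanish for large $m$ since $y$ has finite support.

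For (i), your injectivity argument is sound. The Schur-complement induction really does establish that $\bigoplus_{i\in S}\phi_i$ is a $\C$-cover for each finite $S$: the Schur complement again satisfies the cover identity for the smaller block, and $\bigoplus_{i\in S'}\phi_i$ inherits the precover property from $\phi$ by projecting a lift through $\phi$ to the $S'$-coordinates. From this, injectivity of $h$ on the full direct sum follows as you indicate. But surjectivity of $h$ is a genuine gap that you do not close. Your first proposed route---iteratively correcting a finitely-supported approximate preimage---produces corrections whose supports may expand without bound, and the direct sum carries no topology in which such a process converges; nothing forces termination. Your second route via Corollary~\ref{C:Xu-cov-cor} presupposes that $\bigoplus_n M_n$ already admits a $\C$-cover, which is precisely the conclusion of (i), so invoking it here is circular. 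The surjectivity step is where the substantive content of Xu's Theorem~1.4.7 lies; you should consult his argument directly rather than leave this as an acknowledged hole.
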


A pair of classes of modules $(\A, \B)$ is a \emph{cotorsion pair} provided that $\mbox{$\A$} = {}^{\perp_1}
\mbox{$\B$}
$ and $\mbox{$\B$} = \mbox{$\A$} ^{\perp_1}$. A cotorsion pair is called \emph{complete} if $\B$ is special preenveloping or equivalently $\A$ is special precovering. A famous result due to Eklof-Trlifaj states that if $\clS$ is a set, the cotorsion pair  $({}^{\perp_1}(\clS^{\perp_1}), \clS^{\perp_1})$ generated by $\clS$ is complete (see \cite[Theorem 6.11]{GT12}).

A cotorsion pair $(\A, \B)$ is called \emph{hereditary} if $\Ext^i_R(A,B)=0$ for every $A \in \A$, $B \in \B$ and $i > 0$ (see \cite[Lemma 5.24]{GT12}). Thus if a cotorsion pair $(\A, \B)$ is hereditary, then $\A={^\perp \B}$ and $\B=\A ^\perp $, thus there is no need to differentiate between $\perp_1$ and $\perp$.

A cotorsion pair $(\A, \B)$ is of \emph{finite type} if there is a set $\clS$ of modules in $\modr R$ such that $\clS^\perp =\B$ (recall $\modr R$ denotes the class of modules admitting a projective resolution consisting of finitely generated projective modules). In other words, $(\A, \B)$ is of finite 
type if and only if $\B=(\A\cap \modr R)^\perp$. 

\subsection{Perfect rings and projective covers}Before giving a characterisation of perfect commutative rings, we must recall some definitions.

One can generalise the notion of a nilpotent ideal to a $T$-nilpotent ideal where the $T$ stands for ``transfinite.'' An ideal $I$ of $R$ is said to be \emph{right $T$-nilpotent} if for every sequence of elements $a_1, a_2, ..., a_i, ...$ in $I$, there exists an $n >0$ such that $a_n a_{n-1} \cdots a_1 =0$. For \emph{left $T$-nilpotence}, one must have $a_1 a_{2} \cdots a_n =0$.

The property of $T$-nilpotence of an ideal has interesting consequences. In particular, an ideal $I$ is right T-nilpotent if and only if for every non-zero right $R$-module $M$, $MI$ is superfluous in $M$, $MI \ll M$ (see \cite[Lemma 28.3]{AF}).


 Let $J(R)$ denote the Jacobson radical of $R$. First recall that a ring $R$ is \emph{semilocal} if $R/J(R)$ is semisimple. If $R$ is commutative, then $R$ is semilocal if and only if it has only finitely many maximal ideals. A ring $R$ is \emph{semiartinian} if every non-zero factor of $R$ contains a simple $R$-submodule. 

The following proposition is a composite of well-known characterisations of commutative perfect rings (see for example \cite{Bass}, \cite{Lam}).
\begin{prop} \label{P:perfect}
Suppose $R$ is a commutative ring. The following statements are equivalent for $R$.
\begin{itemize}
\item[(i)] $R$ is perfect (that is, every $R$-module has a projective cover).
\item[(ii)] $\Fdim R =0$. 
\item[(iii)] $R$ is a finite product of local rings, each one with a $T$-nilpotent maximal ideal.
\item[(iv)] $R$ is semilocal and semiartinian, i.e., $R$ has only finitely many maximal ideals and every non-zero factor of $R$ contains a simple $R$-module.
\end{itemize}
Additionally, if $R$ is perfect then every element of $R$ is either a unit or a zero-divisor.
\end{prop}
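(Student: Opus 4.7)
The plan is to treat this as a composite of classical results and organize the argument as a small set of pairwise equivalences, attributing the heaviest lifting to Bass's original theorem.

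First I would establish (i) $\Leftrightarrow$ (iii), which is the commutative specialization of Bass's characterization. The nontrivial direction uses that $R$ perfect implies $J(R)$ is $T$-nilpotent and $R/J(R)$ is semisimple. Since $R$ is commutative, $R/J(R)$ is a finite product of fields, so $R$ has finitely many maximal ideals $\m_1,\ldots,\m_n$ with pairwise coprime powers modulo $J(R)$; and because $J(R)$ is nil (a consequence of $T$-nilpotence), the orthogonal idempotents of $R/J(R)$ lift to a complete orthogonal set in $R$, giving $R=\prod_{i=1}^n R_i$ with each $R_i$ local. The maximal ideal of each $R_i$ is the image of $J(R)$, hence still $T$-nilpotent. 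The converse direction is immediate from the local case and the fact that a finite product of perfect rings is perfect.

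Next I would do (i) $\Leftrightarrow$ (iv). Semilocal is equivalent to $R/J(R)$ being semisimple in the commutative setting, so the content is that, under this hypothesis, $J(R)$ is $T$-nilpotent iff $R$ is semiartinian. The forward direction uses the standard fact that a $T$-nilpotent ideal $I$ satisfies $MI\ll M$ for every nonzero $M$, which, combined with semisimplicity of $R/J(R)$, forces a simple submodule in every nonzero factor of $R$. The reverse direction builds, for each element $x\in J(R)$ and each sequence $(a_n)$ in $J(R)$, an ascending socle argument on $R/\bigcap_n (a_1\cdots a_n R)$ to force the sequence of products to stabilize at $0$. For (i) $\Leftrightarrow$ (ii) I would cite that $R$ is perfect iff every flat module is projective (Bass); the direction $R$ perfect $\Rightarrow \Fdim R=0$ is then clear because the $n$-th syzygy of a module of finite projective dimension is flat hence projective, and the converse is easily reduced to showing flat modules are projective via a two-step projective resolution argument from $\Fdim R=0$.

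Finally, for the assertion that every element is a unit or a zero-divisor, I would use the decomposition of (iii): write $r=(r_i)\in \prod R_i$. If some $r_i\in\m_i$, then applying $T$-nilpotence to the constant sequence $r_i,r_i,\ldots$ gives $r_i^n=0$ for some $n\ge 1$, so $r_i$ is nilpotent and hence a zero-divisor in $R_i$, producing a zero-divisor of $r$ in $R$; otherwise every $r_i$ is a unit, and $r$ is a unit.

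The main obstacle is (i) $\Leftrightarrow$ (ii): the implication $\Fdim R=0\Rightarrow R$ perfect is the least ``elementary'' piece, since it requires going from a projectivity statement on finite-projective-dimension modules back to the Bass hypothesis (every flat module projective). In our commutative setting I expect to handle this by localizing at each maximal ideal and invoking the local structure theory, but it is the step most worth attributing to the literature rather than reproving here.
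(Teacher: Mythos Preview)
The paper does not prove this proposition at all: it is stated as ``a composite of well-known characterisations of commutative perfect rings'' with references to Bass and Lam, and no argument is supplied. Your proposal therefore goes well beyond the paper's treatment by actually sketching the equivalences, and your organisation into pairwise equivalences anchored on Bass's theorem is sound and matches the spirit of the references the paper invokes.

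Two places in your sketch are worth tightening. For (ii)$\Rightarrow$(i), your ``two-step projective resolution argument'' is vague, because an arbitrary flat module need not have finite projective dimension, so $\Fdim R=0$ says nothing about it directly. The clean route, and the one implicitly suggested by the material immediately following the proposition in the paper, is via Bass modules: every Bass right $R$-module has $\pdim\le 1$, so $\Fdim R=0$ forces each Bass module to be projective, whence each has a projective cover, and then Lemma~\ref{L:Bass-mod-perf}(ii) gives that $R$ is perfect. This avoids localisation entirely. For (iv)$\Rightarrow$(i), your socle argument is plausible but under-specified; the standard route is to show that semiartinian forces $T$-nilpotence of $J(R)$ by observing that if $a_1a_2\cdots a_n\neq 0$ for all $n$, the descending chain of cyclic right ideals $a_1R\supseteq a_1a_2R\supseteq\cdots$ yields a nonzero module with zero socle. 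Your argument for the final ``unit or zero-divisor'' claim via the decomposition in (iii) and nilpotence from the constant sequence is correct and is the intended one.
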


 It was noticed by Bass in \cite{Bass} that it is sufficient to look at the following nice class of modules to decide if the ring is perfect. 
 
If $R$ is a ring and $\{ a_1, a_2, \dots, a_n, \dots\}$ is a sequence of elements of $R$, a \emph{Bass right $R$-module} is a flat module of the following form.
\[F=\varinjlim(R\overset{a_1}\to R\overset{a_2}\to R\overset{a_3}\to\cdots).\]
That is, $F$ is the direct limit of the direct system obtained by considering the left multiplications by the elements $a_i$ on $R$.
A direct limit presentation of $F$ is given by the following short exact sequence.
\[0\to \bigoplus_{n\in \bbN}R\overset{\sigma}\to \bigoplus_{n\in \bbN}R\to F\to 0\]
By the above projective presentation, it is clear that all Bass $R$-modules have projective dimension at most one. Thus the class of Bass $R$-modules is contained in $\F_0(R) \cap \clP_1(R)$.
The following result is well known and it is implicitly proved in Bass' paper \cite{Bass}. 

\begin{lem}\label{L:Bass-mod-perf}
Let $R$ be a ring.
\begin{enumerate}
\item[(i)] If all flat right $R$-modules have projective covers, then all the flat right $R$-modules are projective, so the ring is right perfect. 
\item[(ii)] If all Bass right $R$-modules have projective covers then the ring $R$ is right perfect. 
\end{enumerate}
\end{lem}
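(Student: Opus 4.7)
For (i), I plan to reduce to (ii): every Bass module is flat as a direct limit of copies of $R$, so the hypothesis of (i) implies in particular the hypothesis of (ii). Hence (ii) gives $R$ right perfect, and Bass's classical theorem (part of the toolkit around Proposition~\ref{P:perfect}) then forces every flat right $R$-module to be projective.

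For (ii), the strategy is to use the hypothesis together with Theorem~\ref{T:Xu-sums-cov}(ii) to establish right $T$-nilpotence of $J(R)$ (and, in the commutative setting of interest, the semilocal condition from Proposition~\ref{P:perfect}). Given any sequence $(a_n)_{n\geq 1}\subseteq J(R)$, the plan is to set $C_n=R$, $M_1=R$ with $\mu_1=\id$, and for $n\geq 2$ take $M_n=R/a_{n-1}R$ with $\mu_n$ the canonical projection, which is a projective cover because $a_{n-1}\in J(R)$ makes $a_{n-1}R$ superfluous in $R$. The transition maps $f_n\colon R\to R$ are given by left multiplication by $a_n$, so that $\Img f_n = a_n R = \ker\mu_{n+1}$. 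Once $\bigoplus_n\mu_n$ is known to be a projective cover of $\bigoplus_n M_n$, Theorem~\ref{T:Xu-sums-cov}(ii) applied to $x=1\in C_1$ forces $a_m a_{m-1}\cdots a_1=0$ for some $m$, which is precisely the desired $T$-nilpotence.

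The main obstacle is the cover verification for $\bigoplus_n\mu_n$: a countable direct sum of projective covers is not automatically a cover, since $\bigoplus_n a_{n-1}R$ need not be superfluous in $\bigoplus_n R$ even when each individual $a_{n-1}R$ is. The bridge I would use is the Bass module $F=\varinjlim\bigl(R\xrightarrow{a_1\cdot}R\xrightarrow{a_2\cdot}\cdots\bigr)$ itself: by hypothesis $F$ admits a projective cover, and its canonical projective presentation $0\to\bigoplus_n R\xrightarrow{\sigma}\bigoplus_n R\to F\to 0$ with $\sigma(e_n)=e_n-e_{n+1}a_n$, combined with Theorem~\ref{T:Xu-cov}, should transfer enough cover-theoretic structure to $\bigoplus_n M_n$ so that Theorem~\ref{T:Xu-sums-cov}(i) upgrades the natural precover $\bigoplus_n\mu_n$ to a projective cover. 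The semilocal condition in the commutative case would then follow from a parallel Bass-module construction using sequences that detect infinite families of maximal ideals, after which Proposition~\ref{P:perfect} delivers perfectness.
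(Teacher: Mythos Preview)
The paper does not supply a proof of this lemma; it is stated as well known with a reference to Bass's paper, so there is nothing to compare against directly. Your reduction of (i) to (ii) is correct.

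For (ii) there is a genuine gap. You correctly isolate the obstacle---showing that $\bigoplus_n\mu_n$ is a projective cover of $\bigoplus_n M_n=R\oplus\bigoplus_{n\ge2}R/a_{n-1}R$---but the proposed ``bridge'' via the Bass module $F$ is not carried out. Theorem~\ref{T:Xu-cov} applied to the presentation of $F$ produces a summand decomposition of $R^{(\mathbb N)}$ adapted to $F$; you give no mechanism by which this yields a projective cover of the quite different module $\bigoplus_n M_n$, and ``should transfer enough cover-theoretic structure'' is not an argument. Moreover, even if right $T$-nilpotence of $J(R)$ were established this way, the lemma is stated for arbitrary rings: one still needs $R/J(R)$ semisimple, and Proposition~\ref{P:perfect} (which you invoke) is only available in the commutative case, while your ``parallel Bass-module construction'' for the semilocal part is not spelled out. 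The argument that actually works, and is what Bass's paper implicitly contains, stays with $F$ throughout: one shows that a flat module admitting a projective cover is already projective---the kernel of the cover is simultaneously pure (by flatness of $F$) and superfluous in a projective, and a short argument forces it to vanish---so under your hypothesis every Bass right module is projective, and Bass's characterisation of right perfect rings then finishes. The detour through $\bigoplus_n M_n$ and Theorem~\ref{T:Xu-sums-cov} is unnecessary.
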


Recall that the \emph{socle} of a module $M$, denoted $\soc(M)$, is the sum of its simple submodules. A module $M$ is \emph{semiartinian} if every non zero quotient of $M$ has a non zero socle. Semiartinian modules are also called \emph{Loewy modules} since they admit a \emph{Loewy series}, that is a continuous filtration by semisimple (or even simple) modules constructed by transfinite induction. Thus if $R$ is a perfect commutative ring, then every module is a Loewy module by Proposition~\ref{P:perfect} (iv).

It will be useful to observe that the notion of superfluous subobject and of projective covers can be generalised from the category of $R$-modules to an arbitrary abelian category, as pointed out in Section 3 of \cite{Pos5}. 

Let $\A$ be an abelian category with enough projective objects. A subobject $B$ of an object $A$ in $\A$ is called \emph{superfluous} if for every subobject $H$ of $A$ such that $B+H=A$ one has $H=A$. Then, an epimorphism $h\colon P \to C$ with $P$ a projective object in $\A$ is a \emph{projective cover} of the object $C$ if $\Ker h$ is superfluous in $P$.

\subsection{$1$-tilting cotorsion pairs}
We now introduce $1$-tilting classes and modules, as well as some properties that we will use. 

A right $R$-module $T$ is \emph{$1$-tilting} if the following conditions hold (as defined in \cite{CT}).
\begin{enumerate}
\item[(T1)] $\pdim_R T \leq1$.
\item[(T2)] $\Ext_R^i (T, T^{(\kappa)}) =0$ for every cardinal $\kappa$ and every $i >0$.
\item[(T3)] There exists an exact sequence of the following form where $T_0, T_1$ are modules in $\Add(T)$.
\[
0 \to R \to T_0 \to T_1 \to 0
\]
\end{enumerate}
Equivalently, a module $T$ is $1$-tilting if and only if $T^{\perp_1} = \Gen(T)$ (\cite[Proposition 1.3]{CT}). The cotorsion pair generated by $T$, $({}^\perp(T^{\perp}), T^\perp)$, is called a \emph{$1$-tilting cotorsion pair} and the torsion class $T^\perp$ is called the \emph{$1$-tilting class}. Two $1$-tilting modules $T$ and $T'$ are \emph{equivalent} if they define the same $1$-tilting class, that is $T^\perp = T'^\perp$
(equivalently, if $\Add (T)=\Add(T')$). If $T$ is a $1$-tilting module which generates a $1$-tilting class $\T$, then we say that $T$ is a \emph{$1$-tilting module associated to $\T$}. 

The 
class $\T \cap {}^\perp \T$ coincides with $\Add(T)$ (see \cite[Lemma 13.10]{GT12}). As the $1$-tilting cotorsion pair is generated by a set the tilting cotorsion pair is complete by \cite[Theorem 6.11]{GT12}. Also, it is hereditary as the right-hand class $\T = \Gen (T) = T^\perp$ is clearly closed under epimorphic images, so is a coresolving class.
 Moreover, by \cite{BH08}, the $1$-tilting cotorsion pair $({}^\perp \T, \T)$ is of finite type.

The following proposition gives a necessary and sufficient condition for the left-hand side of a $1$-tilting cotorsion pair to be closed under direct limits. 

\begin{prop}\label{P:indlim-sigmapuresplit}\cite[Proposition 13.55]{GT12}
Let $T$ be a tilting module with $(\A, \T)$ the associated tilting cotorsion pair. Then $\A$ is closed under direct limits if and only if $T$ is $\Sigma$-pure-split.
\end{prop}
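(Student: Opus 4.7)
The plan is to combine the structural features of the tilting cotorsion pair $(\A,\T)$ -- complete, hereditary, of finite type (so $\T$ is definable), and satisfying $\A\cap\T=\Add(T)$ -- with the description of purity from Example~\ref{ex:lim-pure-split} that every pure short exact sequence is a directed colimit of split short exact sequences. The definability of $\T$ (coming from the finite-type condition) ensures closure under direct limits, pure submodules and pure quotients, a property that will be used in both directions.

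For the forward implication, assume $\A$ is closed under direct limits. Since $\T$ itself is already closed under direct limits, the intersection $\A\cap\T=\Add(T)$ is closed under direct limits as well. To conclude that $T$ is $\Sigma$-pure-split, take a pure embedding $A\subseteq_{\ast}B$ with $B\in\Add(T)$. Definability of $\T$ yields $A\in\T$ and $B/A\in\T$, so it suffices to show $B/A\in\A$: then $B/A\in\A\cap\T=\Add(T)$ and the orthogonality $\Ext^{1}_{R}(B/A,A)=0$ splits the sequence. Writing the pure exact sequence $0\to A\to B\to B/A\to 0$ as a directed colimit of split short exact sequences $0\to A\to A\oplus C_{i}\to C_{i}\to 0$ with $C_{i}$ finitely presented, and using the closure of $\A$ under direct limits, direct sums, summands and extensions together with the control provided by $B\in\Add(T)\subseteq\A$, one identifies $B/A=\varinjlim C_{i}$ as a direct limit taking place inside $\A$.

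For the converse, let $(A_{i})_{i\in I}$ be a directed system in $\A$ with colimit $A$ and form the canonical pure presentation $0\to K\to\bigoplus_{i\in I}A_{i}\to A\to 0$ from Example~\ref{ex:lim-pure-split}. By completeness of the cotorsion pair, take a special $\A$-precover $0\to T'\to\bar A\to A\to 0$ with $\bar A\in\A$ and $T'\in\T$; it suffices to split this precover, since then $A$ is a summand of $\bar A\in\A$. Pulling the precover back along the canonical surjection $\bigoplus_{i}A_{i}\twoheadrightarrow A$ and exploiting $\Ext^{1}_{R}(\bigoplus_{i}A_{i},T')=0$, which holds because $\bigoplus_{i}A_{i}\in\A$, one assembles a diagram in which $T'$ appears as a pure submodule of a module in $\Add(T)$. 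The $\Sigma$-pure-splitness hypothesis splits this pure embedding, and transporting the splitting back through the pullback diagram splits the special precover, yielding $A\in\A$.

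The main obstacle is the identification $B/A\in\A$ in the forward direction: the finitely presented modules $C_{i}$ appearing in the canonical presentation of $B/A$ as a filtered colimit need not lie in $\A$ individually, so the closure of $\A$ under direct limits cannot be applied na\"ively. The resolution is to exploit the split middle terms $A\oplus C_{i}$, which inherit control from $B\in\Add(T)\subseteq\A$ via the directed system of Example~\ref{ex:lim-pure-split}, and to arrange matters so that the relevant colimit is computed inside $\A$. This bookkeeping, essentially an interplay between the finite-type property of $\T$ and the direct-limit closure of $\A$, is the technical heart of the proof; once it is carried out, the remainder follows from standard cotorsion-pair manipulations.
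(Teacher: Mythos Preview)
The paper does not give a proof of this proposition; it is stated purely as a citation of \cite[Proposition~13.55]{GT12}. There is therefore no in-paper argument to compare your proposal against, and I assess it on its own merits.

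Both directions of your sketch contain substantive gaps.

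In the forward direction you correctly isolate the crux --- showing that the pure quotient $B/A$ lies in $\A$ --- but you do not establish it. The presentation of a pure exact sequence as a directed colimit of split sequences $0\to A\to A\oplus C_i\to C_i\to 0$ with \emph{fixed} first term and finitely presented $C_i$ is not what Example~\ref{ex:lim-pure-split} or the standard purity theory provides (there all three terms vary), and even granting such a presentation the $C_i$ have no reason to lie in $\A$, so closure of $\A$ under direct limits cannot be invoked on them. You concede this yourself by calling it ``the technical heart of the proof'' and leaving it undone; as written, the forward implication is a plan rather than a proof.

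In the converse direction the assertion ``one assembles a diagram in which $T'$ appears as a pure submodule of a module in $\Add(T)$'' is the decisive step, and it is not justified. The pullback of the precover $0\to T'\to\bar A\to A\to 0$ along $\bigoplus_i A_i\twoheadrightarrow A$ yields $0\to T'\to P\to\bigoplus_i A_i\to 0$, which splits because $\bigoplus_i A_i\in\A$, so $P\cong T'\oplus\bigoplus_i A_i$; but neither $T'\in\T$ nor $\bigoplus_i A_i\in\A$ need lie in $\Add(T)=\A\cap\T$, and nothing in your construction produces a module in $\Add(T)$ containing $T'$ purely. The $\Sigma$-pure-split hypothesis thus has nothing to act on. The usual route instead passes through special $\T$-preenvelopes $0\to A_i\to B_i\to C_i\to 0$, where extension-closure of $\A$ forces $B_i\in\A\cap\T=\Add(T)$, and then applies $\Sigma$-pure-splitness to a pure sequence whose middle term genuinely sits in $\Add(T)$; your pullback manoeuvre does not reach that configuration.
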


\section{Gabriel topologies}\label{S:gab}

In this section we recall the notions of torsion pairs and Gabriel topologies as well as proving some results that will be useful to us later on. We will conclude by discussing some advancements that relate Gabriel topologies to $1$-tilting classes over commutative rings. The reference for this section, in particular for torsion pairs and Gabriel topologies, is Stenstr\"om's book \cite[Chapters VI and IX]{Ste75}.

We will start by giving definitions in the case of a general ring with unit (not necessarily commutative). Everything will be done with reference to right $R$-modules (and right Gabriel topologies), but everything can be done analogously for left $R$-modules.

A \emph{torsion pair} $(\E, \F)$ in $\ModR$ is a pair of classes of modules in $\ModR$ which are mutually orthogonal with respect to the $\Hom$-functor and maximal with respect to this property. That is, $\E= \{M \mid \Hom_R(M, F)=0 \text { for every } F \in \F \}$ and $\F= \{M \mid \Hom_R(X, M)=0 \text{ for every } X \in \E \}$.
The class $\E$ is called a \emph{torsion class} and $\F$ a \emph{torsion-free class}.

Torsion and torsion-free classes are characterised by closure properties: A
 class $\C$ of modules is a {torsion class} if and only if it is closed under extensions, direct sums, and epimorphic images, and $\C$ is a {torsion-free class} if and only if it is closed under extensions, direct products and submodules \cite[Propositions VI.2.1 and VI.2.2]{Ste75}.
  A torsion pair $(\E, \F)$ is called \emph{hereditary} if the torsion class $\E$ is also closed under submodules, which is equivalent to $\F$ being closed under injective envelopes.

A ring $R$ is a \emph{topological ring} if it has a topology such that the ring operations are continuous. A topological ring $R$ is \emph{right linearly topological} if it has a topology with a basis of neighbourhoods of zero consisting of right ideals of $R$. A ring $R$ with a right Gabriel topology is an example of a right linearly topological ring (see \cite[Section VI.4]{Ste75}).

 A \emph{right Gabriel topology $\G$} on a ring $R$ is a filter of open right ideals in a right linear topology on $R$ satisfying an extra condition.
 This condition is such to guarantee that there is a bijective correspondence between right Gabriel topologies $\G$ on $R$ and hereditary torsion classes in $\ModR$ (see \cite[Theorem VI.5.1]{Ste75}).
 
The bijection is given by the following assignments.
\[
\xymatrix@R=8pt{
\bigg\{\txt{right Gabriel topologies \\ on $R$}\bigg\} \ar@<0.7ex>[r]^\Phi & \bigg\{\txt {hereditary torsion \\classes in $\ModR$} \bigg\} \ar@<0.7ex>[l]^\Psi \\
\Phi\colon \G \ar@{|->}[r] & \E_\G= \{ M\mid \Ann x \in \G, \forall x \in M \}\\
\{ J \leq R\mid R/J \in \E \} & \E \colon \Psi \ar@{|->}[l] }
\]
\\
The torsion pair corresponding to a Gabriel topology $\G$ will be denoted by $(\E_\G, \F_\G)$. It is generated by the cyclic modules $R/J$ where $J \in \G$, so $\F_\G$ consists of the modules $N$ such that $\Hom_R(R/J, N)=0$ for every $J\in \G$. The classes $\E_\G$ and $\F_\G$ are referred to as the \emph{$\G$-torsion} and \emph{$\G$-torsion-free} classes, respectively.

For a right $R$-module $M$ let $t_\G$ denote the associated (left exact) radical, thus $t_\G(M)$ is the maximal $\G$-torsion submodule of $M$, or sometimes $t(M)$ when the Gabriel topology is clear from context.
\subsection{Modules of quotients}\label{SS:mod-quot-gab}
A right Gabriel topology allows us to generalise localisations of commutative rings with respect to a multiplicative subset to the non-commutative setting.

The \emph{module of quotients} of the Gabriel topology $\G$ of a right $R$-module $M$ is the module $M_\G$ defined as follows. 
\[
M_\G := \varinjlim_{\substack{J \in \G}} \Hom_R(J, M/t_\G(M))
\]
Furthermore, there is the following canonical homomorphism. 
\[\psi_M\colon M\cong \Hom_R(R, M) \to M_\G\]
For each $R$-module $M$, the homomorphism $\psi_M$ is part of the following exact sequence, where both the kernel and cokernel of the map $\psi_M$ are $\G$-torsion $R$-modules. 
\begin{equation}\label{eq:psi-seq}
0 \to t_\G(M) \to M \overset{\psi_M}\to M_\G \to M_\G/\psi_M(M) \to 0
\end{equation}
By substituting $M=R$, the assignment gives a ring homomorphism $\psi_R\colon R \to R_\G$ and furthermore, for each $R$-module $M$ the module $M_\G$ is both an $R$-module and an $R_\G$-module.

Let $q\colon \ModR \to \ModR_\G$ denote the functor that maps each $M$ to its module of quotients $M_\G$. Then the $\psi$ can be considered a natural transformation of endofunctors of $\ModR$, that is the following diagram commutes.
 \begin{equation}
  \xymatrix{
 M \ar[r]^f \ar[d]^{\psi_M}& N \ar[d]^{\psi_N} \\
 M_\G  \ar[r]^{f_\G}& N_\G}
\end{equation}

A right $R$-module $M$ is \emph{$\G$-closed} if the natural homomorphism
\[M \cong \Hom_R(R, M) \to \Hom_R (J, M)\] is an isomorphism for each $J \in \G$. This amounts to saying that $\Hom_R(R/J,M) =0$ for every $J \in \G$ (i.e. $M$ is \emph{$\G$-torsion-free}) and $\Ext^1_R(R/J,M) =0$ for every $J \in \G$ (i.e. $M$ is \emph{$\G$-injective}). Moreover, if $M$ is $\G$-closed then $M$ is isomorphic to its module of quotients $M_\G$ via $\psi_M$. Conversely, every $R$-module of the form $M_\G$ is $\G$-closed. The $\G$-closed modules form a full subcategory of both $\ModR$ and $\ModR_\G$. Additionally, every $R$-linear morphism of $\G$-closed modules is also $R_\G$-linear, 

A left $R$-module $N$ is called \emph{$\G$-divisible} if $JN = N$ for every $J\in \G$. Equivalently, $N$ is $\G$-divisible if and only if $R/J \otimes_R N =0$ for each $J \in \G$. We denote the class of $\G$-divisible modules by $\D_\G$. It is straightforward to see that $\D_\G$ is a torsion class in $\RMod$.

A right Gabriel topology is \emph{faithful} if $\Hom_R(R/J, R) =0$ for every $J \in \G$, or equivalently if $R$ is $\G$-torsion-free, that is the natural map $\psi_R\colon R \to R_\G$ is injective. 

A right Gabriel topology is \emph{finitely generated} if it has a basis consisting of finitely generated right ideals. Equivalently, $\G$ is finitely generated if the $\G$-torsion radical preserves direct limits (that is there is a natural isomorphism $t_\G(\varinjlim_i M_i) \cong \varinjlim_i (t_\G(M_i))$) if and only if the $\G$-torsion-free modules are closed under direct limits (that is, the associated torsion pair is of finite type). The first of these two equivalences was shown in \cite[Proposition XIII.1.2]{Ste75}, while the second was noted by Hrbek in the discussion before \cite[Lemma 2.4]{H}.

\subsection{Perfect localisations}\label{SS:perf-loc-prelim}
There is a special class of right Gabriel topologies, called perfect right Gabriel topologies, which behave particularly well and are related to ring epimorphisms. The standard examples of these Gabriel topologies over $R$ are localisations of $R$ with respect to a multiplicative subset. 

We note that the adjective ``perfect'' for a Gabriel topology can be slightly confusing as it is not related in any way to perfect rings. However, we will continue to use this nomenclature as it is already commonly used in the literature. 

We must begin with some definitions.

A \emph{ring epimorphism} is a ring homomorphism $R \overset{u}\to U$ such that $u$ is an epimorphism in the category of unital rings. This is equivalent to the natural map $ U \otimes_R U \to U$ induced by the multiplication in $U$ being an isomorphism as $R$-$R$-bimodules (see \cite[Section XI.1]{Ste75}. We note that if $R$ is commutative and $u\colon R \to U$ a ring epimorphism, then also $U$ is commutative by \cite[Corollary 1.2]{Sil67}. 

Two ring epimorphisms $R \overset{u}\to U$ and $R \overset{u'}\to U'$ are equivalent if there is a ring isomorphism $\sigma\colon U\to U'$ such that $\sigma u=u'$.

A ring epimorphism is called \emph{(left) flat} if $u$ makes $U$ into a flat left $R$-module. 
We will denote the cokernel of $u$ by $K$ and sometimes by $U/R$ or $U/u(R)$.

A left flat ring epimorphism $R \overset{u}\to U$ is called a \emph{perfect right localisation} of $R$. In this case, by \cite[Theorem XI.2.1]{Ste75} the family of right ideals
\[
\G = \{ J \leq R \mid J U = U \} 
\]
forms a right Gabriel topology. Moreover, there is a ring isomorphism $\sigma:U \to R_\G$ such that $\sigma u: R \to R_\G$ is the canonical isomorphism $\psi_R: R \to R_\G$, or, in other words, $u$ and $\psi_R$ are equivalent ring epimorphisms. Note also that a right ideal $J$ of $R$ is in $\G$ if and only if $R/J \otimes_R U =0$.

We will make use of the characterisations of perfect right localisations from Proposition XI.3.4 of Stenstr\"om's book \cite{Ste75}. 

\subsection{Gabriel topologies and $1$-tilting classes}
As mentioned before, our work relies on a characterisation of $1$-tilting cotorsion pairs over commutative rings. Specifically, in \cite{H}, Hrbek showed that over commutative rings the faithful finitely generated Gabriel topologies are in bijective correspondence with $1$-tilting classes, and that the latter are exactly the classes of $\G$-divisible modules of the associated faithful finitely generated Gabriel topology $\G$.

The following theorem is an indispensable starting point for this paper. 
\begin{thm} \cite[Theorem 3.16]{H} \label{T:Hrb-tilting}
Let $R$ be a commutative ring. There are bijections between the following collections.
\begin{enumerate}
\item[(i)] $1$-tilting classes $\T$ in $\ModR$. 
\item[(ii)] Faithful finitely generated Gabriel topologies $\G$ on $R$.
\item[(iii)] Faithful hereditary torsion pairs $(\E,\F)$ of finite type in $\ModR$.
\end{enumerate}
Moreover, the tilting class $\T$ is the class of $\G$-divisible modules with respect to the associated Gabriel topology $\G$ of $\T$.
\end{thm}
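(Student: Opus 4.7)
The plan is to establish the bijections sequentially, with the correspondence (ii)$\leftrightarrow$(iii) being essentially classical and the main effort devoted to (ii)$\leftrightarrow$(i). For (ii)$\leftrightarrow$(iii), Stenstr\"om's bijection \cite[Theorem VI.5.1]{Ste75} between right Gabriel topologies on $R$ and hereditary torsion classes in $\ModR$ matches up the underlying data; under it, $\G$ is faithful precisely when $R \in \F_\G$ (the standard notion of faithfulness for a torsion pair), and $\G$ is finitely generated exactly when $(\E_\G,\F_\G)$ is of finite type, by \cite[Proposition XIII.1.2]{Ste75} as already recalled in Section~\ref{S:gab}. So this part is essentially a restatement.

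For (ii)$\to$(i), given a faithful finitely generated Gabriel topology $\G$, the goal is to exhibit $\D_\G$ as a $1$-tilting class. Closure of $\D_\G$ under extensions, direct sums, and epimorphic images is immediate from the defining condition $JM = M$. The substantive step is to produce a set $\mathcal{S}$ of modules of projective dimension at most one with $\mathcal{S}^{\perp_1} = \D_\G$; the natural candidates come from two-term projective presentations $R^n \to R$ with matrix $(a_1,\dots,a_n)$ associated to finitely generated ideals $J = (a_1,\dots,a_n) \in \G$, the $\Ext^1$-vanishing condition against such modules being designed to encode $JM = M$. Applying Eklof--Trlifaj to such a set then yields a complete cotorsion pair whose left-hand class lies in $\clP_1$ and whose right-hand class is $\D_\G$; a $1$-tilting module associated to $\D_\G$ is extracted from a special $\D_\G$-preenvelope $0 \to R \to T_0 \to T_1 \to 0$ of $R$ in the standard manner.

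For (i)$\to$(ii), given a $1$-tilting class $\T$, the set $\G_\T = \{J \leq R : JT = T \text{ for all } T \in \T\}$ is trivially a filter; the Gabriel axiom reduces to the closure properties of $\T$ as a torsion class, and faithfulness follows from (T3), which embeds $R$ into a $\G_\T$-divisible module and so forbids nonzero $\G_\T$-torsion in $R$. The most delicate step is that $\G_\T$ is finitely generated: for this I would invoke the finite type theorem for $1$-tilting cotorsion pairs \cite{BH08}, producing $\mathcal{S} \subseteq \clP_1 \cap \modr R$ with $\mathcal{S}^{\perp_1} = \T$; translating $\Ext^1$-vanishing against such $\mathcal{S}$ into the divisibility condition on $\T$ then yields the required basis of finitely generated ideals. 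Mutual inversion $\T = \D_{\G_\T}$ and $\G = \G_{\D_\G}$ follows by unwinding the definitions and comparing the two sets of test ideals. I expect the main obstacle to be the (ii)$\to$(i) direction, where ensuring that ${}^{\perp_1}\D_\G \subseteq \clP_1$ and that the generated cotorsion pair is indeed tilting (not merely complete) requires careful use of the Gabriel axiom together with the finite generation of $\G$ to connect tensor-theoretic divisibility with $\Ext^1$-theoretic orthogonality.
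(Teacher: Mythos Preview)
The paper does not give its own proof of this statement: Theorem~\ref{T:Hrb-tilting} is simply quoted from \cite[Theorem 3.16]{H} with no argument supplied, as it serves purely as background input to the paper's main results. So there is no in-paper proof to compare your proposal against.

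That said, your sketch is broadly aligned with Hrbek's actual argument in \cite{H}. A few remarks on where your outline is vague or slightly off. In the direction (ii)$\to$(i), the ``two-term projective presentation $R^n \to R$ with matrix $(a_1,\dots,a_n)$'' you describe is really the \emph{transpose} $\Tr(R/J)$, and the key identity you need is $\Ext^1_R(\Tr(R/J),M) \cong M/JM$ (this is precisely \cite[Lemma 3.3]{H}, also invoked later in the present paper in the proof of Lemma~\ref{L:fdimRg-0}); faithfulness of $\G$ is what guarantees that the map $R \to R^n$ is injective and hence that $\pdim \Tr(R/J) \leq 1$. Your worry that ensuring ${}^{\perp_1}\D_\G \subseteq \clP_1$ might be delicate is justified: this is where the real work lies in Hrbek's paper, and it does not follow merely from Eklof--Trlifaj plus the observation that the generating set sits in $\clP_1$; one needs a separate argument (via Kaplansky/small-object type considerations over commutative rings) to control the left-hand class. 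In the direction (i)$\to$(ii), your plan to go through the finite type theorem \cite{BH08} is exactly the route taken. The mutual-inverse verification is indeed routine once both assignments are set up.
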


When we refer to the \emph{Gabriel topology associated to a $1$-tilting class $\T$} we will always mean the Gabriel topology in the sense of the above theorem. We will denote by $\D_\G$ the $1$-tilting class associated to $\G$ and by $\A$ the left $\Ext$-orthogonal class to $\D_\G$ so $(\A, \D_\G)$ will denote the $1$-tilting cotorsion pair associated to $\G$.

Moreover, in the case of a Gabriel topology that arises from a perfect localisation such that $\pdim R_\G \leq 1$, it is possible to describe the $1$-tilting class more explicitly as seen in the following proposition. 
This observation is crucial as the $1$-tilting module $R_\G \oplus R_\G/R$ is much more convenient to work with than the $1$-tilting class $\D_\G$.

\begin{prop}\cite[Proposition 5.4]{H}\label{P:Hrb-5.4}
Let R be a commutative ring, $T$ a $1$-tilting module, and $\G$ the Gabriel topology associated to the $1$-tilting class $\D_\G = T^\perp$ in the sense of Theorem~\ref{T:Hrb-tilting}. Then the following are equivalent.
\begin{enumerate}
\item[(i)] $\G$ is a perfect Gabriel topology and $\pdim R_\G \leq 1$.
\item[(ii)] $R_\G \oplus R_\G/R$ is a $1$-tilting module for $\D_\G$.
\item[(iii)] $\Gen (R_\G) = \D_\G$
\end{enumerate}
If the above equivalent conditions hold, $T$ or the $1$-tilting class $\D_\G$ is said to arise from a perfect localisation.
\end{prop}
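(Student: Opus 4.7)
The plan is to prove the cycle (i) $\Rightarrow$ (ii) $\Rightarrow$ (iii) $\Rightarrow$ (i).

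For (i) $\Rightarrow$ (ii): Under (i), $\psi_R\colon R\to R_\G$ is a flat (by perfectness of $\G$) injective (by faithfulness of $\G$) ring epimorphism with $\pdim R_\G\le 1$, which is exactly the setup of the Angeleri H\"ugel--S\'anchez construction \cite{AS} cited in the introduction; this produces $R_\G\oplus R_\G/R$ as a $1$-tilting $R$-module whose associated class is $\Gen(R_\G\oplus R_\G/R) = \Gen R_\G$ (the equality holding because $R_\G/R$ is a quotient of $R_\G$). To identify this class with $\D_\G$ I would invoke Hrbek's bijection (Theorem~\ref{T:Hrb-tilting}): the Gabriel topology assigned to $\Gen R_\G$ consists of the ideals $J\le R$ with $JM=M$ for every $M\in\Gen R_\G$, which, since $R_\G$ generates the class, reduces to $\{J\mid JR_\G=R_\G\}$; but this is exactly $\G$ by the defining property of a perfect Gabriel topology.

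The implication (ii) $\Rightarrow$ (iii) is immediate: (ii) gives $\D_\G=(R_\G\oplus R_\G/R)^{\perp_1}=\Gen(R_\G\oplus R_\G/R)=\Gen R_\G$, again using that $R_\G/R$ is a quotient of $R_\G$.

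For (iii) $\Rightarrow$ (i): Having $R_\G\in\Gen R_\G=\D_\G$ says that $R_\G$ is $\G$-divisible, and by \cite[Proposition XI.3.4]{Ste75} this is equivalent to $\G$ being a perfect Gabriel topology (so $\psi_R$ is a flat ring epimorphism). To bound $\pdim R_\G\le 1$ I would first record that for any $1$-tilting cotorsion pair $(\A,\D_\G)$ the left-hand class satisfies $\A\subseteq\clP_1$: indeed $\D_\G$ is a torsion class containing all injective modules (as a right Ext-perpendicular class), hence is coresolving, and dimension-shifting along an injective resolution of any module $X$ forces $\Ext^{\ge 2}_R(M,X)=0$ for every $M\in\A={}^{\perp_1}\D_\G$. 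It then suffices to show $R_\G\in\A$, i.e.\ $\Ext^1_R(R_\G,N)=0$ for every $N\in\D_\G$. For $N$ carrying an $R_\G$-module structure this follows directly from the Ext formula~(\ref{eq:water}) applied with $F=G=R_\G$ and $S=R_\G$: the flatness of $R_\G$ over $R$ yields $\Tor^R_1(R_\G,R_\G)=0$, and the ring-epimorphism identity $R_\G\otimes_R R_\G\cong R_\G$ gives $\Ext^1_R(R_\G,N)\hookrightarrow\Ext^1_{R_\G}(R_\G,N)=0$. Extending this to a general $N\in\D_\G=\Gen R_\G$ is where the perfectness of $\G$ and the finitely generated condition on $\G$ together become essential: one analyses the canonical surjection $N\twoheadrightarrow R_\G\otimes_R N$ (surjective because $R_\G/R\otimes_R N=0$ for $N\in\D_\G$) and controls its kernel $\Tor^R_1(R_\G/R,N)$ by writing $R_\G/R$ as a directed union of cyclic $\G$-torsion modules $R/J$ with $J\in\G$ finitely generated.

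The main obstacle is precisely this last reduction for a $\G$-divisible $N$ that does not already carry an $R_\G$-module structure, where formula (\ref{eq:water}) does not apply directly; resolving it relies on combining the perfectness of $\G$ (to turn the $\G$-torsion-free quotient into an $R_\G$-module) with the finite generation of $\G$ (to manage the $\G$-torsion part through cyclic pieces).
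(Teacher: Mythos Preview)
This proposition is quoted from Hrbek's paper \cite{H} and is not proved in the present paper, so there is no proof here to compare your attempt against. I will therefore assess your argument on its own.

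Your implications (i)$\Rightarrow$(ii) and (ii)$\Rightarrow$(iii) are sound. For (i)$\Rightarrow$(ii), the identification of the two Gabriel topologies via Theorem~\ref{T:Hrb-tilting} is correct: the topology attached to $\Gen R_\G$ is indeed $\{J\mid JR_\G=R_\G\}$, and this equals $\G$ precisely because $\G$ is perfect.

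The gap is in (iii)$\Rightarrow$(i), in the step $\pdim R_\G\le 1$. Your reduction to the torsion part via $0\to t(N)\to N\to R_\G\otimes_R N\to 0$ is valid (and one can even check that $t(N)\in\D_\G$ here), but the proposed control of $t(N)$ through a directed union of $R/J$-modules does not close the argument: while $\Ext^1_R(R_\G,M)=0$ for any $R/J$-module $M$ (by the change-of-rings formula with $R_\G\otimes_RR/J=0$), the functor $\Ext^1_R(R_\G,-)$ does not in general commute with direct limits in the second variable, so passing to $t(N)=\varinjlim t(N)[J]$ does not yield $\Ext^1_R(R_\G,t(N))=0$. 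Your own closing paragraph already flags this as ``the main obstacle,'' and the sketch given there does not remove it.

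A clean way to finish (iii)$\Rightarrow$(i) avoids this limit issue entirely. Take a special $\A$-precover $0\to D\to A\to R_\G\to 0$; since $R_\G\in\D_\G$ and $\D_\G$ is closed under extensions, $A\in\A\cap\D_\G=\Add(T)\subseteq\Gen R_\G$. Choose an epimorphism $\pi\colon R_\G^{(\beta)}\to A$; the composite $R_\G^{(\beta)}\to A\to R_\G$ is an $R$-linear surjection between $R_\G$-modules, hence $R_\G$-linear (as $R\to R_\G$ is a ring epimorphism), and therefore split in $\Mod R_\G$. Any splitting composed with $\pi$ shows that $A\to R_\G$ splits, so $R_\G$ is a summand of $A\in\A\subseteq\clP_1$, giving $\pdim R_\G\le 1$. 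This is essentially the mechanism behind Proposition~\ref{P:R_G-tilting} in the paper, but here one uses $A\in\Gen R_\G$ (coming from hypothesis (iii)) in place of the minimality of a cover.
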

We note that there is yet more confusion with our terminology. That is the $1$-tilting class arises from a perfect localisation if and only if the Gabriel topology arises from a perfect localisation {\em and} $\pdim R_\G \leq 1$. Therefore we often include the statement $\pdim R_\G \leq 1$ for clarity.

\subsection{More properties of Gabriel topologies}\label{SS:gab-prop}
We refer to \cite{BLG} for more properties of right Gabriel topologies. Many of them hold in the non-commutative case.
 In particular we will use the following results.
 \begin{lem}\cite[Lemma 4.1, Lemma 4.2]{BLG}\label{L:G-top-facts}
Suppose $\G$ is a right Gabriel topology. Then the following statements hold. 
\begin{enumerate}
\item[(i)] Suppose $R$ is commutative. If an $R$-module $D$ is both $\G$-divisible and $\G$-torsion-free, then $D$ is an $R_\G$-module and $D \cong D\otimes_R R_\G$ via the natural map $\id_D \otimes_R \psi_R\colon D \otimes_R R \to D \otimes_R R_\G$.
\item[(ii)] If $\pdim_R M\leq 1$, then $\Tor^R_1(M, R_\G)=0$.
\end{enumerate}
\end{lem}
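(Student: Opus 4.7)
The plan for part (i) is to endow $D$ with an $R_\G$-module structure and then show the natural map $d \mapsto d \otimes 1$ is an isomorphism using this structure. First I would check that $\G$-torsion-freeness forces $t_\G(R) \cdot D = 0$: if $a \in t_\G(R)$ is annihilated by some $J \in \G$, then for any $d \in D$ commutativity gives $(ad)J = 0$, hence $ad \in D[J] = 0$, so $D$ is naturally an $R/t_\G(R)$-module. To promote this to an $R_\G$-action, given $x \in R_\G$ represented by an $R$-linear $f\colon J \to R/t_\G(R)$ with $J \in \G$, I use $\G$-divisibility to write $d = \sum_i j_i d_i$ (with $j_i \in J$, $d_i \in D$) and set $x \cdot d := \sum_i f(j_i) d_i$, viewing $f(j_i) \in R/t_\G(R)$ as acting on $D$ via the structure just obtained. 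Well-definedness---independence of the decomposition and of the representative $f$---is secured by the uniqueness, guaranteed by $\G$-torsion-freeness, of an element $y \in D$ satisfying $jy = f(j)d$ for every $j \in J$. With this $R_\G$-action in hand, the map $D \otimes_R R_\G \to D$ given by $d \otimes x \mapsto x \cdot d$ is a left inverse to $d \mapsto d \otimes 1$, yielding injectivity. For surjectivity, I would rewrite a simple tensor using the same decomposition: the characteristic relation $j_i \cdot x = f(j_i) \in R/t_\G(R) \hookrightarrow R_\G$ together with $R$-bilinearity yields $d \otimes x = \sum d_i \otimes f(j_i) = \bigl(\sum f(j_i) d_i\bigr) \otimes 1 = (x \cdot d) \otimes 1$.

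For part (ii), the strategy is to show that $\Tor^R_1(M, R_\G)$ is simultaneously $\G$-torsion-free and $\G$-torsion, forcing it to vanish. From a projective resolution $0 \to P_1 \to P_0 \to M \to 0$, the module $\Tor^R_1(M, R_\G)$ embeds in $P_1 \otimes_R R_\G$, a direct summand of a free $R_\G$-module, and therefore $\G$-torsion-free (since $R_\G$ itself is). On the other hand, applying $\Tor^R_\ast(M, -)$ to the short exact sequence $0 \to R/t_\G(R) \to R_\G \to C \to 0$ with $C := R_\G/\psi_R(R)$ a $\G$-torsion module, and noting $\Tor^R_2(M, C) = 0$ by $\pdim M \leq 1$, one sees that $\Tor^R_1(M, R_\G)$ sits between $\Tor^R_1(M, R/t_\G(R))$ and a subobject of $\Tor^R_1(M, C)$. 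Since $\Tor^R_1(M, R/t_\G(R))$ embeds into $M \otimes_R t_\G(R)$ (from $0 \to t_\G(R) \to R \to R/t_\G(R) \to 0$ and flatness of $R$), and since $\Tor^R_i(M, N)$ is $\G$-torsion whenever $N$ is (reducing to finitely generated $\G$-torsion $N$, which is annihilated by some $J \in \G$), both outer terms are $\G$-torsion, so $\Tor^R_1(M, R_\G)$ is $\G$-torsion as well.

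The main obstacle in (i) is establishing that the prescription $x \cdot d = \sum f(j_i) d_i$ does not depend on the decomposition of $d$ or on the representative of $x$; this is dispatched uniformly by the uniqueness of $y$ with $jy = f(j)d$ coming from $\G$-torsion-freeness, which is the most subtle input of the argument. In (ii) the crucial insight is that tensoring with the $\G$-torsion-free module $R_\G$ produces a $\G$-torsion-free target, which, combined with the $\G$-torsion behaviour of $\Tor^R_i(M, -)$ against $\G$-torsion modules, forces the vanishing.
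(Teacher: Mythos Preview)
The paper does not supply a proof of this lemma; it simply records the statement and defers to \cite[Lemma 4.1, Lemma 4.2]{BLG}. So there is no in-paper argument to compare against, and I can only assess correctness. Your argument for both parts is correct.

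For (i), your explicit construction of the $R_\G$-action via representatives $f\colon J\to R/t_\G(R)$ and decompositions $d=\sum_i j_i d_i$ is valid; the well-definedness step is handled correctly by the uniqueness of solutions to $jy=f(j)d$ supplied by $\G$-torsion-freeness, and the surjectivity computation uses precisely the defining relation $j\cdot x=f(j)$ in $R_\G$. An alternative, somewhat quicker route avoids building the action by hand: since $D$ is $\G$-divisible one has $D\otimes_R N=0$ for every $\G$-torsion module $N$ (reduce to finitely generated $N$, which is an $R/J$-module for some $J\in\G$), and since $D$ is $\G$-torsion-free any $\G$-torsion submodule of $D$ vanishes. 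Applying $D\otimes_R-$ to the sequence~(\ref{eq:psiR}) then shows that $\id_D\otimes\psi_R$ is surjective with kernel the image of $\Tor^R_1(D,R_\G/\psi_R(R))$, which is $\G$-torsion and hence zero in $D$. The $R_\G$-module structure is then transported back through the isomorphism. Your approach has the merit of making the action explicit and verifying compatibility directly.

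For (ii), the torsion/torsion-free dichotomy is exactly the right strategy and is executed correctly. One small sharpening: the claim that $\Tor^R_1(M,N)$ is $\G$-torsion whenever $N$ is does not require the reduction to finitely generated $N$; with the resolution $0\to P_1\to P_0\to M\to 0$ one has $\Tor^R_1(M,N)\subseteq P_1\otimes_R N$, and the latter is a direct summand of a direct sum of copies of $N$, hence $\G$-torsion. The same observation covers $M\otimes_R t_\G(R)$ directly.
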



We will often refer to the following exact sequence where $\psi_R$ is the ring of quotients homomorphism discussed in Subsection~\ref{SS:mod-quot-gab}. 
 
\begin{equation}\label{eq:psiR}
0 \to t_\G(R) \to R \overset{\psi_R} \to R_\G \to R_\G/\psi_R(R) \to 0
\end{equation}
We will denote $t_\G(M)$ simply by $t(M)$ and when clear from the context, $\psi$ instead of $\psi_R$.
We add the following result.
\begin{lem}\label{L:ext-tors-closed}
Consider a right Gabriel topology $\G$. Let $M$ be a $\G$-torsion module and $N$ a $\G$-closed module in $\ModR$. Then $\Ext^1_R(M,N)=0$.
\end{lem}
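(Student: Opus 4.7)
The plan is to combine a transfinite filtration argument with the Eklof Lemma. Since $N$ is $\G$-closed, by definition it is $\G$-injective, so $\Ext^1_R(R/J, N) = 0$ for every $J \in \G$. The goal is to promote this vanishing from cyclic $\G$-torsion modules of the form $R/J$ to arbitrary $\G$-torsion modules.

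First I would show that every $\G$-torsion module $M$ admits a continuous filtration by submodules $\{M_\alpha\}_{\alpha \leq \lambda}$ with $M_0 = 0$, $M_\lambda = M$, and each successor quotient $M_{\alpha+1}/M_\alpha$ isomorphic to some $R/J_\alpha$ with $J_\alpha \in \G$. To construct it, well-order the elements of $M$ as $\{m_\alpha\}_{\alpha < \lambda}$, set $M_\alpha = \sum_{\beta < \alpha} m_\beta R$ at successor and limit stages, and observe that $M_{\alpha+1}/M_\alpha$ is cyclic, generated by the image of $m_\alpha$. Its annihilator contains $\Ann_R(m_\alpha)$, which lies in $\G$ because $M$ is $\G$-torsion. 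Since $\G$ is a filter on right ideals, the (possibly larger) annihilator of the image of $m_\alpha$ in $M_{\alpha+1}/M_\alpha$ is also in $\G$, and hence $M_{\alpha+1}/M_\alpha \cong R/J_\alpha$ for some $J_\alpha \in \G$.

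Having displayed $M$ as an element of $\Filt\{R/J \mid J \in \G\}$, I would then invoke the Eklof Lemma (see, e.g., \cite[Lemma 6.2]{GT12}), which states that for any fixed module $N$, the class $^{\perp_1}\{N\}$ is closed under transfinite extensions. Combined with the fact that each $R/J_\alpha$ lies in $^{\perp_1}\{N\}$ by $\G$-injectivity of $N$, this yields $\Ext^1_R(M,N) = 0$, completing the proof.

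There is no genuine obstacle here: the argument is a standard use of cyclic presentation of torsion modules combined with Eklof's Lemma. The only mild point to check is that $\Ann_R(m + M_\alpha)$ remains in $\G$ (which is automatic since Gabriel topologies are upward closed under inclusion of right ideals).
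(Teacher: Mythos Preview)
Your proof is correct, but it takes a different route from the paper's. The paper avoids the Eklof Lemma entirely: it writes down a single short exact sequence
\[
0 \to H \to \bigoplus_{\alpha} R/J_\alpha \to M \to 0
\]
with $H$ $\G$-torsion (hereditariness of the torsion class), applies $\Hom_R(-,N)$, and observes that the relevant piece of the long exact sequence is flanked by $\Hom_R(H,N)=0$ (since $N$ is $\G$-torsion-free) and $\Ext^1_R(\bigoplus R/J_\alpha, N)=0$ (since $N$ is $\G$-injective). So the paper exploits \emph{both} halves of the definition of $\G$-closed, getting a one-step argument with no transfinite induction. Your approach, by contrast, uses only the $\G$-injectivity of $N$ and compensates with the Eklof Lemma and a filtration by cyclics; this is slightly more general in spirit (it would work for any $N$ with $\Ext^1_R(R/J,N)=0$ for all $J\in\G$, without assuming torsion-freeness) but invokes heavier machinery than the paper needs.
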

\begin{proof}
Let $\G$ be a Gabriel topology of right ideals and $\E$ its associated hereditary torsion class in $\ModR$ which is generated by the cyclic modules $R/J$ where $J \in \G$. Therefore, for $M$ a $\G$-torsion module, there exists a presentation of $M$ as follows. 
\begin{equation}\label{E:bread}
0 \to H \to \bigoplus_{J_\alpha\in \G} R/J_\alpha \to M \to 0
\end{equation}
The module $H$ is $\G$-torsion since $\E$ is a hereditary torsion class. Take a $\G$-closed module $N$ and apply the functor $\Hom_R(-, N)$ to (\ref{E:bread}).
\begin{equation}\label{E:tea}
0= \Hom_R(H, N) \to \Ext^1_R(M, N) \to \Ext^1_R(\bigoplus R/J_\alpha, N)=0
\end{equation}
The first abelian group of the sequence (\ref{E:tea}) vanishes since $H$ is $\G$-torsion and the last abelian group vanishes since direct sums commute with $\Ext^i_R(-, N)$ and $\Ext^1_R(R/J_\alpha,N)=0$ for every $J_\alpha \in \G$. Therefore $\Ext^1_R(M, N)=0$ as desired.
\end{proof}

The following lemma is taken from \cite[Exercise IX.1.4]{Ste75}, although we state the result in a slightly more convenient way for us and include (iii) and (iv). We let $E(M)$ denote the injective envelope of $M$.
\begin{lem} \label{L:stenex}
Let $\G$ be a right Gabriel topology on $R$. Then the following are equivalent.
\begin{enumerate}
\item[(i)] The functor $q\colon \ModR \to \ModR_\G$ which maps each module $M$ to the $\G$-closed module $M_\G$ is exact.
\item[(ii)] The module $E(M)/M$ is $\G$-closed for every $\G$-closed module $M$.
\item[(iii)] For every $\G$-closed module $M$ and each $J \in \G$, $\Ext^2_R(R/J,M)=0$. 
\item[(iv)] For every $\G$-closed module $M$ and basis element $J \in \G$, $\Ext^2_R(R/J,M)=0$. 
\end{enumerate}
\end{lem}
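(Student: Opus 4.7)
The plan is to prove the chain of equivalences (i)~$\Leftrightarrow$~(ii), (ii)~$\Leftrightarrow$~(iii), and (iii)~$\Leftrightarrow$~(iv). The first of these is essentially the content of \cite[Exercise IX.1.4]{Ste75}, while the other two reduce to direct $\Ext$-computations using tools already developed in this section.

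For (ii)~$\Leftrightarrow$~(iii), start with a $\G$-closed module $M$. Since the torsion pair $(\E_\G,\F_\G)$ is hereditary, $\F_\G$ is closed under injective envelopes, hence $E(M)$ is $\G$-torsion-free; being injective, it is automatically $\G$-injective and therefore $\G$-closed. Apply $\Hom_R(R/J,-)$ to the short exact sequence $0 \to M \to E(M) \to E(M)/M \to 0$ for each $J \in \G$. The vanishings $\Hom_R(R/J,E(M))=0$ (by $\G$-torsion-freeness of $E(M)$) and $\Ext^i_R(R/J,E(M))=0$ for $i \geq 1$ (by injectivity), together with the $\G$-closedness of $M$, yield via the long exact sequence the identifications $\Hom_R(R/J,E(M)/M) \cong \Ext^1_R(R/J,M)=0$ and $\Ext^1_R(R/J,E(M)/M) \cong \Ext^2_R(R/J,M)$. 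The first shows that $E(M)/M$ is automatically $\G$-torsion-free, so the $\G$-closedness of $E(M)/M$ is equivalent to $\Ext^1_R(R/J,E(M)/M)=0$ for every $J\in\G$, which by the second identification is precisely condition (iii).

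For (iii)~$\Leftrightarrow$~(iv), the implication (iii)~$\Rightarrow$~(iv) is trivial. Conversely, given $J \in \G$, choose a basis element $J_0 \in \G$ with $J_0 \subseteq J$. Applying $\Hom_R(-,M)$ to $0 \to J/J_0 \to R/J_0 \to R/J \to 0$ produces the exact fragment
\[
\Ext^1_R(J/J_0,M) \to \Ext^2_R(R/J,M) \to \Ext^2_R(R/J_0,M).
\]
Since $R/J_0$ is $\G$-torsion and $\E_\G$ is closed under submodules, $J/J_0$ is $\G$-torsion, so $\Ext^1_R(J/J_0,M)=0$ by Lemma~\ref{L:ext-tors-closed}; the right-hand term vanishes by (iv); hence $\Ext^2_R(R/J,M)=0$.

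For (i)~$\Leftrightarrow$~(ii), the direction (i)~$\Rightarrow$~(ii) is immediate: applying the exact functor $q$ to the sequence $0 \to M \to E(M) \to E(M)/M \to 0$ (where $M$, and hence $E(M)$, are $\G$-closed and therefore fixed by $q$) and invoking naturality of $\psi$ with the five-lemma identifies $E(M)/M$ with $(E(M)/M)_\G$. The converse (ii)~$\Rightarrow$~(i) is the main obstacle and the less transparent half of the Stenstr\"om exercise: an a priori pointwise condition on $\G$-closed modules must be upgraded to exactness of $q$ on \emph{arbitrary} short exact sequences. The standard route is to pass through (iii) and show that $R^1 q$ vanishes; for a $\G$-closed $M$, dimension-shifting along $0 \to J \to R \to R/J \to 0$ and using $\G$-closedness to kill $\Hom_R(R/J,M)$ and $\Ext^1_R(R/J,M)$ identifies $R^1 q(M)$ with a direct limit of $\Ext^2_R(R/J,M)$ over $J \in \G$, which vanishes by (iii). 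Extending this vanishing from $\G$-closed modules to arbitrary $R$-modules, via the torsion radical $t_\G$ and the reflection $M \mapsto M/t_\G(M)$, is the delicate bookkeeping step.
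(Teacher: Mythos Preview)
Your treatment of (ii)$\Leftrightarrow$(iii), (iii)$\Leftrightarrow$(iv), and (i)$\Rightarrow$(ii) is correct and essentially identical to the paper's.

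The implication (iii)$\Rightarrow$(i) is where you diverge, and where there is a genuine gap. To conclude that $q$ is exact via derived functors you would need $R^1q(L)=0$ for an \emph{arbitrary} module $L$ (the kernel in a general short exact sequence), not merely for $\G$-closed $L$. Your sketch only treats the $\G$-closed case, and the promised ``delicate bookkeeping'' reduction through $t_\G$ and $M\mapsto M/t_\G(M)$ is never carried out. This is not a triviality: $q$ is the composite of $(-)/t_\G(-)$ with $\varinjlim_J\Hom_R(J,-)$, and controlling its derived functors on torsion modules (and on torsion-free modules that are not $\G$-closed) requires a separate argument that you have not supplied.

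The paper's route for (iii)$\Rightarrow$(i) is more elementary and sidesteps derived functors entirely. From (iii) one knows that the $\G$-closed modules are closed under cokernels of monomorphisms, so given $0\to L\overset{f}\to M\overset{g}\to N\to 0$, the object $C:=\Coker(f_\G\colon L_\G\to M_\G)$ is itself $\G$-closed. One then invokes the reflector property of $\psi_N\colon N\to N_\G$, namely \cite[Proposition IX.1.11]{Ste75} (for every $\G$-closed $X$ the map $\psi_N^\ast\colon\Hom_R(N_\G,X)\to\Hom_R(N,X)$ is an isomorphism), to show that the induced map $h\colon N\to C$ factors $\psi_N$ and that the resulting comparison $C\to N_\G$ is an isomorphism; hence $g_\G$ is surjective. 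This argument works directly at the level of objects and needs nothing beyond left exactness of $q$ and the universal property you already have available.
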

\begin{proof}[Sketch of proof]
We will prove (i)$\Rightarrow$(ii)$\Leftrightarrow$(iii)$\Rightarrow$(i) and (iii)$\Leftrightarrow$(iv).
The equivalence of (ii) and (iii) follows from the isomorphism $
\Ext^1_R(R/J,E(M)/M)\cong \Ext^2_R(R/J,M)$.

For the implication (i) $\Rightarrow$ (ii), we begin by assuming that $q$ is exact. Fix a $J\in \G$ and take a $\G$-closed $R$-module $M$. Then since $M$ is essential in its injective envelope $E(M)$, $E(M)$ must be $\G$-torsion-free and thus is $\G$-closed. Thus we have the following commuting diagram, where the exactness of the bottom row follows by our assumption that $q$ is exact.
\[
\xymatrix{
0 \ar[r] & M \ar[r] \ar[d]^{\psi_M}_\cong & E(M) \ar[r] \ar[d]^{\psi_{E(M)}}_\cong& E(M)/M \ar[r]\ar[d]^{\psi_{E(M)/M}} & 0\\
0 \ar[r] & M_\G \ar[r] & E(M)_\G \ar[r]^{} & (E(M)/M)_\G \ar[r] & 0}
\]
 It follows by the snake lemma that $E(M)/M$ is isomorphic to its module of quotients so is $\G$-closed.

Now we show that (iii)$\Rightarrow$(i). Assume that for every $\G$-closed module $M$ and every $J \in \G$, $\Ext^2_R(R/J,M)=0$, therefore it follows that the $\G$-closed modules are closed under cokernels of monomorphisms. Now consider $q$ applied to the exact sequence $0 \to L \overset{f}\to M \overset{g}\to N \to 0$. Recall that $q$ is left exact, so it remains only to show that the induced map $g_\G$ is a surjection. We have the following commuting diagram where the bottom row is also in $\ModR_\G$.
 \begin{equation}\label{E:q-functor-1} 
  \xymatrix{
0 \ar[r] & L \ar[r]^f \ar[d]^{\psi_L} & M \ar[r]^g \ar[d]^{\psi_M}& N \ar[r]\ar[d]^{\psi_N} & 0\\
0 \ar[r] & L_\G \ar[r]^{f_\G} & M_\G  \ar[r]^{g_\G}& N_\G}
\end{equation}
 
It is sufficient to show that $g_\G$ is a surjection. By assumption $\Coker f_\G$ is $\G$-closed, and one uses this to show that $N_\G$ and $\Coker f_\G$ coincide. One uses \cite[Proposition IX.1.11]{Ste75}, that is for any $\G$-closed module $X$, there is an isomorphism $\psi_N^\ast\colon \Hom_R(N_\G, X) \overset{\cong} \to \Hom_R(N,X)$, to show that $\psi_N$ must factor through the unique induced map $h \colon N \to \Coker f_\G$, and by another application of the proposition the induced map $k \colon \Coker f_\G \to N_\G$ is an isomorphism.

That (iii)$\Rightarrow$(iv) is trivial. For the converse, for every ideal $J \in \G$ there exists a basis element $J_0 \in \G$ such that $J_0 \subseteq J$. Thus for $M$ $\G$-closed, one applies $\Hom_R(-, M)$ to $0 \to J/J_0 \to R/J_0 \to R/J \to 0$. As $J/J_0$ is $\G$-torsion, the conclusion follows by applying Lemma~\ref{L:ext-tors-closed}.

\end{proof}
The following lemma will be useful when working with a faithful Gabriel topology over a commutative ring that arises from a perfect localisation.
 \begin{lem}\cite[Lemma 4.5]{BLG} \label{L:finmanyann}
Let $R$ be a commutative ring, $u\colon R \to U$ a flat injective ring epimorphism, and $\G$ the associated Gabriel topology. Then the annihilators of the elements of $U/R$ form a sub-basis for the Gabriel topology $\G$. That is, for every $J\in \G$ there exist $z_1, z_2, \dots , z_n \in U$ such that 
\[
\bigcap_{\substack{
   0 \leq i \leq n}}
   \Ann_R(z_i +R) \subseteq J.\] 
 \end{lem}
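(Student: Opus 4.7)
The plan is to exploit the characterisation of the Gabriel topology associated to a flat injective ring epimorphism recalled in Subsection~\ref{SS:perf-loc-prelim}, namely that $\G=\{J\le R\mid JU=U\}$, and to extract witnessing elements of $U$ directly from a Bezout-type expression of $1$ inside $JU$. The outline has three steps.

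First, I would verify that the annihilators $\Ann_R(z+R)$ of elements $z+R\in K=U/R$ actually belong to $\G$, so that the statement has content: since $u$ is a flat injective ring epimorphism of commutative rings, $U\cong R_\G$ is $\G$-closed, and by the exact sequence~\eqref{eq:psiR} with $t_\G(R)=0$ (faithfulness) the cokernel $K=U/R$ coincides with $R_\G/\psi_R(R)$, which is a $\G$-torsion module. Hence every $z+R\in K$ has $\Ann_R(z+R)\in\G$.

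Next, fix $J\in\G$. By the description of $\G$ recalled above, $JU=U$, so $1\in JU$ and there exist finitely many $a_1,\dots,a_n\in J$ and $u_1,\dots,u_n\in U$ with
\[
1=\sum_{i=1}^{n}a_i u_i.
\]
I would then take $z_i:=u_i$ for $i=1,\dots,n$ as the desired elements of $U$.

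Finally, I would check the required inclusion $\bigcap_{i=1}^{n}\Ann_R(z_i+R)\subseteq J$ by a one-line computation: if $r\in R$ satisfies $r u_i\in R$ for every $i$, say $r u_i = s_i\in R$, then
\[
r=r\cdot 1 = \sum_{i=1}^{n} a_i (r u_i) = \sum_{i=1}^{n} a_i s_i \in J,
\]
since $J$ is an ideal of $R$ containing each $a_i$ and each $s_i$ lies in $R$. This closes the argument. There is no serious obstacle here: once one unwinds the definition of $\G$ via $JU=U$, the result is essentially a rephrasing of the Bezout identity $1=\sum a_i u_i$, and the only mildly delicate point is confirming (via the $\G$-torsion nature of $K$) that the annihilators $\Ann_R(z+R)$ are themselves ideals of $\G$, making ``sub-basis'' the correct terminology.
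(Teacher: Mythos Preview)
Your argument is correct: the key identity $1=\sum a_i u_i$ coming from $JU=U$ immediately yields the desired inclusion $\bigcap_i\Ann_R(u_i+R)\subseteq J$, and your preliminary check that each $\Ann_R(z+R)\in\G$ (because $K=U/R$ is $\G$-torsion) justifies the ``sub-basis'' language. Note that the paper does not actually prove this lemma---it is merely quoted from \cite[Lemma 4.5]{BLG}---so there is no in-paper proof to compare against; your self-contained argument is exactly the sort of elementary verification one would expect.
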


\section{When $\A$ is covering $\G$ arises from a perfect localisation and $R_\G \oplus R_\G/R$ is $1$-tilting}\label{S:cov-Rg-div}
In this section we consider the following setting.
\begin{set}\label{set:A-cov}
Let $R$ be a commutative ring and let $(\A, \D_\G)$ be a $1$-tilting cotorsion pair with associated Gabriel topology $\G$ such that $\A$ is a covering class.
\end{set}

We show first that the Gabriel topology $\G$ arises from a perfect localisation and that $\pdim R_\G\leq 1$ so that $\D_\G = \Gen (R_\G)$, in other words we show that the equivalent conditions of Proposition~\ref{P:Hrb-5.4} hold.


We begin by describing covers of modules annihilated by some $J \in \G$.

\begin{lem} \label{L:tors-cov}
Suppose $R$ is commutative and let $(\A, \D_\G)$ be a $1$-tilting cotorsion pair with associated Gabriel topology $\G$. Consider an $R$-module $M$ such that $MJ=0$ for some finitely generated $J \in \G$ and let the following be an $\A$-cover of $M$.
\[
0 \to B \to A \xrightarrow{\phi} M \to 0
\]
Then both $A$ and $B$ are $\G$-torsion.
\end{lem}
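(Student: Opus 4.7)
First, I want to show $AJ = B$: since $\phi$ is $R$-linear and $MJ = 0$, $\phi(AJ) = \phi(A)J = 0$, so $AJ \subseteq B$; conversely, as the cover $\phi$ lies in the complete cotorsion pair $(\A, \D_\G)$, it is a special precover, so $B \in \D_\G$ is $\G$-divisible and $B = JB \subseteq AJ$. Hence $AJ = B$.

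Next, I will exploit direct sums together with Xu's theorem. The direct sum $\bigoplus_{n \in \bbN}\phi \colon A^{(\bbN)} \to M^{(\bbN)}$ is surjective with kernel $B^{(\bbN)} \in \D_\G$ (as $\D_\G$ is closed under direct sums), so it is a special $\A$-precover; since each summand $\phi$ is a cover, Theorem~\ref{T:Xu-sums-cov}(i) yields that $\bigoplus_n \phi$ is itself an $\A$-cover. For any sequence $(a_n)_{n \geq 1}$ of elements of $J$, the multiplication maps $f_n \colon A \to A$, $a \mapsto a a_n$, satisfy $\Img f_n = A a_n \subseteq AJ = B = \Ker \phi$, so Theorem~\ref{T:Xu-sums-cov}(ii) furnishes, for each $x \in A$, an integer $m$ with $x a_1 a_2 \cdots a_m = 0$.

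To finish, fix $x \in A$ and choose a finite generating set $\{a^{(1)}, \dots, a^{(\ell)}\}$ of $J$. Consider the finitely branching tree of words over these generators, calling a node $(k_1, \dots, k_r)$ \emph{dead} when $x\, a^{(k_1)} \cdots a^{(k_r)} = 0$; descendants of dead nodes are dead. The previous step, applied to generator-valued sequences, shows that every infinite branch hits a dead node, so by K\"onig's lemma the subtree of live nodes is finite; thus there is a uniform $m_0 = m_0(x)$ at which every node is dead, and expanding arbitrary elements of $J$ in these generators and using $R$-bilinearity gives $xJ^{m_0} = 0$. Since $J^{m_0} \in \G$ (a standard consequence of the Gabriel topology axioms), $\Ann(x) \in \G$, and $x$ is $\G$-torsion; as $x$ was arbitrary, $A$ and hence $B \subseteq A$ are $\G$-torsion. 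The main obstacle is this last step, where the finite generation of $J$ is essential to convert sequence-wise vanishing into uniform annihilation by a power of $J$ via K\"onig's lemma.
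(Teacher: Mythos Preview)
Your proof is correct and follows essentially the same strategy as the paper, via Xu's $T$-nilpotency theorem for direct sums of covers (Theorem~\ref{T:Xu-sums-cov}). The only cosmetic difference is in the combinatorial finish: the paper applies the theorem with repeated multiplication by a \emph{single} generator to obtain $x_i^{m_i}a=0$ and then uses pigeonhole to get $J^{tm}a=0$, whereas you allow arbitrary generator sequences and invoke K\"onig's lemma (your preliminary identification $AJ=B$ is correct but unnecessary, since $\Img f_n\subseteq\Ker\phi$ already follows from $MJ=0$).
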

\begin{proof}
We will use the T-nilpotency of direct sums of covers as in Theorem~\ref{T:Xu-sums-cov}~(ii). 
Let $J \in \G$ be finitely generated with a generating set $\{x_1, \dots, x_t\}$ and suppose $M$ has the property that $M J=0$, and let the sequence above be an $\A$-cover of $M$.
For every $n\in \bbN$, let $B_n$, $A_n$, $M_n$ be isomorphic copies of $B$, $A$, $M$, respectively, and $\phi_n$ the homomorphism $\phi \colon A_n \to M_n$.
Consider the following countable direct sum of covers of $M$ which is a cover of $\bigoplus_{n}M_{n}$ by Theorem~\ref{T:Xu-sums-cov}~(i).
\[ 0 \to \bigoplus_{n}
B_{n} \to \bigoplus_{n} A_{n} \xrightarrow{\bigoplus \phi_n} \bigoplus_{n}M_{n} \to 0 .\] 
Choose an element $x \in J$ and for each $n$ set $f_n\colon A_n\to A_{n+1}$ to be the multiplication by $x$.

Then clearly $\phi(f_n(A_n))=0$ for every $n>0$, hence we can apply Theorem~\ref{T:Xu-sums-cov}~(ii). For every $a \in A$, there exists an $m$ such that 
\[ f_m \circ \cdots \circ f_2 \circ f_1 (a) = 0 \in A_{m+1}.\]
Hence for every $a \in A$ there is an integer $m$ for which $x^m a = 0$.

Fix $a \in A$ and let $m_i$ be the minimal natural number for which $(x_i)^{m_i}a=0$ and set $m:= \sup\{m_i\mid 1 \leq i \leq t\}$. Then for a large enough integer $k$ we have that $J^k a =0$ (for example set $k=tm$), and $J^k \in \G$. Thus every element of $A$ is annihilated by an ideal contained in $\G$, therefore $A$ is $\G$-torsion. Since the associated torsion pair of the Gabriel topology is hereditary, also $B$ is $\G$-torsion.
\end{proof}

Next we show that $\G$ must arise from a perfect localisation using an exercise from Stenstr\"om, Lemma~\ref{L:stenex}.

\begin{lem}\label{L:cov-G-perf}
Suppose $R$ is commutative and let $(\A, \D_\G)$ be a $1$-tilting cotorsion pair with associated Gabriel topology $\G$. Suppose $\A$ is covering. Then $\G$ is a perfect Gabriel topology.
\end{lem}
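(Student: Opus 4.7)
The plan is to verify condition (iv) of Lemma~\ref{L:stenex} for $\G$: namely, that $\Ext^2_R(R/J, M) = 0$ for every $\G$-closed $R$-module $M$ and every finitely generated $J \in \G$ (such ideals form a basis of $\G$, since $\G$ is finitely generated). By Lemma~\ref{L:stenex} this produces exactness of the module-of-quotients functor $q$, and Stenstr\"om's Proposition XI.3.4 then identifies this exactness with $\G$ being a perfect Gabriel topology.

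To verify the $\Ext^2$-vanishing, fix a finitely generated $J \in \G$ and use the hypothesis that $\A$ is covering to take an $\A$-cover of $R/J$,
\[
0 \to B \to A \xrightarrow{\phi} R/J \to 0.
\]
By Lemma~\ref{L:tors-cov}, both $A$ and $B$ are $\G$-torsion. For an arbitrary $\G$-closed module $M$, applying $\Hom_R(-, M)$ yields the long exact sequence segment
\[
\Ext^1_R(B, M) \to \Ext^2_R(R/J, M) \to \Ext^2_R(A, M).
\]
The left-hand term vanishes by Lemma~\ref{L:ext-tors-closed}, since $B$ is $\G$-torsion and $M$ is $\G$-closed. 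The right-hand term vanishes since $A \in \A \subseteq \clP_1$; the inclusion $\A \subseteq \clP_1$ holds because any $1$-tilting module $T$ generating $(\A, \D_\G)$ satisfies $\pdim T \leq 1$, so $\Add(T) \subseteq \clP_1$, and the closure of $\clP_1$ under transfinite extensions and direct summands transfers to $\A$. Hence $\Ext^2_R(R/J, M) = 0$, as required.

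The main obstacle is conceptually mild here: the heavy lifting has already been performed in Lemma~\ref{L:tors-cov}, which leveraged the minimality of covers via the $T$-nilpotency statement in Theorem~\ref{T:Xu-sums-cov}(ii) together with the finite generation of $J$. Without either the covering hypothesis or the finite generation of $\G$, one loses control over the $\G$-torsion nature of $A$ and $B$, and the $\Ext^2$-vanishing argument collapses. The remaining bookkeeping point, namely translating condition (iv) of Lemma~\ref{L:stenex} into the statement that $\G$ is perfect in Stenstr\"om's sense, is handled by the equivalent characterisations in \cite[Proposition XI.3.4]{Ste75}.
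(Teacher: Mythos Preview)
Your proof is correct and follows essentially the same route as the paper: both take an $\A$-cover of $R/J$ for finitely generated $J\in\G$, invoke Lemma~\ref{L:tors-cov} to obtain $\G$-torsion $A$ and $B$, and combine Lemma~\ref{L:ext-tors-closed} with $\pdim A\leq 1$ to kill $\Ext^2_R(R/J,M)$ for $\G$-closed $M$, then appeal to Lemma~\ref{L:stenex} and \cite[Proposition XI.3.4]{Ste75}. The only cosmetic point is that Stenstr\"om's criterion needs both exactness of $q$ \emph{and} a finitely generated basis for $\G$; you have already noted the latter in your first sentence, so the argument is complete.
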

\begin{proof}
By \cite[Proposition XI.3.4]{Ste75}, $R_\G$ arises from a perfect localisation if and only if both the functor $q$ is exact and $\G$ has a basis of finitely generated ideals. The associated Gabriel topology, $\G$ of a $1$-tilting class has a basis of finitely generated ideals by Hrbek's characterisation in Theorem~\ref{T:Hrb-tilting}, so it remains only to show that $q$ is exact.

We will show that $\Ext^2_R(R/J, M)=0$ for every $\G$-closed $R$-module $M$ and every finitely generated $J \in \G$, and then apply Lemma~\ref{L:stenex} to conclude that $q$ is exact. 

Let $M$ be any $\G$-closed $R$-module and $J \in \G$ finitely generated, and consider the following $\A$-cover of $R/J$. 
\[
0 \to B_J \to A_J \to R/J \to 0
\]
By Lemma~\ref{L:tors-cov}, $A_J$ and $B_J$ are $\G$-torsion. We apply the contravariant functor $\Hom_R(-, M)$ to the above cover, and find the following exact sequence.
\[
0=\Ext^1_R(B_J, M) \to \Ext^2_R(R/J, M) \to \Ext^2_R(A_J, M)=0
\]
The first module $\Ext^1_R(B_J, M)$ vanishes by Lemma~\ref{L:ext-tors-closed} since $B_J$ is $\G$-torsion and $M$ is $\G$-closed. The last module $\Ext^2_R(A_J, M)$ vanishes since $\pdim A_J \leq 1$. Therefore $\Ext^2_R(R/J, M)=0$ for every $M$ $\G$-closed and every finitely generated $J \in \G$, as required.
\end{proof}

The above lemma allows us to use the equivalent conditions of \cite[Proposition XI.3.4]{Ste75}. In particular, we have that $\psi_R\colon R \to R_\G$ is a flat injective ring epimorphism and that $R_\G$ is $\G$-divisible, so $R_\G \in \D_\G$. It remains to see that if $\A$ is covering in $(\A, \D_\G)$ then $R_\G \oplus R_\G/R$ is the associated $1$-tilting module, that is the equivalent conditions of Proposition~\ref{P:Hrb-5.4}. This amounts to showing that $\pdim R_\G \leq 1$.

\begin{prop}\label{P:R_G-tilting}
Suppose $R$ is commutative and let $(\A, \D_\G)$ be a $1$-tilting cotorsion pair with associated Gabriel topology $\G$. Suppose $\A$ is covering, then $\pdim R_\G \leq 1$. In particular, the module $R_\G \oplus R_\G/R$ is a $1$-tilting module associated to the cotorsion pair $(\A, \D_\G)$ and moreover $\Gen(R_\G) = \D_\G$.
\end{prop}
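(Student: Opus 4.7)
The plan is to exhibit $R_\G$ as a direct summand of a module in $\A$, so that $R_\G\in\A\subseteq\clP_1$, and then to invoke Proposition~\ref{P:Hrb-5.4} for the remaining assertions. First I would take an $\A$-cover $0\to B\to A\xrightarrow{\phi} R_\G\to 0$ of $R_\G$, which exists by the covering hypothesis. Since $R\in\A$ is projective, $\phi$ is surjective, and Wakamatsu's lemma places $B$ in $\A^{\perp}=\D_\G$. By Lemma~\ref{L:cov-G-perf}, $\G$ is perfect and $R_\G\in\D_\G$; since $\D_\G$ is closed under extensions, $A\in\D_\G$ as well, so $A\in\A\cap\D_\G=\Add(T)$ for any $1$-tilting module $T$ associated to $\D_\G$.

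The next step is to pass to the tensored sequence. Since $R_\G$ is $R$-flat and $A\in\clP_1$, Lemma~\ref{L:G-top-facts}(ii) gives $\Tor^R_1(A,R_\G)=0$, while $R_\G\otimes_R R_\G\cong R_\G$ by the ring-epimorphism property. Because $R_\G$ is $\G$-torsion-free, $t_\G(A)\subseteq B$, hence $t_\G(A)=t_\G(B)$; the quotients $A/t_\G(A)$ and $B/t_\G(B)$ are $\G$-divisible and $\G$-torsion-free, hence $R_\G$-modules by Lemma~\ref{L:G-top-facts}(i), and the canonical maps identify $A\otimes_R R_\G\cong A/t_\G(A)$ and $B\otimes_R R_\G\cong B/t_\G(B)$. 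The resulting short exact sequence $0\to B/t_\G(B)\to A/t_\G(A)\to R_\G\to 0$ lives in $\ModR_\G$ and splits because $R_\G$ is $R_\G$-projective. Thus $A/t_\G(A)\cong R_\G\oplus B/t_\G(B)$.

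The hard part will be lifting this splitting from $A/t_\G(A)$ back to $A$ itself, equivalently forcing $B=0$ so that $\phi$ becomes an isomorphism. My plan is to combine the minimality criterion of Corollary~\ref{C:Xu-cov-cor} with the $T$-nilpotency technique from Theorem~\ref{T:Xu-sums-cov}(ii) already exploited in Lemma~\ref{L:tors-cov}. Forming a countable coproduct of copies of the cover and constructing endomorphisms between the copies whose images land in $\Ker\phi$ (using the $\G$-divisibility of $A$ and the fact that each element of $R_\G$ lifts along $\phi$ modulo some finitely generated ideal of $\G$), any nonzero direct summand of $A$ sitting inside $B$ should be shown to vanish. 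Corollary~\ref{C:Xu-cov-cor} then forces first $B/t_\G(B)=0$ and afterwards $t_\G(A)=0$, so $\phi$ is an isomorphism and $R_\G\cong A\in\A\subseteq\clP_1$.

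With $\pdim R_\G\leq 1$ now in hand and $\G$ perfect by Lemma~\ref{L:cov-G-perf}, the equivalent conditions of Proposition~\ref{P:Hrb-5.4} are satisfied, yielding that $R_\G\oplus R_\G/R$ is a $1$-tilting module associated to $(\A,\D_\G)$ and that $\Gen(R_\G)=\D_\G$.
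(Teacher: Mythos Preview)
Your setup through the tensored sequence is correct and matches the paper: $\phi$ is an $\A$-cover with $A,B\in\D_\G$, and after killing torsion you get a split sequence $0\to B/t_\G(B)\to A/t_\G(A)\to R_\G\to 0$ in $\ModR_\G$.

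The gap is in your ``hard part''. You invoke Corollary~\ref{C:Xu-cov-cor} to force $B/t_\G(B)=0$ and then $t_\G(A)=0$, but that corollary only says that a cover has no nonzero direct summand of $A$ contained in $B$; it does not say $B=0$. To use it you would have to exhibit $B/t_\G(B)$ (or $t_\G(A)$) as a direct summand of $A$ lying inside $B$, and you have not done this: the splitting you obtained lives in the \emph{quotient} $A/t_\G(A)$, and there is no evident way to lift it to $A$ without already knowing $\Ext^1_R(R_\G,t_\G(A))=0$. Your appeal to the $T$-nilpotency argument of Theorem~\ref{T:Xu-sums-cov}(ii) is also unconvincing as stated: in Lemma~\ref{L:tors-cov} that technique produced \emph{torsion} (multiplication by $x\in J$ lands in $\Ker\phi$ because $MJ=0$), whereas here $R_\G J=R_\G$, so the analogous multiplications do not land in the kernel, and you have not specified which endomorphisms $f_n$ you intend to use.

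The paper bypasses the lifting problem entirely by proving directly that $A$ is $\G$-torsion-free, using the cover minimality once rather than the countable-sum $T$-nilpotency. For a finitely generated $J=(x_1,\dots,x_n)\in\G$, write $1_{R_\G}=\sum x_i\eta_i$ and set $\mathbf{s}_A\colon A^n\to A$, $(a_i)\mapsto\sum x_ia_i$. Lifting the map $a\mapsto(\phi(a)\eta_i)_i$ through the $\A$-precover $\phi^n\colon A^n\to R_\G^n$ gives $f\colon A\to A^n$ with $\phi\,\mathbf{s}_Af=\phi$, so $\mathbf{s}_Af$ is an automorphism of $A$. But $\mathbf{s}_Af$ visibly annihilates $A[J]$, hence $A[J]=0$. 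Thus $t_\G(A)=0$, your tensored sequence \emph{is} the original one, and the $R_\G$-splitting already gives $R_\G\in\A$. (A posteriori $B=0$ by uniqueness of covers, but the paper does not need this.)
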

\begin{proof}
We know that $\G$ is perfect, so that $R_\G$ is $\G$-divisible by Lemma~\ref{L:cov-G-perf}.We prove that  $\pdim R_\G \leq 1$ by showing that $R_\G \in \A$. Let the following be an $\A$-cover of $R_\G$.
\begin{equation}\label{E:rain}
0 \to D \to A \xrightarrow{\phi} R_\G \to 0
\end{equation}
Note that $A$ is $\G$-divisible since both $R_\G$ and $D$ are $\G$-divisible. We will first show that $A$ must be $\G$-torsion-free, and therefore an $R_\G$-module. Fix a finitely generated $J \in \G$ with generators $x_1, \dots, x_n$. We will show that $A[J]=0$, that is the only element of $A$ annihilated by $J$ is $0$. Since $R_\G$ is divisible, one can write $1_R= 1_{R_\G} = \sum x_i \eta_i$ for some $x_i \in J$ and $\eta_i \in R_\G$. Let $\mathbf{x}$, $\mathbf{s}$ and $\mathbf{s}_A$ be the following homomorphisms.
\[ \xymatrix@R=.1cm{
{\mathbf{x}}\colon R_\G \ar[r] & \bigoplus_{1 \leq i \leq n} R_\G & {\mathbf{s}}\colon \bigoplus_{1 \leq i \leq n} R_\G \ar[r] & R_\G\\
\hspace{15pt}1_{R_\G} \ar@{|-_{>}}[r] &(\eta_1, ..., \eta_n) & \hspace{15pt}(\nu_1, ..., \nu_n) \ar@{|-_{>}}[r] & \sum_i x_i\nu_i}
\]
\\
\[ \xymatrix@R=.1cm{
{\mathbf{s}_A}\colon \bigoplus_{1 \leq i \leq n} A \ar[r] & A \\
\hspace{20pt}(a_1, ..., a_n) \ar@{|-_{>}}[r] & \sum_i x_ia_i}
\]
\\
By the definition of $\mathbf{s}$ and $\mathbf{s}_A$, the lower square of (\ref{eq:bells}) commutes. Clearly $\phi^n$ is a precover of $R_\G^n$ as $D^n \in \D_\G$ and $A^n \in \A$ (it is in fact a cover). Therefore, there exists a map $f$ such that the upper square of (\ref{eq:bells}) commutes.
\begin{equation}\label{eq:bells}
 \xymatrix{
A \ar[r]^\phi \ar[d]^{f} & R_\G \ar[r] \ar[d]^{\mathbf{x}} & 0\\
A^n \ar[r]^{\oplus_n \phi} \ar[d]^{\mathbf{s}_A} & {R_\G}^n \ar[r] \ar[d]^{\mathbf{s}} & 0\\
A \ar[r]^\phi & R_\G \ar[r] & 0}
\end{equation}
The map $\mathbf{sx}$ is the identity on $R_\G$, so we have that $ \phi \mathbf{s}_A f= \phi$ and by the $\A$-cover property of $\phi$, $\mathbf{s}_A f$ is an automorphism of $A$. Consider an element $a \in A[J]$ and let $f(a) = (f_1(a), \dots, f_n(a)) \in A^n$. Then $\mathbf{s}_A (f(a)) = \sum x_if_i(a)=\sum f_i(x_ia)=0$ as $x_i \in J$, and by the injectivity of $\mathbf{s}_Af$, $a=0$.

We have shown that $A, D$ are both $\G$-torsion-free and $\G$-divisible, so by Lemma~\ref{L:G-top-facts}~(i) they are $R_\G$-modules. Then the sequence (\ref{E:rain}) is a sequence in $\ModR_\G$ as $R \to R_\G$ is a ring epimorphism and $\ModR_\G \to \ModR$ is fully faithful. Thus (\ref{E:rain}) splits so $R_\G \in \A$ and $\pdim R_\G \leq 1$ as required.

The last statement then follows by Proposition~\ref{P:Hrb-5.4}.
\end{proof}
\section{Topological rings and $u$-contramodules}\label{S:top}
The material covered in this section is a combination of ideas from \cite{Pos}, \cite{BP3}, and \cite{BP4}, and covers mostly methods using contramodules and topological rings.

An abelian group is a \emph{topological group} if it has a topology such that the group operations are continuous. A topological abelian group is said to be \emph{linearly topological} if there is a basis of neighbourhoods of zero consisting of subgroups. 

For a linearly topological abelian group $A$ with basis $\mathfrak{B}$ of subgroups of $A$, there is the following canonical homomorphism of abelian groups.
\[
\lambda_A:A \to \varprojlim_{\substack{V \in \mathfrak{B}}} A/V
\]
When $\lambda_A$ is a monomorphism, or equivalently when $\bigcap_{V \in \mathfrak{B}}V =0$, $A$ is said to be \emph{separated}. When $\lambda_A$ is an epimorphism, $A$ is said to be \emph{complete}. 

For a ring $R$ and $M, N \in \ModR$, the abelian group $\Hom_R(M,N)$ can be considered a linearly topological abelian group as follows. Take a finitely generated submodule $F$ of $M$, and consider the subgroup formed by the elements of $\Hom_R(M,N)$ which annihilate $F$. The subgroups of this form form a base of neighbourhoods of zero in $\Hom_R(M,N)$. Note that this is the same as considering $\Hom_R(M,N)$ with the subspace topology of the product topology on $N^{M}$, where the topology on $N$ is the discrete topology. We will consider $\Hom_R(M,N)$ endowed with this topology which we will call the \emph{finite topology}. The topological abelian group $\Hom_R(M,N)$ is separated and complete with respect to this topology.
\\

Recall from Section~\ref{S:gab} that a topological ring $R$ is \emph{right linearly topological} if it has a topology with a basis of neighbourhoods of zero consisting of right ideals of $R$, and that a ring $R$ with a right Gabriel topology is an example of a right linearly topological ring.

Let $\clH$ be a topology of a linearly topological commutative ring $R$ with basis $\mathfrak{ B}$. The {\em $\clH$-topology} on an $R$-module $M$ is the topology where the base of neighbourhoods of $0$ are the submodules $MJ$ for $J \in \mathfrak{B}$.
For every $R$-module $M$, $\{M/MJ\mid J\in 
\mathfrak B\}$ is an inverse system. 
 The \emph{completion} of $M$ with respect to the $\clH$-topology is the module
\[
\Lambda_{\mathfrak{ B}}(M) := \varprojlim_{\substack{J \in \mathfrak B}} M/MJ.
 \]
There is a canonical map $\lambda_M\colon M \to \Lambda_\mathfrak{B}(M)$ which sends the element $x\in M$ to $(x +MJ)_{J\in \mathfrak{ B}}$. Each element in $\Lambda_{\mathfrak{ B}}(M)$ is of the form $(x_J +MJ)_{J\in \mathfrak{ B}}$ with the relation that for $J \subseteq J'$, $x_J - x_{J'} \in MJ'$. The module $M$ is called \emph{$\clH$-separated} if the homomorphism $\lambda_M$ is injective, which is equivalent to $\bigcap_{J \in \mathfrak{ B}}MJ =0$. The module $M$ is called \emph{$\clH$-complete} if the map $\lambda_M$ is surjective.


Let $R$ be a linearly topological commutative ring with a linear topology $\clH$. A $R$-module $M$ is \emph{$\clH$-discrete} if for every $x \in M$, the annihilator ideal $\Ann_R(x) = \{r \in R\mid xr =0\}$ is open in the topology of $R$. 
In the case that the topology $\clH$ on $R$ is a Gabriel topology $\G$ on $R$, then a module is $\clH$-discrete if and only if it is $\G$-torsion.

Therefore $\psi_R\colon R \to R_\G$ is a flat injective ring epimorphism of commutative rings, which as usual we will denote by $u\colon R \to U$. For every pair of $R$-modules $M$ and $N$, the $R$-modue $\Hom_R(M,N)$ can be endowed both with the finite topology and the $\G$-topology.

For $K:=U/R$ we first show that $\Hom_R(K, M)$ is Hausdorff in the $\G$-topology. 

\begin{lem}\label{L:HomK-H-sep}
Let $R$ be a commutative ring and $\G$ a faithful perfect Gabriel topology on $R$. Then every open basis element  in the finite topology on $\Hom_R(K, M)$  contains $\Hom_R(K, M)J$ for some $J \in \G$. Hence $\Hom_R(K, M)$ is $\G$-separated for every $R$-module $M$.
\end{lem}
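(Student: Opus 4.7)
The plan is to unfold both topologies on $\Hom_R(K,M)$ concretely and then to reduce everything to Lemma~\ref{L:finmanyann}. First I would recall that, by definition of the finite topology, a basic open neighbourhood of $0$ in $\Hom_R(K,M)$ has the form
\[
V_F \;=\; \{\, f \in \Hom_R(K,M) \mid f(F) = 0 \,\},
\]
where $F$ runs over the finitely generated submodules of $K = U/R$. On the other hand, $\Hom_R(K,M)\cdot J$ consists of finite sums $\sum_i f_i j_i$ with $f_i \in \Hom_R(K,M)$ and $j_i \in J$, and because the $f_i$ are $R$-linear one has $(\sum_i f_i j_i)(x) = \sum_i f_i(xj_i)$. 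Hence a sufficient condition for $\Hom_R(K,M)\cdot J \subseteq V_F$ is that $FJ = 0$.

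Next I would produce, for an arbitrary finitely generated $F \subseteq K$, an ideal $J \in \G$ annihilating $F$. Writing $F = \langle z_1 + R, \dots, z_n + R\rangle$ with $z_i \in U$, Lemma~\ref{L:finmanyann} gives $\Ann_R(z_i + R) \in \G$ for each $i$, and since $\G$ is closed under finite intersections (it is a Gabriel topology, hence a filter) the ideal
\[
J \;:=\; \bigcap_{i=1}^n \Ann_R(z_i + R)
\]
lies in $\G$ and annihilates every generator of $F$, hence all of $F$. By the paragraph above, $\Hom_R(K,M)\cdot J \subseteq V_F$, which proves the first assertion.

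For the second assertion I would invoke the fact, recalled in the paragraph of the excerpt introducing the finite topology, that $\Hom_R(K,M)$ is separated in the finite topology, i.e.\ the intersection of all basic open neighbourhoods $V_F$ of $0$ is zero. Since each such $V_F$ contains some $\Hom_R(K,M)\cdot J$ with $J \in \G$, we obtain
\[
\bigcap_{J \in \G} \Hom_R(K,M)\cdot J \;\subseteq\; \bigcap_F V_F \;=\; 0,
\]
which is exactly the statement that $\Hom_R(K,M)$ is $\G$-separated. No step here looks like a serious obstacle; the only thing to watch is that the $R$-action on $\Hom_R(K,M)$ passes inside the arguments, so that $FJ=0$ really does force $\Hom_R(K,M)\cdot J \subseteq V_F$, and that the finite generation of the chosen $F$ is what enables us to use the sub-basis description of $\G$ from Lemma~\ref{L:finmanyann}.
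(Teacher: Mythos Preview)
Your argument is correct and follows the same route as the paper: both show that a finitely generated submodule of $K$ is annihilated by some $J\in\G$, whence $\Hom_R(K,M)J\subseteq V_F$, and then use separatedness in the finite topology. The only cosmetic difference is that where you cite Lemma~\ref{L:finmanyann} to get $\Ann_R(z_i+R)\in\G$, the paper simply uses that $K$ is $\G$-torsion (which is immediate from the general fact that the cokernel of $\psi_R$ is $\G$-torsion); you do not actually need the sub-basis description from Lemma~\ref{L:finmanyann} here.
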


\begin{proof}

Fix a finitely generated submodule $X$ of $K$ and let $V_X$ be the collection of homomorphisms which annihilate $X$. Then as $X$ is a finitely generated submodule and $K$ is $\G$-torsion, there exists a $J \in \G$ such that $XJ =0$. Thus $\Hom_R(K, M)J \subseteq V_X$, as required. 

The last statement follows, since $\Hom_R(K, M)$ is always separated in the finite topology, 
\end{proof}
In particular, we will be interested in the linear topological ring $\mathfrak{R}:=\End_R(K)$ with the finite topology. Later on in Proposition~\ref{P:same-topologies} we will show that the $\G$-topology and the finite topology on $\mathfrak{R}$ coincide.

\subsection{$u$-contramodules}\label{SS:u-contra}

We will begin by considering a general commutative ring epimorphism $u\colon R \to U$ before moving onto flat injective ring epimorphisms.

A module $M$ is \emph{$u$-divisible} if $M$ is an epimorphic image of $U^{(\alpha)}$ for some cardinal $\alpha$. An $R$-module $M$ has a unique $u$-divisible submodule denoted $h_u(M)$, which is the image of the map $u^{\ast}\colon\Hom_R(U,M) \to \Hom_R(R,M) \cong M$. In nice situations, that is when $U$ is flat and $\G$ is the Gabriel topology associated to $u$, the $u$-divisible modules are $\G$-divisible. Later in this section we will discuss when these classes of modules coincide. The following definition is borrowed from \cite{BP4}.
\begin{defn}
Let $u\colon R\to U$ be a ring epimorphism. A \emph{$u$-contramodule} is an $R$-module $M$ such that the following holds. 
\[ \Hom_R(U, M) = 0 = \Ext^1_R(U,M)
\]
\end{defn}
We let $\ucontra$ denote the full subcategory of $u$-contramodules in $\ModR$. By \cite[Proposition 1.1]{GL91} the category of $u$-contramodules is closed under kernels of morphisms, extensions, infinite products and projective limits in $\ModR$ .

 The following lemma is proved in \cite {Pos} for the case of the localisation $R_S$ of $R$ at a multiplicative subset $S$. The proof can be extended easily to the case of a ring epimorphism of commutative rings and will be very useful in the sequel.

 \begin{lem}\label{L:oneten} \cite[Lemma 1.10]{Pos}
 Let $b\colon A \to B$ and $c\colon A \to C$ be two $R$-module homomorphisms such that $C$ is a $u$-contramodule and $\Ker(b)$ is a $u$-divisible $R$-module and $\Coker(b)$ is a $U$-module. Then there exists a unique homomorphism $f\colon B \to C$ such that $c =fb$.
\end{lem}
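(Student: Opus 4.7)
The plan is to factor $b$ through its image, reducing the lifting problem to two consecutive steps: first, to factor $c$ through $\Img b$, and second, to extend the resulting map through the inclusion $\Img b \hookrightarrow B$. Both steps are controlled by $\Hom_R$ and $\Ext^1_R$ vanishing into $C$, and each of the two hypotheses on $b$ will supply exactly the vanishing needed for one of them.

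For the first step, I would show that $\Hom_R(\Ker b, C) = 0$. Since $\Ker b$ is $u$-divisible it is a quotient of some $U^{(\alpha)}$, so $\Hom_R(\Ker b, C)$ embeds into $\Hom_R(U^{(\alpha)}, C) \cong \prod_\alpha \Hom_R(U, C)$, which vanishes by hypothesis on $C$. Consequently the composition $\Ker b \hookrightarrow A \xrightarrow{c} C$ is zero, and $c$ factors uniquely as $c = c' \circ \pi$, with $\pi\colon A \twoheadrightarrow \Img b$ the canonical projection and $c'\colon \Img b \to C$ the induced map.

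For the second step, I want to extend $c'$ along $\Img b \hookrightarrow B$. Applying $\Hom_R(-, C)$ to the short exact sequence $0 \to \Img b \to B \to \Coker b \to 0$, both the existence and the uniqueness of such an extension $f$ follow at once from $\Hom_R(\Coker b, C) = 0 = \Ext^1_R(\Coker b, C)$. This is the main obstacle, and I would deduce it from the stronger statement that $\Hom_R(N, C) = 0 = \Ext^1_R(N, C)$ for every $U$-module $N$. The plan is to pick a $U$-linear surjection $U^{(J)} \twoheadrightarrow N$; its kernel $K$ is a $U$-submodule of a $U$-module, hence itself a $U$-module. Since a projective resolution $P_\bullet \to U$ yields one $P_\bullet^{(J)} \to U^{(J)}$ and $\Hom_R(-, C)$ converts direct sums into products (while cohomology commutes with products of complexes of abelian groups), the hypothesis $\Hom_R(U, C) = 0 = \Ext^1_R(U, C)$ propagates to $\Hom_R(U^{(J)}, C) = 0 = \Ext^1_R(U^{(J)}, C)$. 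The long exact sequence associated to $0 \to K \to U^{(J)} \to N \to 0$ then yields $\Hom_R(N, C) = 0$ together with an isomorphism $\Hom_R(K, C) \cong \Ext^1_R(N, C)$. A bootstrap application of the $\Hom$-vanishing just proved, applied to the $U$-module $K$, kills $\Hom_R(K, C)$ and finishes the claim. Applying it with $N = \Coker b$ completes the proof.
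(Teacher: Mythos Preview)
Your proof is correct. The paper does not give its own proof of this lemma but simply cites \cite[Lemma 1.10]{Pos}, noting that the argument there (for a localisation $R_S$) extends verbatim to a commutative ring epimorphism; your two-step factorisation through $\Img b$, together with the bootstrap showing $\Hom_R(N,C)=0=\Ext^1_R(N,C)$ for every $U$-module $N$, is exactly that standard argument.
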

%

Let now $0 \to R \overset{u} \to U $ be a flat injective ring epimorphism of commutative rings where $U = R_\G$, $K = R_\G/R$ and $\G$ is the associated Gabriel topology $\{J \leq R \mid JU = U \}$. We will often refer to the short exact sequence
\begin{equation}\label{E:u}
0 \to R \overset{u}\to U \overset{w}\to K \to 0
\end{equation}

In general we will use $N$ to denote a $\G$-torsion-free module $R$-module, while $M$ will denote an arbitrary $R$-module.

For an $R$-module $M$, by applying the contravariant functor $\Hom_R(-,M)$ to the short exact sequence (\ref{E:u}) we have the following short exact sequences.
\begin{equation}\label{eq:1contra}
0 \to \Hom_R(K, M) \to \Hom_R(U,M) \to h_u(M) \to 0 
\end{equation}
\begin{equation} \label{eq:8contra}
0 \to h_u(M) \to M \to M/h_u(M) \to 0
\end{equation}
\begin{equation} \label{eq:2contra}
0 \to M / h_u(M) \to \Ext^1_R(K,M) \to \Ext^1_R(U,M) \to 0
\end{equation}
\emph{For an $R$-module $M$, we let $\Delta_u(M)$ denote the module $\Ext^1_R(K,M)$ and $\delta_M\colon M \to \Delta_u(M)$ the natural connecting map from the exact sequences (\ref{eq:8contra}) and (\ref{eq:2contra}).}

For each $R$-module $M$, let $\nu_M$ be the unit of the adjunction $\adjK$ evaluated at $M$. 
\[\xymatrix@R0.13cm{
\nu_M\colon M \ar[r] & \Hom_R(K, M \otimes_R K)\\
\hspace{15pt} m \ar@{|->}[r] & [m^\ast\colon z+R \to m \otimes_R (z+R)]& z \in U}
\]
For every $\G$-torsion-free $R$-module $N$ we have the exact sequence
\begin{equation}\label{eq:6contra}
0 \to N \to N \otimes_R U \to N \otimes_R K \to 0
\end{equation}
and applying the covariant functor $\Hom_R(K,-)$ to (\ref{eq:6contra}) we obtain the long exact sequence
\begin{equation}\label{eq:3contra}
\Hom_R(K, N \otimes_R U) \to \Hom_R(K,N \otimes_R K) \overset{\mu_N}\to
 \Ext^1_R(K, N) \to 
 \Ext^1_R(K,N \otimes_R U)
\end{equation}
where $\mu_N$ is the connecting homomorphism.
In the next lemmas we show that for a $\G$-torsion-free module $N$, the modules $\Hom_R(K, N \otimes_R K)$ and $\Delta_u(N)$ are isomorphic via the natural connecting homomorphism $\mu_N$, and moreover $\delta_N=\mu_N\nu_N$. 
%
\begin{lem}\label{L:Delta-tf}
Let $u\colon R \to U$ be a flat injective ring epimorphism of commutative rings and let $K:=U/R$. If $N$ is a $\G$-torsion-free $R$-module, then, using the above notation, the connecting morphism $\mu_N\colon \Hom_R(K, N \otimes_R K)\to \Delta_u(N) $ is an isomorphism.

\end{lem}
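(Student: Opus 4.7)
The plan is to read $\mu_N$ out of the long exact sequence \eqref{eq:3contra} and force the two flanking terms, namely $\Hom_R(K, N \otimes_R U)$ and $\Ext^1_R(K, N \otimes_R U)$, to vanish. The key observation is that $N \otimes_R U$ is naturally a $U$-module via its right tensorand, so it is enough to prove the stronger claim that
\[
\Hom_R(K, X) = 0 = \Ext^1_R(K, X)
\]
for every $U$-module $X$. Granting this, specialising to $X = N \otimes_R U$ collapses \eqref{eq:3contra} to the asserted isomorphism $\mu_N \colon \Hom_R(K, N \otimes_R K) \xrightarrow{\sim} \Ext^1_R(K, N) = \Delta_u(N)$.

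To establish the claim, I would apply $\Hom_R(-, X)$ to the short exact sequence \eqref{E:u} and obtain
\[
0 \to \Hom_R(K, X) \to \Hom_R(U, X) \xrightarrow{u^\ast} \Hom_R(R, X) \to \Ext^1_R(K, X) \to \Ext^1_R(U, X) \to 0.
\]
Since $u$ is a ring epimorphism, restriction of scalars from $\Modr{U}$ to $\Modr{R}$ is fully faithful; thus $\Hom_R(U, X) \cong \Hom_U(U, X) = X$, and under the identifications $\Hom_R(U, X) = X = \Hom_R(R, X)$ the map $u^\ast$ is just the identity. This already produces $\Hom_R(K, X) = 0$ and embeds $\Ext^1_R(K, X)$ into $\Ext^1_R(U, X)$. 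Flatness of $u$ gives $\Tor^R_1(U, U) = 0$, so the change-of-rings formula \eqref{eq:Ext-form} applied along $u \colon R \to U$ yields
\[
\Ext^1_R(U, X) \cong \Ext^1_U(U \otimes_R U, X) \cong \Ext^1_U(U, X) = 0,
\]
the middle isomorphism using that $u$ is a ring epimorphism.

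The hypothesis that $N$ is $\G$-torsion-free is needed upstream, to guarantee the left-exactness of \eqref{eq:6contra} and thereby the existence of the long exact sequence \eqref{eq:3contra} in the form stated. I do not foresee a genuine obstacle: the whole argument reduces to the familiar principle that, when $u$ is a flat ring epimorphism, every $U$-module is acyclic for $\Hom_R(U, -)$, the rest being formal sequence-chasing. If one prefers, the same vanishings can be read off Lemma~\ref{L:ext-tors-closed} once one notes that any $U$-module is $\G$-closed (because $u \colon R \to U$ is a perfect localisation), and that $K$ is $\G$-torsion.
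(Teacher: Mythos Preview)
Your proof is correct and follows essentially the same strategy as the paper: both read $\mu_N$ off the long exact sequence \eqref{eq:3contra} and kill the flanking terms $\Hom_R(K,N\otimes_R U)$ and $\Ext^1_R(K,N\otimes_R U)$. The only cosmetic difference is that the paper dispatches $\Ext^1_R(K,N\otimes_R U)$ directly via change of rings on $K$ (using $K\otimes_R U=0$), whereas you first show $\Ext^1_R(U,X)=0$ for any $U$-module $X$ and then deduce the vanishing for $K$ from the long exact sequence; both routes, and the alternative via Lemma~\ref{L:ext-tors-closed} that you mention, appear in the paper.
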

\begin{proof}

The first term in equation (\ref{eq:3contra}) vanishes as $K$ is $\G$-torsion and $N \otimes_RU$ is $\G$-torsion-free. The last term in equation (\ref{eq:3contra}) vanishes since by the flatness of the ring $U$, there is an isomorphism \[\Ext^1_R(K,N \otimes_R U)\cong \Ext^1_U(K\otimes_R U,N \otimes_R U) =0.\]Thus $\Hom_R(K, N \otimes_R K)$ is isomorphic to $\Ext^1_R(K, N)= \Delta_u(N)$ via $\mu_N$.

Alternatively, one can use Lemma~\ref{L:ext-tors-closed} as $K$ is $\G$-torsion and $N \otimes_RU$ is $\G$-closed.
\end{proof}

Before continuing with the goal of proving that $\delta_N=\mu_N\nu_N$, we state a consequence of the Lemma~\ref{L:Delta-tf}. We note that in the reference provided, the statement is more general thus requires a more sophisticated proof, whereas here we choose to provide a simpler proof. 

\begin{lem}\cite[Lemma 2.5(a),(b)]{BP3}\label{L:some-ucontras}
Let $u\colon R \to U$ be a flat injective ring epimorphism and let $K:=U/R$. Then the following hold.
\begin{enumerate}
\item[(i)] $\Hom_R(K,M)$ is a $u$-contramodule for every $R$-module $M$.
\item[(ii)] $\Delta_u(N)$ is a $u$-contramodule for every $\G$-torsion-free $R$-module $N$.
\end{enumerate}
\end{lem}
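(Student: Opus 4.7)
The proof reduces essentially to part (i), because part (ii) follows immediately from it once one invokes Lemma~\ref{L:Delta-tf}: for a $\G$-torsion-free module $N$ one has $\Delta_u(N)\cong \Hom_R(K,N\otimes_R K)$, which is of the form $\Hom_R(K,M)$ with $M=N\otimes_R K$, so (i) applies directly. The work is therefore concentrated in (i).

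For (i), the key preliminary observation is that $U\otimes_R K=0$. Indeed, since $u\colon R\to U$ is a ring epimorphism, the multiplication map $U\otimes_R U\to U$ is an isomorphism of $R$-$R$-bimodules; tensoring the defining sequence
\begin{equation*}
0\to R\to U\to K\to 0
\end{equation*}
with the flat module $U$ yields the short exact sequence $0\to U\otimes_R R\to U\otimes_R U\to U\otimes_R K\to 0$, whose first arrow is the isomorphism just mentioned. Hence $U\otimes_R K=0$.

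With this in hand, the vanishing of $\Hom_R(U,\Hom_R(K,M))$ is immediate from Hom-tensor adjunction: $\Hom_R(U,\Hom_R(K,M))\cong \Hom_R(U\otimes_R K,M)=0$. For the $\Ext^1$-vanishing, I would apply the injection (\ref{eq:water}) from the preliminaries with $S=R$, $F=U$, and $G=K$ regarded as an $R$-$R$-bimodule. Because $U$ is flat we have $\Tor_1^R(U,K)=0$, so (\ref{eq:water}) gives an injection
\begin{equation*}
\Ext^1_R(U,\Hom_R(K,M))\hookrightarrow \Ext^1_R(U\otimes_R K,M)=0,
\end{equation*}
which forces $\Ext^1_R(U,\Hom_R(K,M))=0$. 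Combining both vanishings shows that $\Hom_R(K,M)$ is a $u$-contramodule, which is (i).

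For (ii), given a $\G$-torsion-free $R$-module $N$, Lemma~\ref{L:Delta-tf} provides an isomorphism $\Delta_u(N)\cong \Hom_R(K,N\otimes_R K)$, and then (i) applied to the module $N\otimes_R K$ gives that this is a $u$-contramodule. The only genuine subtlety in the whole argument is the use of flatness of $U$ in two different places, namely to identify $U\otimes_R K$ with $0$ via the epimorphism condition and to ensure $\Tor_1^R(U,K)=0$ so that the formula (\ref{eq:water}) applies; once both are noted the proof is essentially a two-line consequence of adjunction and the Ext-formula.
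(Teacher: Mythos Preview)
Your proof is correct and follows essentially the same route as the paper: tensor-Hom adjunction for the vanishing of $\Hom_R(U,\Hom_R(K,M))$, the injection (\ref{eq:water}) together with flatness of $U$ for the $\Ext^1$-vanishing, and then Lemma~\ref{L:Delta-tf} to deduce (ii) from (i). The only addition is your explicit verification that $U\otimes_R K=0$, which the paper uses tacitly.
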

\begin{proof}
(i) By the tensor-$\Hom$ adjunction, we have theisomorphism. 
\[
 \Hom_R(U, \Hom_R(K,M)) \cong \Hom_R(U \otimes_R K, M) =0
\]
To see that $\Ext_R^1(U, \Hom_R(K,M))=0$, we use the flatness of $U$. Using $\Tor^R_1(U,K)=0$ there is an inclusion (see the homological formulas in Section~\ref{S:prelim}).
\[
\Ext_R^1(U, \Hom_R(K,M))\hookrightarrow\Ext_{R}^1(U \otimes_R K, M)=0
\]
(ii) This follows by Lemma~\ref{L:Delta-tf} and (i) of this lemma.
\end{proof}

\begin{lem}\label{L:mu-commute}
Let $u\colon R \to U$ be a flat injective ring epimorphism of commutative rings and let $K:=U/R$. For $N$ a $\G$-torsion-free module, the following diagram commutes. 
\[\xymatrix{
N \ar[r]^{\delta_N} \ar[d]_{\nu_N}& \Ext^1_R(K,N) \\
\Hom_R(K, N \otimes_R K) \ar[ru]_{\cong}^{\mu_N}}
\]
\end{lem}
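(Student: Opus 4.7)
The plan is to exhibit both $\delta_N$ and $\mu_N\nu_N$ as the two legs of a single naturality square for the connecting homomorphism.

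First, I would identify $\delta_N$ with the connecting homomorphism obtained by applying $\Hom_R(-,N)$ to the short exact sequence $(\ref{E:u})$. Indeed, the induced long exact sequence
\[0\to\Hom_R(K,N)\to\Hom_R(U,N)\to\Hom_R(R,N)\xrightarrow{\partial}\Ext^1_R(K,N)\to\Ext^1_R(U,N)\to 0\]
has the image of $\Hom_R(U,N)\to\Hom_R(R,N)=N$ equal to $h_u(N)$ by definition, so $\partial$ factors as $N\twoheadrightarrow N/h_u(N)\hookrightarrow\Ext^1_R(K,N)$, which is precisely the composite defining $\delta_N$ in $(\ref{eq:8contra})$ and $(\ref{eq:2contra})$. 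By the standard description of the connecting homomorphism via pushout, if $m\in N$ is identified with the map $f_m\colon R\to N$ sending $1$ to $m$, then $\delta_N(m)=(f_m)_\ast([\ep])$, where $[\ep]\in\Ext^1_R(K,R)$ is the Yoneda class of the sequence $(\ref{E:u})$. Note that this identification requires no torsion-freeness assumption on $N$.

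Next, for each $m\in N$ I would assemble the morphism of short exact sequences
\[
\xymatrix@C=18pt{
0 \ar[r] & R \ar[r]^u \ar[d]_{f_m} & U \ar[r] \ar[d]^{f_m\otimes_R\id_U} & K \ar[r] \ar[d]^{\nu_N(m)} & 0\\
0 \ar[r] & N \ar[r] & N\otimes_R U \ar[r] & N\otimes_R K \ar[r] & 0,
}
\]
in which the rightmost vertical arrow sends $z+R\mapsto m\otimes(z+R)$ and therefore coincides with $\nu_N(m)$ by the very definition of the unit of $\adjK$. Commutativity of the two squares is immediate from the formula for $f_m$ and the naturality of the tensor product.

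Applying $\Hom_R(K,-)$ and invoking the naturality of the connecting homomorphism with respect to morphisms of short exact sequences now yields the commutative square
\[
\xymatrix@C=28pt{
\Hom_R(K,K) \ar[r]^-{\partial} \ar[d]_{(\nu_N(m))_\ast} & \Ext^1_R(K,R) \ar[d]^{(f_m)_\ast}\\
\Hom_R(K,N\otimes_R K) \ar[r]^-{\mu_N} & \Ext^1_R(K,N).
}
\]
Chasing $\id_K\in\Hom_R(K,K)$: going right then down sends $\id_K$ to $\partial(\id_K)=[\ep]$ and then to $(f_m)_\ast([\ep])=\delta_N(m)$, while going down then right sends $\id_K$ to $\nu_N(m)$ and then to $\mu_N(\nu_N(m))$. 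Equality of the two outputs gives the claim. There is no serious obstacle; the only mildly delicate point is the identification $\delta_N(m)=(f_m)_\ast([\ep])$ from the first step, which is a routine diagram chase from the standard construction of $\partial$.
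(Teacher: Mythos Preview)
Your proof is correct and rests on the same morphism of short exact sequences as the paper's argument---the one induced by $f_m\colon R\to N$ from the sequence $(\ref{E:u})$ to $(\ref{eq:6contra})$. The paper proceeds more explicitly: it inserts the pushout extension $\zeta_x$ as a middle row between these two sequences, then identifies the composite $U\to Z_x\to N\otimes_R U$ via a small uniqueness argument (using that $N\otimes_R U$ is $\G$-torsion-free and $N\otimes_R K$ is $\G$-torsion), and finally reads off $g_x=\mu_N^{-1}(\zeta_x)$ from the pullback description. Your use of the naturality square for the connecting homomorphism of $\Hom_R(K,-)$, together with the chase of $\id_K$, packages the same computation more cleanly and sidesteps that uniqueness step entirely; the identification $\delta_N(m)=(f_m)_\ast([\ep])$ is exactly what the paper also uses (its top two rows) but left implicit.
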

\begin{proof}
The morphism $\nu_N$ is the unit of the adjunction $\adjK$ evaluated at $N$. Let $\Phi$ be the adjunction isomorphism
\[
\Phi\colon \Hom_R(N,\Hom_R(K,N\otimes_RK)) \to \Hom_R(N\otimes_RK, N\otimes_RK))\]

As $\Phi(\nu_N) = \id_{N \otimes_R K}$, it is enough to show that $\Phi(\mu_N^{-1} \delta_N) = \id_{N\otimes_R K}$, that is that $(\mu_N^{-1} \delta_N)(x)(k)=x\otimes_R k$ for every $x \in N$ and $k \in K$.

Fix $x \in N$ and $k \in K$. Consider the map $f_x\colon R\to N, 1_R\mapsto x$. Then $\delta_N(f_x)$ is the map associated to the pushout of $N \overset{f_x} \gets R \overset{u}\to U$ which is shown in the top two rows of short exact sequences of Diagram~\ref{eq:pos}. As $\mu_N$ is an isomorphism, for each extension $\zeta_x$ of $K$ by $N$, one can associate a map $\mu_N^{-1}(\zeta_x)=g_x\colon K \to N\otimes_R K$ such that the bottom two rows of short exact sequences in (\ref{eq:pos}) commute and form part of a pullback diagram. 
\begin{equation}\label{eq:pos}
 \xymatrix{
&0 \ar[r] & R \ar[r] \ar[d]^{f_x} & U \ar[r] \ar[d]& 		K \ar[r] \ar@{=}[d] & 0 \\
\zeta_x\colon &0 \ar[r] & N \ar[r] \ar@{=}[d] & 	Z_x \ar[r] \ar[d] & 	K \ar[r] \ar[d]^{g_x} & 0\\
&0 \ar[r] & N \ar[r] & 			N \otimes_R U \ar[r] & 	N \otimes_R K \ar[r] & 0}
\end{equation}
As $N\otimes_RU$ is $\G$-torsion-free and $N\otimes_RK$ is $\G$-torsion the commutativity of the larger square implies that the map $U \to Z_x \to N \otimes_R U$ is exactly the map $z \mapsto x \otimes_R z$ for $z \in U$, as this map makes the larger left square commute. Thus $g_x\colon z+R \mapsto x \otimes_R(z+R)$. It is now straightforward to see that $(\mu_N^{-1} \delta_N)(x)(k)=(\mu_N^{-1})(\zeta_x)(k)=g_x(k)=x\otimes_R k$. 
\end{proof}
%
\begin{cor}
Let $u\colon R \to U$ be a flat injective ring epimorphism of commutative rings, $K:=U/R$ and let $N$be a  $\G$-torsion-free module. Then the kernel of $\nu_N\colon N \to \Hom_R(K, N \otimes_R K)$ is $u$-divisible and the cokernel is a $U$-module.
\end{cor}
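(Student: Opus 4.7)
The plan is to deduce everything from the factorisation $\delta_N = \mu_N \circ \nu_N$ established in Lemma~\ref{L:mu-commute} together with the fact that $\mu_N$ is an isomorphism (Lemma~\ref{L:Delta-tf}). Since $\mu_N$ is an isomorphism, it induces an isomorphism $\Ker \nu_N \cong \Ker \delta_N$ and $\Coker \nu_N \cong \Coker \delta_N$. So it is enough to identify the kernel and cokernel of $\delta_N$ concretely.

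First I would unravel how $\delta_N$ was defined: by construction it is the composite of the canonical projection $N \twoheadrightarrow N/h_u(N)$ from sequence (\ref{eq:8contra}) with the injection $N/h_u(N) \hookrightarrow \Ext^1_R(K,N)$ from sequence (\ref{eq:2contra}). Reading off these two short exact sequences I therefore obtain $\Ker \delta_N = h_u(N)$ and $\Coker \delta_N \cong \Ext^1_R(U,N)$. The first is $u$-divisible by the very definition of $h_u(N)$ as the maximal $u$-divisible submodule (or equivalently as the image of $u^\ast\colon \Hom_R(U,N) \to N$), so the kernel statement is immediate.

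For the cokernel, I need to observe that $\Ext^1_R(U,N)$ carries a natural $U$-module structure. This is a general fact: since $U$ is an $R$-$U$-bimodule (using its own ring multiplication on the right and the $R$-algebra structure on the left), the functor $\Hom_R(U,-)$ takes values in $\ModU$, and this structure passes to the derived functor $\Ext^1_R(U,-)$. Combining, $\Coker \nu_N \cong \Coker \delta_N \cong \Ext^1_R(U,N)$ is naturally a $U$-module.

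There is really no serious obstacle here --- the statement is essentially a repackaging of Lemma~\ref{L:mu-commute} once one notes that the two short exact sequences (\ref{eq:8contra}) and (\ref{eq:2contra}) that defined $\delta_N$ already display its kernel and cokernel as $h_u(N)$ and $\Ext^1_R(U,N)$ respectively. The only subtle point worth stating clearly in the write-up is the naturality of the $U$-action on $\Ext^1_R(U,N)$, which guarantees that the identification of $\Coker \nu_N$ with $\Ext^1_R(U,N)$ is an identification of $U$-modules (and in particular that the cokernel is annihilated by the $R$-action factoring through $u$).
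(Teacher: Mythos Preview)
Your proposal is correct and follows essentially the same argument as the paper: both use $\delta_N=\mu_N\nu_N$ with $\mu_N$ an isomorphism to identify $\Ker\nu_N\cong h_u(N)$ and $\Coker\nu_N\cong\Ext^1_R(U,N)$ via sequences (\ref{eq:8contra}) and (\ref{eq:2contra}). Your added remark on the $U$-module structure of $\Ext^1_R(U,N)$ is a welcome elaboration of a point the paper leaves implicit.
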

\begin{proof}
This follows from Lemma~\ref{L:mu-commute} as $\mu_N\nu_N=\delta_N$ and $\mu_N$ is an isomorphism. So $\Ker \nu_N \cong \Ker \delta_N = h_u(N)$ is a $u$-divisible module and $\Coker \nu_N = \Coker \delta_N = \Ext^1_R(U,N)$ is a $U$-module as required.
\end{proof}

The following lemma will be useful in Section~\ref{S:Gperf}. It is taken from \cite{Pos} where it is proved for the case of a localisation of a ring at a multiplicative subset. We show how to adapt the proof to our situation.
\begin{lem}\cite[Lemma 1.11]{Pos}\label{L:R-J-tens-tf}
Let $u\colon R \to U$ be a flat injective ring epimorphism with associated Gabriel topology $\G$, and let $M$ be any $R$-module. Then $M /JM\cong R/J\otimes_R \Delta_u(M) $ is an isomorphism for every $J \in \G$.
\end{lem}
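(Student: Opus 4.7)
The plan is to compute both sides of the claimed isomorphism in terms of the canonical connecting map $\delta_M\colon M \to \Delta_u(M)$, whose kernel is $h_u(M)$ and whose cokernel is $\Ext^1_R(U,M)$ by the exact sequences (\ref{eq:8contra}) and (\ref{eq:2contra}). Since $M/JM \cong R/J \otimes_R M$, the task reduces to showing that tensoring with $R/J$ identifies $M$ with $\Delta_u(M)$. My approach is to apply $R/J \otimes_R -$ to the two short exact sequences above and show that the ``noise'' contributed by $h_u(M)$ and $\Ext^1_R(U,M)$ vanishes together with the relevant $\Tor^R_1$.

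The first key input is that $J \in \G$ precisely means $JU = U$, hence $R/J \otimes_R U \cong U/JU = 0$. Two consequences follow. First, since $h_u(M)$ is $u$-divisible, I can fix a surjection $U^{(\alpha)} \twoheadrightarrow h_u(M)$ for some cardinal $\alpha$; tensoring with $R/J$ kills the source, so $R/J \otimes_R h_u(M) = 0$. Second, $\Ext^1_R(U,M)$ inherits a natural $U$-module structure from the $R$-$U$-bimodule structure on $U$ (which exists because $u$ is a ring epimorphism, so $U \otimes_R U \cong U$). Combined with the flatness of $U$ over $R$, the change-of-rings formula (\ref{eq:Tor-form}) yields
\[
\Tor^R_i(R/J,\, \Ext^1_R(U,M)) \cong \Tor^U_i(R/J \otimes_R U,\, \Ext^1_R(U,M)) = 0
\]
for every $i \geq 0$; in particular both $R/J \otimes_R \Ext^1_R(U,M) = 0$ and $\Tor^R_1(R/J, \Ext^1_R(U,M)) = 0$.

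To finish, I will tensor (\ref{eq:8contra}) with $R/J$ and use $R/J \otimes_R h_u(M) = 0$ to get $R/J \otimes_R M \cong R/J \otimes_R (M/h_u(M))$; then tensor (\ref{eq:2contra}) with $R/J$ and use the two vanishing statements above to get $R/J \otimes_R (M/h_u(M)) \cong R/J \otimes_R \Delta_u(M)$. Chaining these isomorphisms produces $M/JM \cong R/J \otimes_R \Delta_u(M)$. The only subtle point requiring justification is the $U$-module structure on $\Ext^1_R(U,M)$ and the ensuing $\Tor$-vanishing, but this is standard for flat ring epimorphisms; no other real obstacle is in sight.
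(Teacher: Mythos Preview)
Your proposal is correct and follows essentially the same route as the paper: tensor the two short exact sequences (\ref{eq:8contra}) and (\ref{eq:2contra}) with $R/J$, kill $h_u(M)$ by divisibility, and kill both $R/J\otimes_R\Ext^1_R(U,M)$ and $\Tor^R_1(R/J,\Ext^1_R(U,M))$ via the change-of-rings isomorphism for the flat $U$. The only cosmetic difference is that the paper phrases the first vanishing as ``$h_u(M)$ is $\G$-divisible'' rather than invoking a surjection from $U^{(\alpha)}$, but this is the same fact.
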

\begin{proof}
Consider the equations.
\begin{equation} \tag{\ref{eq:8contra}}
0 \to h_u(M) \to M \to M/h_u(M) \to 0 \\
\end{equation}
\begin{equation} \tag{\ref{eq:2contra}}
0 \to M / h_u(M) \to \Ext^1_R(K,M) \to \Ext^1_R(U,M) \to 0
\end{equation}
Applying $(R/J \otimes_R -)$ to (\ref{eq:8contra}) we have that $M/JM \cong R/J \otimes_R M/h_u(M)$ as $h_u(M)$ is $\G$-divisible. Applying $(R/J \otimes_R -)$ to (\ref{eq:2contra}) we find \[R/J \otimes_R M/h_u(M)\cong R/J \otimes_R \Ext^1_R(K,M)\] as $\Ext^1_R(U,M)$ is a $U$-module and $\Tor^R_1(R/J, \Ext^1_R(U, M))\cong \Tor^U_1(R/J\otimes_RU, \Ext^1_R(U, M))$, since $U$ is flat.\end{proof}
The following lemma will also be useful in Section~\ref{S:Gperf}. 
\begin{lem}\label{L:Tor-R/J-Gtf}
Let $u\colon R \to U$ be a flat injective ring epimorphism with associated Gabriel topology $\G$ and let $N$ be a $\G$-torsion-free $R$-module. Then $\Tor^R_1(R/J, N) \cong \Tor^R_1(R/J, \Delta_u(N))$ is an isomorphism for every $J \in \G$.
\end{lem}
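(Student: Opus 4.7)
The plan is to mimic the strategy of Lemma~\ref{L:R-J-tens-tf}, splitting the two short exact sequences (\ref{eq:8contra}) and (\ref{eq:2contra}) for $M=N$, but now tracking $\Tor^R_1(R/J,-)$ instead of the mere tensor product. Explicitly, for the $\G$-torsion-free module $N$ I would work with
\[0 \to h_u(N) \to N \to N/h_u(N) \to 0 \quad \text{and} \quad 0 \to N/h_u(N) \to \Delta_u(N) \to \Ext^1_R(U,N) \to 0,\]
and apply $R/J \otimes_R -$ to both, reading off the induced long exact sequences of $\Tor$.

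The key observation is that both $h_u(N)$ and $\Ext^1_R(U,N)$ carry a natural $U$-module structure. For $\Ext^1_R(U,N)$ this is immediate because $u$ is a ring epimorphism. For $h_u(N)$, I would note that it is $\G$-torsion-free (as a submodule of $N$) and, in the flat injective setting, also $\G$-divisible (since $u$-divisible modules coincide with $\G$-divisible modules), so Lemma~\ref{L:G-top-facts}(i) promotes it to an $R_\G = U$-module.

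Once this is in hand, I would invoke the standard vanishing argument: for any $U$-module $X$ and any $J \in \G$, the flatness of $U$ over $R$ together with the change-of-rings isomorphism (\ref{eq:Tor-form}) gives $\Tor^R_i(R/J, X) \cong \Tor^U_i(R/J \otimes_R U, X) = 0$ for every $i \geq 1$, because $R/J \otimes_R U = 0$ by definition of $\G$. Applying this vanishing to $X = h_u(N)$ in the first sequence produces $\Tor^R_1(R/J, N) \cong \Tor^R_1(R/J, N/h_u(N))$, and applying it to $X = \Ext^1_R(U,N)$ in the second gives $\Tor^R_1(R/J, N/h_u(N)) \cong \Tor^R_1(R/J, \Delta_u(N))$; composing the two isomorphisms yields the claim.

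There is no real obstacle here: the argument is a mechanical extension of that of Lemma~\ref{L:R-J-tens-tf} to one degree higher, and the only point that requires a moment's care is the identification of $h_u(N)$ as a $U$-module, which rests on the coincidence of $u$-divisibility and $\G$-divisibility in our flat injective setting.
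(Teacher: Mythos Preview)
Your proof is essentially identical to the paper's: both apply $(R/J\otimes_R-)$ to the sequences (\ref{eq:8contra}) and (\ref{eq:2contra}), use that $h_u(N)$ and $\Ext^1_R(U,N)$ are $U$-modules, and invoke the Tor vanishing via flatness of $U$. One small imprecision: the full coincidence of $u$-divisible and $\G$-divisible modules requires $\pdim U\leq 1$, which is not assumed here; what you actually need (and what the paper uses) is only the implication $u$-divisible $\Rightarrow$ $\G$-divisible, which holds because $U=R_\G$ is $\G$-divisible whenever $\G$ is perfect.
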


\begin{proof} 
Note first that $\Tor^R_i(R/J, Z)=0 = R/J \otimes_RZ$ for any $U$-module $Z$ and $i>0$ since $U$ is flat and so $\Tor^R_i(R/J, Z) \cong \Tor^U_i(R/J \otimes_R U, Z)=0$.

Consider the combination of the equations
\begin{equation} \tag{\ref{eq:8contra}}
0 \to h_u(N) \to N \to N/h_u(N) \to 0 \\
\end{equation}
\begin{equation} \tag{\ref{eq:2contra}}
0 \to N / h_u(N) \to \Ext^1_R(K,N)=\Delta_u(N) \to \Ext^1_R(U,N) \to 0
\end{equation}
As $N$ is $\G$-torsion-free, also $h_u(N)$ is $\G$-torsion-free and $\G$-divisible, so is a $U$-module, by Lemma~\ref{L:G-top-facts}~(i). Thus applying $(R/J\otimes_R -)$ to the above sequences, we use the observation in the first lines of this proof, and find the following isomorphisms. 
\[
\Tor^R_1(R/J, N) \cong \Tor^R_1(R/J, N/h_u(N)) \cong\Tor^R_1(R/J, \Delta_u(N))
\]
\end{proof}
We summarize in the following corollary the results obtained by Lemmas~\ref{L:Delta-tf}, ~\ref{L:R-J-tens-tf} and ~\ref{L:Tor-R/J-Gtf}.
\begin{cor}\label{C:R/J-tors-free} Let $u\colon R \to U$ be a flat injective ring epimorphism with associated Gabriel topology $\G $. Let $N$ be a $\G$-torsion-free $R$-module and $J\in \G$. Then the following hold.
\begin{enumerate}
\item[(i)] $\Hom_R(K, N\otimes_R K)\cong \Delta_u(N)$.
\item[(ii)] $R/J\otimes_R  \Delta_u(N)\cong N/JN$.
\item[(iii)] $\Tor^R_1(R/J, N) \cong \Tor^R_1(R/J, \Delta_u(N))$.
\end{enumerate}
\end{cor}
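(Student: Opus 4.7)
The statement in question is a summary corollary collecting three facts that have just been established. My plan is therefore to organize a short proof that simply references the three preceding lemmas, pointing out how each assertion is obtained and what structural input is used.

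For (i), I would cite Lemma~\ref{L:Delta-tf} directly. The isomorphism is the connecting homomorphism $\mu_N$ in the long exact sequence obtained by applying $\Hom_R(K,-)$ to the short exact sequence
\[
0 \to N \to N\otimes_R U \to N\otimes_R K \to 0,
\]
which is exact because $N$ is $\G$-torsion-free (so $\Tor^R_1(N,K)=0$ via flatness of $U$). The two flanking terms $\Hom_R(K, N\otimes_R U)$ and $\Ext^1_R(K, N\otimes_R U)$ vanish since $N\otimes_R U$ is $\G$-closed ($K$ is $\G$-torsion) and, alternatively, since $\Ext^1_R(K,N\otimes_R U) \cong \Ext^1_U(K\otimes_R U, N\otimes_R U)=0$ because $K\otimes_R U=0$.

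For (ii), I would apply Lemma~\ref{L:R-J-tens-tf} to $M=N$. Recall that the proof of that lemma tensors the sequences (\ref{eq:8contra}) and (\ref{eq:2contra}) with $R/J$: the $u$-divisible submodule $h_u(N)$ disappears after $(R/J\otimes_R-)$ because $JU=U$, and $\Ext^1_R(U,N)$ is a $U$-module, so both it and $\Tor^R_1(R/J,\Ext^1_R(U,N))$ vanish for $J\in \G$. This yields $R/J\otimes_R N \cong R/J\otimes_R \Delta_u(N)$, i.e.\ $N/JN \cong R/J\otimes_R \Delta_u(N)$.

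For (iii), I would simply cite Lemma~\ref{L:Tor-R/J-Gtf}, whose proof again uses (\ref{eq:8contra}) and (\ref{eq:2contra}) together with the fact that $h_u(N)$ is $\G$-torsion-free and $\G$-divisible (by Lemma~\ref{L:G-top-facts}(i)) hence a $U$-module, so that $\Tor^R_i(R/J, h_u(N))=0$ and $\Tor^R_i(R/J,\Ext^1_R(U,N))=0$ for $i\geq 0$ (using flatness of $U$). Since this corollary is a pure bookkeeping step, there is no real obstacle: the only point requiring slight care is to keep track of which hypothesis ($\G$-torsion-freeness of $N$) is used where, namely in (i) and (iii) directly, while (ii) holds for arbitrary $M$ and is just specialised to $N$ here.
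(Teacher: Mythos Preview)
Your proposal is correct and matches the paper's approach exactly: the corollary is simply a summary of Lemmas~\ref{L:Delta-tf}, \ref{L:R-J-tens-tf}, and \ref{L:Tor-R/J-Gtf}, and you have cited each in the right place (including the observation that (ii) holds for arbitrary $M$ and is merely specialised to $N$).
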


As an application we consider the endomorphism ring $\mathfrak{R}$ of $K$. Recall that by \cite[Lemma 4.1]{BP3}, $\mathfrak{R}$ is a commutative ring.

\begin{prop}\label{P:same-topologies}  Let $u\colon R \to U$ be a flat injective ring epimorphism with associated Gabriel topology $\G $ and let $K:=U/R$. Then the finite topology and the $\G$-topology on $\mathfrak{R}=\Hom_R(K,K)$ coincide.
\end{prop}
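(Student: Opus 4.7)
The plan is to leverage the one inclusion already in hand and to reduce the other to an evaluation argument via a crucial cyclic quotient identification. By Lemma~\ref{L:HomK-H-sep}, every basic neighbourhood of zero in the finite topology on $\mathfrak{R}$ contains $\mathfrak{R}J$ for some $J\in\G$, so the $\G$-topology is already at least as fine as the finite topology. The task therefore reduces to the converse: for every $J\in\G$, I would produce a finitely generated submodule $X\subseteq K$ such that $V_X:=\{f\in\mathfrak{R}\mid f|_X=0\}\subseteq \mathfrak{R}J$.

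Given $J\in\G$, I would apply Lemma~\ref{L:finmanyann} to choose $z_1,\dots,z_n\in U$ with $J_X:=\bigcap_{i}\Ann_R(z_i+R)\subseteq J$, and set $X:=\sum_{i}R(z_i+R)\subseteq K$. Then $X$ is finitely generated with $\Ann_R(X)=J_X$, and $J_X$ lies in $\G$ as a finite intersection of its members. Since $\mathfrak{R}J_X\subseteq \mathfrak{R}J$, it is enough to prove $V_X=\mathfrak{R}J_X$. The inclusion $\mathfrak{R}J_X\subseteq V_X$ is immediate from $(rg)(z_i+R)=g(r(z_i+R))=g(0)=0$ for $r\in J_X$ and $g\in\mathfrak{R}$.

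For the reverse inclusion I would use the isomorphism $\mathfrak{R}/\mathfrak{R}J_X\cong R/J_X$, obtained by combining Lemma~\ref{L:Delta-tf} with $N=R$, which provides the $R$-module isomorphism $\mu_R\colon \mathfrak{R}=\Hom_R(K,R\otimes_R K)\to \Delta_u(R)$, with Corollary~\ref{C:R/J-tors-free}(ii) applied to $N=R$, which gives $R/J_X\otimes_R\Delta_u(R)\cong R/J_X$. Lemma~\ref{L:mu-commute} then identifies the cyclic generator: because $\mu_R\nu_R=\delta_R$ and $\nu_R(1)=\id_K$, the isomorphism $R/J_X\to \mathfrak{R}/\mathfrak{R}J_X$ is $r+J_X\mapsto r\,\id_K+\mathfrak{R}J_X$. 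In particular, every $f\in\mathfrak{R}$ admits $r\in R$, unique modulo $J_X$, such that $f-r\,\id_K\in\mathfrak{R}J_X$.

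For $f\in V_X$, since $\mathfrak{R}J_X$ annihilates $X$ we obtain $f(z_i+R)=r(z_i+R)$ for every $i$; but $f\in V_X$ forces $r(z_i+R)=0$ for all $i$, hence $r\in\bigcap_i\Ann_R(z_i+R)=J_X$. Therefore $r\,\id_K\in\mathfrak{R}J_X$ and $f\in\mathfrak{R}J_X$, completing the argument. The main obstacle I foresee is pinning down the cyclic generator of $\mathfrak{R}/\mathfrak{R}J_X$: once Lemma~\ref{L:mu-commute} is used to identify it with $\id_K+\mathfrak{R}J_X$, the containment $V_X\subseteq\mathfrak{R}J_X$ follows from a transparent evaluation on the elements $z_1,\dots,z_n$.
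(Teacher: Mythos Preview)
Your proposal is correct and follows essentially the same route as the paper's proof: both directions rely on Lemma~\ref{L:HomK-H-sep} and Lemma~\ref{L:finmanyann}, and the heart of the argument is the identification $\mathfrak{R}/\mathfrak{R}I\cong R/I$ (for $I=\Ann_R(X)\in\G$) coming from $\mathfrak{R}\cong\Delta_u(R)$ together with Corollary~\ref{C:R/J-tors-free}(ii). The only cosmetic difference is that the paper packages the final step as a commutative triangle $R/I\to\mathfrak{R}/\mathfrak{R}I\twoheadrightarrow\mathfrak{R}/V_X$ and observes that the diagonal map is injective, whereas you unwind the generator via Lemma~\ref{L:mu-commute} and evaluate on the $z_i$'s; these are the same computation.
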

\begin{proof} Let $X$ be a finite subset of $K$ and let $V_X$ be the annihilator of $X$ in $\mathfrak{R}$, a basis element of the finite topology on $\mathfrak{R}$.
 By Lemma~\ref{L:HomK-H-sep} there is a $J\in \G$ such that $\mathfrak{R}J\subseteq V_X$, so the $\G$-topology is a finer topology than the finite topology on $\mathfrak{R}$. Thus it remains to show that for every $J\in \G$, $\mathfrak{R}J$ contains $V_X$ for some finite subset $X$ of $K$.
 
 Consider the canonical morphism $\nu_R\colon R\to \mathfrak{R}$ sending an element $r\in R$ to the multiplication by $r$ on $K$.  Then, $I=\nu_R^{-1}(V_X)$ is the annihilator of $X$ in $R$. We have that $I\in \G$, since $K$ is $\G$-torsion. Clearly, $\mathfrak{R}I\subseteq V_X$.

Now it is straightforward to see that the following diagram commutes as the vertical and horizontal arrows are induced by $\nu_R$ and $\pi$ is the natural quotient map.
\[
\xymatrix@C=2cm{
R/I \ar[r]^{\nu \otimes_R R/I} \ar[d]_\gamma& \mathfrak{R}/\mathfrak{R}I \ar[dl]^\pi\\ 
\mathfrak{R}/V_X}
\]
By (i) and (ii) of Corollary~\ref{C:R/J-tors-free}, $\mathfrak{R}\cong\Delta_u(R)$ and $\nu \otimes_R R/I$ is an isomorphism. Since $\gamma$ is a monomorphism we conclude that $\pi$ is a monomorphism, and so $V_X= \mathfrak{R}I$.  

Fix a $J \in \G$. By Lemma~\ref{L:finmanyann} there exists a finitely generated $X \subseteq K$ such that the annihilator ideal of $X$ in $R$, is contained in $J$. So $V_X \subseteq \mathfrak{R}J$.
\end{proof}
If the flat injective ring epimorphism $u\colon R \to U$ is such that $\pdim U \leq 1$, then the category $\ucontra$ is also closed under cokernels and so is an abelian category. Moreover, if $\G$ is the associated Gabriel topology, then $\pdim U \leq 1$ if and only if the $u$-divisible modules and the $\G$-divisible modules coincide by Proposition~\ref{P:Hrb-5.4}.

\begin{prop}\label{P:Delta-left-adjoint}
Let $u\colon R \to U$ be a flat injective ring epimorphism such that $\pdim U \leq 1$. 
\begin{enumerate}\item[(i)] $\iota\colon \ucontra \hookrightarrow \ModR$ is an exact embedding and the functor $\Delta_u= \Ext^1_R(K, -)$ defines a left adjoint to this embedding. 
\item[(ii)] $\Delta_u(R)$ is a projective generator of $\ucontra$. The coproduct of $X$ copies of $\Delta_u(R)$ in $\ucontra$ is $\Delta_u(R^{(X)})$ and the projective objects in $\ucontra$ are direct summands of the objects of the form $\Delta (R^{(X)})$ for some set $X$. 
\end{enumerate}
\end{prop}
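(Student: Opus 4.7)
The plan is to treat parts (i) and (ii) in turn, relying on Lemma~\ref{L:oneten}, Lemma~\ref{L:some-ucontras}, and the exact sequences (\ref{eq:8contra}) and (\ref{eq:2contra}).

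For part (i), the first task is to promote $\ucontra$ to a full abelian subcategory of $\ModR$. Closure under kernels, extensions and products is already recorded in the discussion following Lemma~\ref{L:some-ucontras}; the new ingredient needed (and the place where $\pdim U\le 1$ is genuinely used) is closure under cokernels. Given a short exact sequence $0\to A'\to A\to A''\to 0$ with $A',A\in\ucontra$, apply $\Hom_R(U,-)$: the vanishing of $\Hom_R(U,A)$, $\Hom_R(U,A')$, $\Ext^1_R(U,A)$, $\Ext^1_R(U,A')$, together with $\Ext^2_R(U,A')=0$ (forced by $\pdim U\le 1$), forces $\Hom_R(U,A'')=\Ext^1_R(U,A'')=0$, so $A''\in\ucontra$; applied to the image of an arbitrary morphism of contramodules, this yields closure under cokernels and hence makes $\iota$ an exact embedding.

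Next I would verify that $\Delta_u(M)$ is a $u$-contramodule for every $M\in\ModR$. Choose an injection $0\to M\to E\to M''\to 0$ with $E$ injective; the long exact sequence obtained by applying $\Hom_R(K,-)$ identifies $\Delta_u(M)$ with the cokernel of $\Hom_R(K,E)\to\Hom_R(K,M'')$, both of which are $u$-contramodules by Lemma~\ref{L:some-ucontras}(i), so $\Delta_u(M)\in\ucontra$ by the closure just established. For the adjunction, the natural map $\delta_M\colon M\to\Delta_u(M)$ coming from the sequences (\ref{eq:8contra}) and (\ref{eq:2contra}) has kernel $h_u(M)$, which is $u$-divisible by construction, and cokernel $\Ext^1_R(U,M)$, which is a $U$-module; Lemma~\ref{L:oneten} then supplies, for each $C\in\ucontra$ and each $c\colon M\to\iota(C)$, a unique $f\colon\Delta_u(M)\to C$ with $c=f\delta_M$, giving a natural bijection $\Hom_\ucontra(\Delta_u(M),C)\cong\Hom_R(M,\iota(C))$.

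For part (ii), projectivity of $\Delta_u(R)$ is immediate from the adjunction: $\Hom_\ucontra(\Delta_u(R),-)\cong \Hom_R(R,\iota(-))\cong\iota(-)$ as functors $\ucontra\to\Ab$, and the right-hand side is exact because $\iota$ is. Since $\Delta_u$ is a left adjoint it commutes with coproducts, so the coproduct of $X$ copies of $\Delta_u(R)$ in $\ucontra$ is $\Delta_u(R^{(X)})$. To show that $\Delta_u(R)$ generates, fix $C\in\ucontra$ and choose any surjection $\pi\colon R^{(X)}\twoheadrightarrow\iota(C)$ in $\ModR$; by adjunction $\pi$ corresponds to a morphism $\bar\pi\colon\Delta_u(R^{(X)})\to C$ satisfying $\bar\pi\circ\delta_{R^{(X)}}=\pi$, whose image therefore contains $\pi(R^{(X)})=\iota(C)$, so $\bar\pi$ is surjective. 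Finally, any projective $P\in\ucontra$ admits such a surjection $\Delta_u(R^{(X)})\twoheadrightarrow P$, which splits by projectivity of $P$, realising $P$ as a direct summand of $\Delta_u(R^{(X)})$. The main obstacle is the closure under cokernels in the first step, since that is where the hypothesis $\pdim U\le 1$ is genuinely exploited; everything else then follows formally from Lemma~\ref{L:oneten} and the standard behaviour of a left adjoint to a fully faithful exact embedding.
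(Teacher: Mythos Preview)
Your proof is correct. The paper's own proof is minimal: for (i) it simply cites \cite[Proposition 3.2(b)]{BP3}, and for (ii) it says only that the statement ``follows by the properties of a left adjoint to an exact functor,'' which is exactly the formal argument you spell out. Your contribution is to replace the citation for (i) with a self-contained argument built from Lemma~\ref{L:oneten} and Lemma~\ref{L:some-ucontras}: closure of $\ucontra$ under cokernels via $\pdim U\le1$, the identification of $\Delta_u(M)$ as a cokernel of contramodules, and the universal property of $\delta_M$ from Lemma~\ref{L:oneten}. This is precisely the line of reasoning underlying \cite[Proposition 3.2]{BP3}, so there is no genuine divergence in strategy---you have simply unpacked the reference using the tools already assembled in the paper.
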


\begin{proof} (i) is \cite[Proposition 3.2(b)]{BP3}. 

(ii) follows by the properties of a left adjoint to an exact functor. \end{proof}

\subsection{The equivalence of categories}\label{SS:cat-equiv}

 In \cite{BP3} it is considered the case of a (not necessarily injective nor flat nor commutative) ring epimorphism $u\colon R\to U$ such that $\Tor^R_1(U,U)=0$. In \cite[Theorem 1.2]{BP3} it is shown that the adjunction $\big( (- \otimes_R K), \Hom_R(K,-) \big)$ (where $K= U/u(R)$) defines an equivalence between the class of $u$-divisible right $u$-comodules and the class of $u$-torsion-free left $u$-contramodules.

In our situation, that is when $u$ is a flat injective epimorphism with associated Gabriel topology $\G$, the class of $u$-comodules coincides with the class of $\G$-torsion modules and the class of $u$-torsion-free modules coincides with the class of $\G$-torsion-free modules. Thus, in our setting, Theorem 1.2 in \cite{BP3} becomes:

\begin{thm}\cite[Theorem 1.2]{BP3}\label{T:cat-equiv}
Let $u\colon R \to U$ be a flat injective ring epimorphism of commutative rings. Then the restrictions of the adjoint functors $(- \otimes_R K)$ and $\Hom_R(K, -)$ are mutually inverse equivalences between the additive categories of $u$-divisible $\G$-torsion modules and $\G$-torsion-free $u$-contramodules.\\
\[
\xymatrix{
\bigg\{ \txt{$\G$-torsion-free \\$u$-contramodules} \bigg\} \ar@/^2.5pc/[r]^{(- \otimes_R K)} & \bigg\{ \txt{$u$-divisible and \\$\G$-torsion modules} \bigg\} \ar@/^2.5pc/[l]^{\Hom_R(K,-)}
}
\]
\end{thm}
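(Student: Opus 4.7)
The plan is to specialise the adjunction $\big((-\otimes_R K),\Hom_R(K,-)\big)$, with unit $\nu$ and counit $\epsilon$, to the prescribed subcategories and verify directly that both unit and counit restrict to natural isomorphisms. Most of the homological input is already assembled in Lemmas~\ref{L:Delta-tf}, \ref{L:some-ucontras} and \ref{L:mu-commute}; what is needed is a clean packaging on the unit side and a separate essential-surjectivity computation on the counit side.

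First I would verify the codomains of the two functors. If $N$ is a $\G$-torsion-free $u$-contramodule, then $N\otimes_R K$ is $\G$-torsion (the annihilator of any $k\in K$ lies in $\G$ by Lemma~\ref{L:finmanyann}, hence so does the annihilator of every element of $N\otimes_R K$) and is $u$-divisible (being a quotient of the $U$-module $N\otimes_R U$). Conversely, for $M$ a $u$-divisible $\G$-torsion module, Lemma~\ref{L:some-ucontras}(i) gives that $\Hom_R(K,M)$ is a $u$-contramodule, and $\Hom_R(K,M)$ is $\G$-torsion-free since any $f$ annihilated by some $J\in\G$ must vanish on every $k=\sum j_i k_i\in JK=K$. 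For the unit, Lemma~\ref{L:mu-commute} gives the factorisation $\delta_N=\mu_N\circ\nu_N$ with $\mu_N$ an isomorphism by Lemma~\ref{L:Delta-tf}, while (\ref{eq:8contra}) and (\ref{eq:2contra}) identify $\Ker\delta_N=h_u(N)$ and $\Coker\delta_N=\Ext^1_R(U,N)$. Both vanish precisely when $N$ is a $u$-contramodule, so $\nu_N$ is an isomorphism on the required subcategory.

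The main obstacle is the counit. The strategy is to show that every $M$ in the target subcategory has the form $N\otimes_R K$ with $N$ in the source subcategory. Take a presentation $0\to L\to U^{(\alpha)}\to M\to 0$; then $L$ is $\G$-torsion-free. Tensoring $0\to R\to U\to K\to 0$ with $M$ and using $U\otimes_R M=0=K\otimes_R M$ (valid since $M$ is $\G$-torsion and $U\otimes_R U\cong U$) yields $\Tor^R_1(K,M)\cong M$; tensoring the presentation with $K$ instead gives $\Tor^R_1(M,K)\cong L\otimes_R K$ because $U^{(\alpha)}\otimes_R K=0$, whence $M\cong L\otimes_R K$. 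A second, parallel computation, tensoring both $0\to h_u(L)\to L\to L/h_u(L)\to 0$ and $0\to L/h_u(L)\to\Delta_u(L)\to\Ext^1_R(U,L)\to 0$ with $K$ and using that $Z\otimes_R K=0=\Tor^R_1(Z,K)$ for any $U$-module $Z$, yields $L\otimes_R K\cong\Delta_u(L)\otimes_R K$. Setting $N:=\Delta_u(L)$, which is a $\G$-torsion-free $u$-contramodule by Lemma~\ref{L:some-ucontras}(ii), we obtain $M\cong N\otimes_R K$. The triangle identity $\epsilon_{N\otimes_R K}\circ(\nu_N\otimes_R K)=\id_{N\otimes_R K}$ together with the unit-isomorphism step then forces $\epsilon_{N\otimes_R K}$, and therefore $\epsilon_M$ by naturality, to be an isomorphism.
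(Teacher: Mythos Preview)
Your argument is correct, and in fact it supplies more than the paper does: the paper does not prove this theorem at all but simply invokes \cite[Theorem~1.2]{BP3} and records that, for a flat injective ring epimorphism, the general notions of $u$-comodule and $u$-torsion-free module specialise to $\G$-torsion and $\G$-torsion-free respectively. So there is no ``paper's own proof'' to compare against, and your write-up is effectively a self-contained alternative to the cited reference, built out of the lemmas already assembled in Section~\ref{S:top}.

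A brief comparison of strategies is still worth making. The result in \cite{BP3} is proved in the generality of a homological ring epimorphism (only $\Tor^R_1(U,U)=0$ assumed) and works with the abstract notions of $u$-comodule and $u$-h-divisible module; the equivalence there is obtained by analysing the unit and counit directly in that generality. Your route exploits the extra hypotheses available here (flatness of $U$, commutativity, the identification with the Gabriel topology $\G$) to reduce everything to the concrete computations of Lemmas~\ref{L:Delta-tf}, \ref{L:some-ucontras} and \ref{L:mu-commute}. The unit side is then immediate from $\delta_N=\mu_N\nu_N$ together with the vanishing of $h_u(N)$ and $\Ext^1_R(U,N)$ for a $u$-contramodule $N$. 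On the counit side, rather than computing $\epsilon_M$ directly, you prove essential surjectivity: writing $M\cong L\otimes_RK$ via $\Tor_1^R(K,M)\cong M$ and $\Tor_1^R(M,K)\cong L\otimes_RK$, then replacing $L$ by the $\G$-torsion-free $u$-contramodule $\Delta_u(L)$ without changing $(-)\otimes_RK$, and finally invoking the triangle identity plus naturality. This is a clean and legitimate shortcut; what it buys you is that you never have to identify $\epsilon_M$ explicitly, at the cost of working only in the restricted (flat, commutative) setting rather than the general one of \cite{BP3}.

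One small point worth making explicit in your write-up: when you assert that $\Delta_u(L)$ is $\G$-torsion-free, this is not part of the statement of Lemma~\ref{L:some-ucontras}(ii); it follows because $\Delta_u(L)\cong\Hom_R(K,L\otimes_RK)$ by Lemma~\ref{L:Delta-tf} and you have already checked that $\Hom_R(K,-)$ takes values in $\G$-torsion-free modules.
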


%

\section{When $\A$ is covering, $R$ is $\G$-almost perfect}\label{S:cov-G-perf}

In this section we continue with the situation of Setting~\ref{set:A-cov}. 

By Proposition~\ref{P:R_G-tilting} if $(\A, \D_\G)$ is a $1$-tilting pair such that $\A$ is covering then the associated tilting module arises from a flat injective ring epimorphism $u\colon R \to U$ and $U \oplus K$ is a $1$-tilting module for $(\A, \D_\G)$, thus $\D_\G = \Gen(U)$.

In Proposition~\ref{P:Rg-perfect-ring} we prove that $R_\G$ is a perfect ring and in Proposition~\ref{P:R-J-Bass} that the rings $R/J$ are perfect for every $J \in \G$. The main result of this section is Theorem~\ref{T:cov-implies-Gperf}.

We begin by introducing the following definition.
\begin{defn}
Let $R$ be a ring with a right Gabriel topology $\G$. Then $R$ is \emph{$\G$-almost perfect} if $R_\G$ is a perfect ring and the quotient rings $R/J$ are perfect for each $J \in \G$.
\end{defn}

\begin{prop}\label{P:Rg-perfect-ring}
Suppose $R$ is commutative and let $(\A, \D_\G)$ be a $1$-tilting cotorsion pair with associated Gabriel topology $\G$. Suppose $\A$ is covering. Then $R_\G$ is a perfect ring.
\end{prop}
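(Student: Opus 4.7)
The plan is to apply Bass's criterion (Lemma~\ref{L:Bass-mod-perf}(ii)): to conclude that $U:=R_\G$ is perfect it suffices to show that every Bass $U$-module is projective as a $U$-module. Fix such a Bass module
\[
F=\varinjlim\bigl(U \xrightarrow{a_1} U \xrightarrow{a_2}\cdots\bigr),
\]
viewed as an $R$-module via the flat injective ring epimorphism $u\colon R \to U$ afforded by Proposition~\ref{P:R_G-tilting}. That proposition also guarantees $U\in\A$, so $F$ is a direct limit of copies of $U\in\A$. Because $\A$ is covering and $(\A,\D_\G)$ is a $1$-tilting cotorsion pair, the Angeleri H\"ugel--\v{S}aroch--Trlifaj theorem recalled in the Introduction ensures that $\A$ is closed under direct limits, hence $F\in\A$.

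Next, I observe that $F$ is a $U$-module, so for every $J\in\G$ the equality $JU=U$ (valid because $\G$ is perfect by Lemma~\ref{L:cov-G-perf}) gives $JF=J(UF)=(JU)F=F$, i.e.\ $F\in\D_\G$. Thus $F\in\A\cap\D_\G=\Add(U\oplus K)$, by the standard identification of the intersection of the two halves of a tilting cotorsion pair with $\Add(T)$ for the tilting module $T=U\oplus K$. Writing $F\oplus F'\cong (U\oplus K)^{(I)}$ in $\ModR$ and applying the exact functor $-\otimes_R U$, the key inputs are $U\otimes_R U\cong U$ (as $u$ is a ring epimorphism), $K\otimes_R U=0$ (since tensoring $0\to R\to U\to K\to 0$ with $U$ sends the inclusion $R\to U$ to the identity on $U$), and $F\otimes_R U\cong F$ (as $F$ is already a $U$-module). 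These combine to give $F\oplus (F'\otimes_R U)\cong U^{(I)}$ in $\ModU$, so $F$ is a direct summand of a free $U$-module, i.e.\ $U$-projective. Lemma~\ref{L:Bass-mod-perf}(ii) then concludes that $U$ is perfect.

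The delicate step is the reduction $F\in\A$, which rests on the closure of $\A$ under direct limits. If one wishes to give a self-contained argument not invoking Enochs' conjecture for tilting cotorsion pairs (in keeping with the paper's declared aim of providing ``yet another proof''), an alternative route is to show directly, via a T-nilpotency argument on the $\A$-cover of $F$ in the spirit of Lemma~\ref{L:tors-cov} combined with Theorem~\ref{T:Xu-sums-cov}, that the $\A$-cover of a Bass $U$-module already lives in $\Add(U)$. I expect this direct route---avoiding the external input---to be the main technical hurdle.
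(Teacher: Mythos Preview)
Your argument via Bass modules and the Angeleri H\"ugel--\v{S}aroch--Trlifaj theorem is logically correct, but as you yourself note, it relies on the very closure of $\A$ under direct limits that the paper is trying to establish by independent means. So for the paper's purposes this route is circular, and your sketched alternative (a T-nilpotency argument on the $\A$-cover of a Bass $U$-module) is not carried out.

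The paper's proof is in fact much simpler than either of your two routes and avoids Bass modules, T-nilpotency, and direct-limit closure altogether. It shows directly that \emph{every} $R_\G$-module $M$ has a projective cover in $\Modr{R_\G}$. One takes any free presentation
\[
0 \to L \to R_\G^{(\alpha)} \overset{\phi}\to M \to 0
\]
in $\Modr{R_\G}$. Since $R_\G\in\A$ (Proposition~\ref{P:R_G-tilting}) and $L\in\D_\G$, this is already a special $\A$-precover of $M$ in $\ModR$. By Theorem~\ref{T:Xu-cov} one extracts an $\A$-cover $\phi'\colon P\to M$ with $P$ a direct summand of $R_\G^{(\alpha)}$. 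Because an $R$-module direct summand of an $R_\G$-module is $\G$-torsion-free and $\G$-divisible, hence an $R_\G$-module (Lemma~\ref{L:G-top-facts}(i)), and because $\Modr{R_\G}\to\ModR$ is fully faithful, $P$ is an $R_\G$-direct summand of $R_\G^{(\alpha)}$, i.e.\ projective over $R_\G$. Thus $\phi'$ is a $\clP_0(R_\G)$-precover; and any $R_\G$-endomorphism $f$ of $P$ with $\phi'f=\phi'$ is in particular $R$-linear, so is an automorphism by the $\A$-cover property. Hence $\phi'$ is a projective cover in $\Modr{R_\G}$.

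The contrast is instructive: rather than proving that a particular test class (Bass modules) lands in $\A$, the paper exploits that free $R_\G$-modules are already in $\A$ and that the cover-extraction procedure stays inside $\Modr{R_\G}$. This bypasses any need for closure of $\A$ under direct limits at this stage.
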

\begin{proof}
We will show that every $R_\G$-module has a projective cover in $\ModR_\G$. Consider $M \in \ModR_\G$ with the following short exact sequence in $\ModR_\G$. 
\begin{equation}\label{E:rain3}
0 \to L \to R_\G^{(\alpha)} \overset{\phi}\to M \to 0
\end{equation}
Then this sequence is also a short exact sequence of $R$-modules with $R_\G^{(\alpha)} \in \A$ by Proposition~\ref{P:R_G-tilting} and $L \in \D_\G$ thus it is an $\A$-precover of $M$ (as an $R$-module). By the assumption that $\A$ is covering, one can extract from the exact sequence (\ref{E:rain3}) an $\A$-cover of $M$ of the form
\begin{equation}\label{E:rain2}
0 \to L' \to P \overset{\phi'}\to M \to 0
\end{equation}
where $L'$ and $P$ are direct summands of $L$ and $R_\G^{(\alpha)}$ respectively as $R$-modules. 
An $R$-module direct summand of an $R_\G$-module is a $\G$-torsion-free $\G$-divisible module, hence it is an $R_\G$-module by Lemma~\ref{L:G-top-facts}(i). Moreover, by Lemma~\ref{L:cov-G-perf}, $\psi_R\colon R\to R_\G$ is a ring epimorphism (even flat) so $L'$ and $P$ are direct summands as $R_\G$-modules and (\ref{E:rain2}) is in $\ModR_\G$. 

So we have shown that (\ref{E:rain2}) is a $\clP_0(R_\G)$-precover of $M$ in $\ModR_\G$, which is also an $\A$-cover when considered in $\ModR$. It remains to see that it is a $\clP_0(R_\G)$-cover. Note that every $R_\G$-homomorphism $f$ such that $\phi'f = \phi'$ is also an $R$-homomorphism, and therefore $\phi'$ is an automorphism as it is an $\A$-cover. 
\end{proof}

We will now show that $R/J$ is perfect for each $J \in \G$ by showing that every Bass $R/J$-module has a $\clP_0(R/J)$-cover, that is using Lemma~\ref{L:Bass-mod-perf}.

Take $a_1, a_2, \dots, a_i, \dots$ a sequence of elements of $R$ and let $N$ be the Bass $R/J$-module with presentation as in the sequence (\ref{E:RJ-Bass}), where $(e_i)_{i\in \bbN}$ and $(f_i)_{i\in \bbN}$ are bases of the domain and codomain of $\phi$ respectively. 
\begin{equation}\label{E:RJ-Bass}
\xymatrix@R=0.25cm{
0 \ar[r]& \bigoplus_{\bbN} R/J \ar[r]^{\tilde\sigma}& \bigoplus_{\bbN} R/J \ar[r]& N \ar[r]& 0 \\
&e_i \ar@{|->}[r]& f_i - a_if_{i+1}}
\end{equation}
As the elements $a_1, a_2, \dots, a_i, \dots$ are in $R$, we can also define a Bass $R$-module, which is a lift of $N$. That is, we consider the following Bass $R$-module.
\begin{equation}\label{E:RJ-lift-Bass}
\xymatrix@R=0.25cm{
0 \ar[r]& \bigoplus_{\bbN} R \ar[r]^{\sigma}& \bigoplus_{\bbN} R \ar[r]& F \ar[r]& 0}
\end{equation}

It is clear that applying $(R/J \otimes_R -)$ to (\ref{E:RJ-lift-Bass}) will give us (\ref{E:RJ-Bass}), thus $R/J \otimes_R F = N$, where $F$ is flat. 

We will make use of results in Subsection~\ref{SS:u-contra} and the category equivalence in Theorem~\ref{T:cat-equiv}.
\begin{lem} \label{L:p-cov-u-contra}
Suppose $\A$ is covering and $F$ is a Bass $R$-module. Then the $u$-contramodule $\Hom_R(K, F \otimes_R K) $ has a projective cover in the category of $u$-contramodules. 
\end{lem}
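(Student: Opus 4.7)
The plan is to produce a projective cover of $\Delta_u(F) \cong \Hom_R(K, F\otimes_R K)$ in $\ucontra$ directly from the Bass presentation of $F$. I would first observe that since $(\A, \D_\G)$ is a $1$-tilting cotorsion pair with $\A$ covering, the Enochs Conjecture for $1$-tilting cotorsion pairs established in \cite{AST17} (and reflected in Proposition~\ref{P:indlim-sigmapuresplit}) forces $\A$ to be closed under direct limits. Since $R$ is projective, $R \in \A$, and hence $F = \varinjlim(R \xrightarrow{a_1} R \xrightarrow{a_2} \cdots)$ also lies in $\A$.

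Next, I would apply the functor $\Ext^*_R(K,-)$ to the defining exact sequence of $F$:
\[
0 \to \bigoplus_{n\in\bbN} R \xrightarrow{\sigma} \bigoplus_{n\in\bbN} R \to F \to 0.
\]
By Proposition~\ref{P:R_G-tilting}, $\pdim_R U \leq 1$, whence $\pdim_R K \leq 1$ from $0 \to R \to U \to K \to 0$; in particular $\Ext^2_R(K,-)$ vanishes. Since $\bigoplus_{n\in\bbN} R$ and $F$ are $\G$-torsion-free, $\Hom_R(K,-)$ also vanishes on them. The long exact sequence therefore collapses to the short exact sequence
\[
0 \to \Delta_u\bigl(\bigoplus_{n\in\bbN} R\bigr) \xrightarrow{\Delta_u(\sigma)} \Delta_u\bigl(\bigoplus_{n\in\bbN} R\bigr) \to \Delta_u(F) \to 0
\]
in $\ucontra$, and by Lemma~\ref{L:Delta-tf}, $\Delta_u(F) \cong \Hom_R(K, F\otimes_R K)$. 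By Proposition~\ref{P:Delta-left-adjoint}(ii), $\Delta_u(\bigoplus_{n\in\bbN} R)$ is the coproduct in $\ucontra$ of $\aleph_0$ copies of the projective generator $\Delta_u(R)$ and is hence a projective object of $\ucontra$, giving a projective presentation of $\Delta_u(F)$.

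The remaining step is to show that $\Img \Delta_u(\sigma)$ is superfluous in $\Delta_u(\bigoplus_{n\in\bbN} R)$, promoting the presentation to a projective cover; this is where I expect the main difficulty. The strategy is to invoke Theorem~\ref{T:Xu-sums-cov}(ii): by applying the covering hypothesis to a direct sum of $\A$-covers of an appropriate family of modules constructed from $F$ (e.g.\ the cyclic quotients $R/(a_1 a_2 \cdots a_n)$), one extracts a $T$-nilpotency statement for the multiplications by the $a_n$'s, which---after transport through $\Delta_u$---says that any sub-$u$-contramodule $H \subseteq \Delta_u(\bigoplus_{n\in\bbN} R)$ with $H + \Img\Delta_u(\sigma) = \Delta_u(\bigoplus_{n\in\bbN} R)$ must coincide with the whole coproduct. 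The delicate technical point is that essentiality in $\ucontra$ must be tested against sub-$u$-contramodules rather than arbitrary submodules, so the equivalence of Theorem~\ref{T:cat-equiv} between $\G$-torsion-free $u$-contramodules and $u$-divisible $\G$-torsion modules provides the natural bridge to translate the superfluity check back into the classical covering framework in $\ModR$.
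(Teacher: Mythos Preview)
Your step 2 is correct and gives the short exact sequence
\[
0 \to \Delta_u\bigl(R^{(\bbN)}\bigr) \xrightarrow{\Delta_u(\sigma)} \Delta_u\bigl(R^{(\bbN)}\bigr) \to \Delta_u(F) \to 0
\]
in $\ucontra$, but step 3 contains a genuine error rather than just a gap: this particular presentation is \emph{not} a projective cover in general. Take for instance all $a_i$ to be units; then $F=0$, the map $\Delta_u(\sigma)$ is an isomorphism, and its image---the whole of $\Delta_u(R^{(\bbN)})$---is certainly not a superfluous subobject unless $\Delta_u(R^{(\bbN)})=0$. More generally, the Bass presentation of $F$ is almost never minimal, and applying $\Delta_u$ does not remedy this. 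The $T$-nilpotency you hope to extract from Theorem~\ref{T:Xu-sums-cov}(ii) applied to covers of the modules $R/(a_1\cdots a_n)$ would, at best, give information about products of the $a_i$'s modulo ideals of $\G$, not the superfluity statement you need; and in any case, those cyclic modules need not be annihilated by any $J\in\G$, so Lemma~\ref{L:tors-cov} does not apply.

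The paper takes a different and more direct route: instead of trying to prove that a fixed presentation is already a cover, it \emph{uses the covering hypothesis to produce a cover} in $\ModR$ and then transfers it. Concretely, one first applies $(-\otimes_R K)$ to the Bass presentation to obtain the $\A$-precover $\bigoplus_\bbN K \twoheadrightarrow F\otimes_R K$ (the kernel lies in $\D_\G$ and $\bigoplus_\bbN K\in\A$ by Proposition~\ref{P:R_G-tilting}). Since $\A$ is covering, Theorem~\ref{T:Xu-cov} lets one split off a direct summand $D_0\to F\otimes_R K$ which \emph{is} an $\A$-cover, with kernel $D_1$ a summand of $\bigoplus_\bbN K$. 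Now apply $\Hom_R(K,-)$: the resulting sequence lives in $\ucontra$, the objects $\Hom_R(K,D_i)$ are summands of $\Hom_R(K,K^{(\bbN)})\cong\Delta_u(R^{(\bbN)})$ and hence projective, and the minimality transfers because the equivalence of Theorem~\ref{T:cat-equiv} is full---any endomorphism of $\Hom_R(K,D_0)$ over $\Hom_R(K,F\otimes_R K)$ is $\Hom_R(K,g)$ for some $g\colon D_0\to D_0$ over $F\otimes_R K$, and such $g$ is an automorphism by the cover property in $\ModR$. You mention the equivalence as a ``natural bridge,'' which is the right instinct, but it should be used to \emph{transport an already-existing cover}, not to verify superfluity of a non-minimal presentation.

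A smaller remark: your step 1, invoking \cite{AST17} to get $\A$ closed under direct limits, is both unused in what follows and at odds with the purpose of the paper, which is precisely to obtain ring-theoretic consequences of $\A$ being covering by direct algebraic arguments; importing the conclusion of the Enochs Conjecture here would be circular relative to Theorem~\ref{T:characterisation-cov}.
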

\begin{proof}
$\Hom_R(K, F \otimes_R K) $ is a $u$-contramodule by Lemma~\ref{L:some-ucontras}~(i). 
Apply the functor $(- \otimes_R K)$ to (\ref{E:RJ-lift-Bass}) to get the exact sequence:
\begin{equation*}
\xymatrix@C=1cm{
0 \ar[r]& \bigoplus_{\bbN} K \ar[r]^{\sigma \otimes_RK}& \bigoplus_{\bbN} K \ar[r]& F \otimes_R K \ar[r]& 0 }
\end{equation*}
The above is an $\A$-precover of $F\otimes_R K$ by Proposition~\ref{P:R_G-tilting}. As by assumption $\A$ is covering, one can extract an $\A$-cover of $F\otimes_R K$ from the above sequence in the form
\begin{equation*}
\xymatrix@C=1.4cm{
0 \ar[r]& D_1 \ar[r]^{(\sigma \otimes_RK)\restriction_{D_1}}& D_0 \ar[r]^\pi& F \otimes_R K \ar[r]& 0 }
\end{equation*}
where $D_0$ and $D_1$ are direct summands of $\bigoplus_{\bbN} K$. 
Now we apply $\Hom_R(K,-)$ to the above sequence, and we claim that it is a projective cover in the category of $u$-contramodules.
\begin{equation}\label{E:three}
0 \to \Hom_R(K,D_1) \to \Hom_R(K,D_0) \overset{\rho}\to \Hom_R(K,F \otimes_R K) \to 0
\end{equation}
Firstly, $\Hom_R(K,D_1)$ and $\Hom_R(K,D_0)$ are direct summands of modules of the form $\Hom_R(K, K^{(\alpha)})\cong \Delta_u(R^{(\alpha)})$ by Lemma~\ref{L:Delta-tf}, thus they are projective objects in the category $\ucontra$. We will show that $\rho:= \Hom_R(K, \pi)$ is a projective cover in $\ucontra$. Take $f\colon \Hom_R(K,D_0) \to \Hom_R(K,D_0)$ such that $\rho f= \rho$. 
By Theorem~\ref{T:cat-equiv}, the adjoint functors $\big((- \otimes_R K), \Hom_R(K, -)\big)$ form equivalences between the subcategories of $\G$-torsion $\G$-divisible modules $\E_\G \cap \D_\G$ and $\G$-torsion-free $u$-contramodules $\F_\G \cap \ucontra$. Thus in particular the functor $\Hom_R(K,-)$ restricted to the subcategories $\E_\G \cap \D_\G \to \F_\G \cap \ucontra$ is full so there exists a $g\colon D_0 \to D_0$ such that $\Hom_R(K, g) = f$.
Thus as $ \pi g=\pi$ implies that $g$ is an automorphism, we conclude that also $f$ is an automorphism, as required.
\end{proof}

\begin{prop}\label{P:R-J-Bass}
Suppose $(\A, \D_\G)$ is a $1$-tilting cotorsion pair over a commutative ring where $\A$ is covering. If $F$ is a Bass $R$-module, then $R/J \otimes_R F$ has a $\clP_0(R/J)$-cover, for every $J\in \G$.
\end{prop}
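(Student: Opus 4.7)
The plan is to transform the projective cover of $\Hom_R(K, F \otimes_R K)$ in $\ucontra$ supplied by Lemma~\ref{L:p-cov-u-contra},
$$0 \to \Hom_R(K,D_1) \to \Hom_R(K,D_0) \xrightarrow{\rho} \Hom_R(K,F \otimes_R K) \to 0,$$
into a projective $R/J$-cover of $R/J \otimes_R F$ by applying $R/J \otimes_R -$ and then verifying the cover criterion. Since $F$ is flat and hence $\G$-torsion-free (it is a direct limit of copies of $R$, and $\G$ is finitely generated so $\G$-torsion-free modules are closed under direct limits), Corollary~\ref{C:R/J-tors-free} supplies the identifications $\Hom_R(K,F \otimes_R K) \cong \Delta_u(F)$, $R/J \otimes_R \Delta_u(F) \cong F/JF = R/J \otimes_R F$, and $\Tor^R_1(R/J,\Delta_u(F)) \cong \Tor^R_1(R/J,F) = 0$, so tensoring preserves exactness. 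Moreover each $\Hom_R(K,D_i)$ is a direct summand of $\Hom_R(K,K^{(\bbN)}) \cong \Delta_u(R^{(\bbN)})$, so $P_i := R/J \otimes_R \Hom_R(K,D_i)$ is a direct summand of $(R/J)^{(\bbN)}$ and is therefore projective in $\ModR/J$. The induced surjection $\bar\rho\colon P_0 \twoheadrightarrow R/J \otimes_R F$ is then at least a projective $R/J$-precover.

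To upgrade this to a cover, I would first treat the case of a finitely generated $J = (x_1,\dots,x_n) \in \G$, which suffices since $\G$ has a basis of such ideals. Since $\ucontra$ is an abelian subcategory of $\ModR$ with exact inclusion (Proposition~\ref{P:Delta-left-adjoint}, available because $\pdim R_\G \le 1$), the submodule $J\Hom_R(K,D_0)$---the image of $(x_1,\ldots,x_n)\colon \Hom_R(K,D_0)^n \to \Hom_R(K,D_0)$ taken in $\ucontra$---is itself a $u$-contramodule, so $P_0 \cong \Hom_R(K,D_0)/J\Hom_R(K,D_0)$ lies in $\ucontra$; the same argument applies to $\Hom_R(K, F \otimes_R K)$. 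Given $f\colon P_0 \to P_0$ with $\bar\rho f = \bar\rho$, denote the quotient $\pi\colon \Hom_R(K,D_0) \twoheadrightarrow P_0$: the composite $f\pi$ is a morphism in $\ucontra$, so by projectivity of $\Hom_R(K,D_0)$ in $\ucontra$ it lifts along $\pi$ to $g\colon \Hom_R(K,D_0) \to \Hom_R(K,D_0)$ with $\pi g = f\pi$. A diagram chase forces $\rho g - \rho$ to land inside $J \Hom_R(K, F \otimes_R K)$, and since $\rho$ restricts to an epimorphism $J\Hom_R(K,D_0) \twoheadrightarrow J\Hom_R(K,F\otimes_R K)$ in $\ucontra$, projectivity again produces a lift $h\colon \Hom_R(K,D_0) \to J\Hom_R(K,D_0)$ with $\rho h = \rho g - \rho$. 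Then $\rho(g-h) = \rho$, so $g - h$ is an automorphism of $\Hom_R(K,D_0)$ by the projective-cover property of $\rho$; since $\pi h = 0$, $\pi(g-h) = f\pi$, and hence $g-h$ descends to an automorphism of $P_0$ which equals $f$.

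For a general $J \in \G$, I would pick a finitely generated $J_0 \in \G$ with $J_0 \subseteq J$: the case above, applied to every Bass $R$-module $F'$, shows that every Bass $R/J_0$-module $R/J_0 \otimes_R F'$ admits a projective cover, so by Lemma~\ref{L:Bass-mod-perf}(ii) the ring $R/J_0$ is perfect, whence so is its quotient $R/J$; in particular $R/J \otimes_R F$ admits a projective $R/J$-cover. The main technical obstacle I anticipate is the cover criterion in the second paragraph: both lifts must take place \emph{inside} $\ucontra$, which forces one to verify that the relevant $J$-multiples of $u$-contramodules are again $u$-contramodules---a property that depends crucially on $J$ being finitely generated and on $\ucontra$ being an abelian subcategory of $\ModR$ closed under cokernels.
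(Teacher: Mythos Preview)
Your argument is correct and shares the paper's overall plan: apply $R/J \otimes_R -$ to the $\ucontra$-projective cover of Lemma~\ref{L:p-cov-u-contra}, identify the terms via Corollary~\ref{C:R/J-tors-free}, and verify that the result is a projective $R/J$-cover. The difference lies only in this last verification. You check the \emph{minimality} definition of cover by lifting an endomorphism $f$ of $P_0$ back to $\Hom_R(K,D_0)$, correcting the lift by a map $h$ landing in $J\Hom_R(K,D_0)$, and invoking the cover property of $\rho$; this forces you to show that $J\Hom_R(K,D_0)$ lies in $\ucontra$, which you obtain by taking $J$ finitely generated, and then bootstrap to general $J$ via Lemma~\ref{L:Bass-mod-perf}. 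The paper instead uses the \emph{superfluous-kernel} definition: since every $R/J$-module is a $u$-contramodule (because $U\otimes_R R/J=0$ and $U$ is flat, so $\Ext^i_R(U,-)$ vanishes on $R/J$-modules by change of rings), the superfluousness of $\Hom_R(K,D_1)$ in $\ucontra$ passes first to $R/J \otimes_R \Hom_R(K,D_1)$ as a subobject in $\ucontra$, and then---because every $R/J$-submodule is already a $u$-contramodule---to superfluousness in $\Modr{R/J}$. This handles all $J\in\G$ at once and avoids both the lifting argument and the finitely-generated reduction. (Incidentally, that reduction is unnecessary even in your own approach: $P_0$ is an $R/J$-module and hence a $u$-contramodule regardless of whether $J$ is finitely generated, so $J\Hom_R(K,D_0) = \Ker\big(\Hom_R(K,D_0)\to P_0\big)$ is a $u$-contramodule simply because $\ucontra$ is closed under kernels.)
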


\begin{proof}
By the proof of Lemma~\ref{L:p-cov-u-contra}, \begin{equation}\label{E:three}
0 \to \Hom_R(K,D_1) \to \Hom_R(K,D_0) \overset{\rho}\to \Hom_R(K,F \otimes_R K) \to 0
\end{equation} is a projective cover of $\Hom_R(K,F \otimes_R K)$ in the category of $u$-contramodules. 
Note that a flat $R$-module is $\G$-torsion-free since $\G$ is faithful and the $\G$-torsion-free class is closed under direct limits. 

By Lemma~\ref{L:Delta-tf} $\Hom_R(K,F \otimes_R K)\cong \Delta_u(F)$ and by Lemma~\ref{L:R-J-tens-tf} $F /JF \cong R/J\otimes_R \Delta_u(F)$. Invoking Lemma~\ref{L:Tor-R/J-Gtf} we also get that $\Tor^R_1(R/J, F) \cong \Tor^R_1(R/J, \Delta_u(F))$, for every $J \in \G$.

Thus applying the functor $(R/J\otimes_R -)$ to the sequence (\ref{E:three}) and using Corollary~\ref{C:R/J-tors-free} we obtain the exact sequence
\begin{equation}\label{E:F-FJ-cov}
0 \to R/J  \otimes_R\Hom_R(K,D_1)\to R/J  \otimes_R\Hom_R(K,D_0)\overset{ \id_{R/J}\otimes_R \rho }\to F/JF \to 0
\end{equation}
 We show that (\ref{E:F-FJ-cov}) is a projective cover of $F/JF$ in $\ModR/J$.

Applying Lemma~\ref{L:Delta-tf} to a free module $R^{(\alpha)}$ we get $\Hom_R(K, K^{(\alpha)})\cong \Delta_u(R^{(\alpha)})$
and by Lemma~\ref{L:R-J-tens-tf}, $R/J\otimes_R\Hom_R(K, K^{(\alpha)})\cong (R/J)^{(\alpha)}.$

 Additionally, since the modules $D_0, D_1$ are summands of a direct sum of copies of $K$, Corollary~\ref{C:R/J-tors-free} implies that $ R/J\otimes_R\Hom_R(K,D_i) $ is an $R/J$-projective module for $i=0,1$.

Furthermore, from the projective cover ~\ref{E:three}, we know that $ \Hom_R(K,D_1)$ is a
superfluous subobject of $\Hom_R(K,D_0)$ in $\ucontra$.

We note that $R/J\otimes_R \Hom_R(K,D_1)$ is a superfluous $R/J$-submodule
of $R/J\otimes_R \Hom_R(K,D_0)$.
In fact, $R/J$ is a $u$-contramodule, for any $J\in \G$ and the image of a superfluous subobject under any morphism in the category is a superfluous subobject. Thus,
$R/J\otimes_R \Hom_R(K,D_1)$ is a superfluous subobject of  $R/J\otimes_R \Hom_R(K,D_0)$ in $\ucontra$. Finally, any submodule of
an $R/J$-module is also a $u$-contramodule, hence $R/J\otimes_R \Hom_R(K,D_1)$ is superfluous in $R/J\otimes_R \Hom_R(K,D_0)$ as an $R/J$-submodule.

Thus we  conclude that (\ref{E:F-FJ-cov}) is a $\clP_0(R/J)$-cover of $F/JF$.
\end{proof}

\begin{thm}\label{T:cov-implies-Gperf}
Suppose $R$ is a commutative ring and $(\A, \D_\G)$ a $1$-tilting cotorsion pair. If $\A$ is covering, then $\psi_R\colon R \to R_\G$ is a perfect localisation, $\pdim R_\G \leq 1$, and $R$ is $\G$-almost perfect.
\end{thm}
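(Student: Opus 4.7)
The proof is essentially an assembly of the results already established in Sections~\ref{S:cov-Rg-div} and \ref{S:cov-G-perf}, together with one short reduction argument for the perfectness of the quotient rings $R/J$. My plan is to proceed in four steps, corresponding to the four assertions of the statement.

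First, the conclusion that $\psi_R\colon R \to R_\G$ is a perfect localisation is exactly Lemma~\ref{L:cov-G-perf}: once $\A$ is covering, the functor $q$ is shown to be exact (via the vanishing of $\Ext^2_R(R/J,M)$ for $\G$-closed $M$ and finitely generated $J\in\G$), and $\G$ automatically has a basis of finitely generated ideals by Hrbek's Theorem~\ref{T:Hrb-tilting}, so Stenstr\"om's criterion \cite[Proposition XI.3.4]{Ste75} applies. Second, the inequality $\pdim R_\G \leq 1$ is precisely Proposition~\ref{P:R_G-tilting}.

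Third, the fact that $R_\G$ is a perfect ring is Proposition~\ref{P:Rg-perfect-ring}: one lifts a presentation of any $R_\G$-module to an $\A$-precover, extracts an $\A$-cover as an $R$-module summand, and uses Lemma~\ref{L:G-top-facts}(i) together with flatness of $\psi_R$ to see that everything stays in $\ModR_\G$, giving a projective cover there.

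The only remaining piece is showing that $R/J$ is perfect for every $J\in\G$, and this is the step where some genuine (though small) work is required. The plan is to use the Bass criterion, Lemma~\ref{L:Bass-mod-perf}(ii): it suffices to prove that every Bass $R/J$-module has a projective cover in $\Modr{R/J}$. Given such a Bass module determined by a sequence $\bar a_1,\bar a_2,\dots\in R/J$, I would lift each $\bar a_i$ to an element $a_i\in R$ (using surjectivity of $R \twoheadrightarrow R/J$) and form the corresponding Bass $R$-module $F$ as in (\ref{E:RJ-lift-Bass}); applying $R/J\otimes_R -$ to that presentation recovers (\ref{E:RJ-Bass}), so $F/JF = R/J\otimes_R F$ is exactly the given Bass $R/J$-module. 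By Proposition~\ref{P:R-J-Bass}, the module $R/J\otimes_R F$ admits a $\clP_0(R/J)$-cover. The main potential obstacle here is to verify that the existence of such a lift really hits every Bass $R/J$-module; but because the construction of a Bass module depends only on the chosen sequence of ring elements, and lifting a sequence through a surjective ring map is unobstructed, every Bass $R/J$-module arises from some Bass $R$-module in this way. Consequently Lemma~\ref{L:Bass-mod-perf}(ii) forces $R/J$ to be perfect, and combined with the perfectness of $R_\G$ this gives exactly the definition of $\G$-almost perfect, completing the proof.
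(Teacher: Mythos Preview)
Your proposal is correct and follows essentially the same route as the paper: the paper's proof of Theorem~\ref{T:cov-implies-Gperf} simply cites Lemma~\ref{L:cov-G-perf}, Proposition~\ref{P:R_G-tilting}, Proposition~\ref{P:Rg-perfect-ring}, and Proposition~\ref{P:R-J-Bass} in turn. Your only addition is making explicit the (trivial but necessary) observation that every Bass $R/J$-module arises as $R/J\otimes_R F$ for some Bass $R$-module $F$ by lifting the defining sequence along the surjection $R\twoheadrightarrow R/J$; the paper leaves this implicit in the discussion preceding Proposition~\ref{P:R-J-Bass}, where it starts directly with a sequence of elements of $R$.
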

\begin{proof}
That $\psi_R\colon R \to R_\G$ is a perfect localisation and $\pdim R_\G \leq 1$ are by Lemma~\ref{L:cov-G-perf} and Proposition~\ref{P:R_G-tilting}. That $R$ is $\G$-almost perfect is by Proposition~\ref{P:Rg-perfect-ring} and Proposition~\ref{P:R-J-Bass}.
\end{proof}
\section{$\clH$-h-local rings}\label{S:h-local}
This section concerns a class of rings which includes the commutative local rings and the h-local rings. We will be looking at $\clH$-h-local rings with respect to a linear topology $\clH$ on a commutative ring $R$. The main result of this section is that the $\clH$-h-local rings can be characterised by the properties of the $\clH$-discrete modules, as will be shown in Proposition~\ref{P:tors-decomp}.

 For a commutative ring $R$, we let $\Max R$ denote the set of all the maximal ideals of $R$.

We will formulate in our setting the results from \cite[Section 4]{BP1}, which were proved in the case of a localisation of a ring at a multiplicative subset. All the proofs can be extended easily to the case of a linear topology $\clH$ on a commutative ring $R$.

\begin{defn}
A commutative ring $R$ is \emph{$\clH$-h-local} if for every open ideal $J \in \clH$, $J$ is contained only in finitely many maximal ideals of $R$ and every open prime ideal in $\clH$ is contained in only one maximal ideal.

A ring commutative $R$ is \emph{$\clH$-h-nil} if every element $J \in \clH$ is contained only in finitely many maximal ideals of $R$ and every prime ideal of $R$ in $\clH$ is maximal.
\end{defn}
It is clear that every $\clH$-h-nil ring is $\clH$-h-local. We first give a sufficient condition for a ring to be $\clH$-h-nil. 
\begin{lem}\label{L:R-J-perf-h-nil}
Let $\clH$ be a linear topology on a commutative $R$. If $R/J$ is perfect for every $J \in \clH$, then $R$ is $\clH$-h-nil.
\end{lem}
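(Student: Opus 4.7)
The plan is to verify directly the two conditions in the definition of $\clH$-h-nil, using the characterisation of commutative perfect rings in Proposition~\ref{P:perfect}.

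First I would address the finiteness of the set of maximal ideals containing any $J \in \clH$. By hypothesis, $R/J$ is a perfect commutative ring, so by Proposition~\ref{P:perfect}~(iv) it is semilocal, that is, it has only finitely many maximal ideals. Via the order-preserving bijection between maximal ideals of $R$ containing $J$ and maximal ideals of $R/J$, it follows that $J$ is contained in only finitely many maximal ideals of $R$.

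Next I would show that every prime $\mathfrak{p} \in \clH$ is maximal. Since $\clH$ is a filter (as a linear topology), any ideal of $R$ containing an open ideal is open, so in particular $\mathfrak{p} \in \clH$ implies that $R/\mathfrak{p}$ is a perfect ring by hypothesis. But $R/\mathfrak{p}$ is also an integral domain. By the last statement of Proposition~\ref{P:perfect}, every element of a perfect commutative ring is either a unit or a zero-divisor; in a domain, the only zero-divisor is $0$, so every non-zero element of $R/\mathfrak{p}$ is a unit. Hence $R/\mathfrak{p}$ is a field, which means $\mathfrak{p}$ is maximal.

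Combining these two observations yields exactly the definition of $\clH$-h-nil. The argument is essentially immediate once one invokes Proposition~\ref{P:perfect}; there is no real obstacle to overcome, and I would expect the proof in the paper to be only a few lines long.
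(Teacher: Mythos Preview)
Your proof is correct and essentially identical to the paper's: both parts invoke Proposition~\ref{P:perfect} (semilocality for the first, and the unit-or-zero-divisor dichotomy for the second) in exactly the same way. The only superfluous bit is the filter remark---since $\mathfrak{p}$ is already assumed to lie in $\clH$, no closure property is needed to conclude $R/\mathfrak{p}$ is perfect.
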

\begin{proof}
By Proposition~\ref{P:perfect}, $R/J$ has only finitely many maximal ideals.

Take a prime $\p \in \clH$. Then $R/\p$ is a perfect domain, so is a field (by the final statement of Proposition~\ref{P:perfect}), so it follows that $\p$ must be maximal.
\end{proof}
 Recall that for every right linear topology $\clH$ on a ring $R$ the class of \emph{$\clH$-discrete} modules consist of $\{ M\mid \Ann x \in \clH \text{ for all } x \in M \}$. 

The following holds for any linear topology on a commutative ring, and is our generalisation of \cite[Lemma 4.2]{BP1}. 
\begin{lem} \label{L:RmRn-div}
Let $\clH$ be a linear topology on a commutative ring $R$ such that every prime in $\clH$ is contained in only one maximal ideal. Then for maximal ideals $\m \neq \n$ of $R$, 
and for each $\clH$-discrete module $M$, $M \otimes_R R_\m \otimes_R R_\n=0$.
\end{lem}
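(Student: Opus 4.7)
The plan is to reduce the statement to the case $M = R/J$ with $J \in \clH$, and then argue via prime spectra. Since $M$ is $\clH$-discrete, every cyclic submodule $Rx \subseteq M$ is isomorphic to $R/\Ann(x)$ with $\Ann(x) \in \clH$, so $M$ can be written as a direct limit (in fact a directed union) of cyclic modules of the form $R/J$ with $J \in \clH$. Because the tensor product commutes with direct limits, it suffices to prove that $R/J \otimes_R R_\m \otimes_R R_\n = 0$ for every $J \in \clH$.

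Next I would identify $R_\m \otimes_R R_\n$ with the localisation $S\inv R$ where $S = (R\setminus\m)(R\setminus\n)$, so that the prime spectrum of $R_\m \otimes_R R_\n$ is in bijection with the set of primes $\p$ of $R$ satisfying $\p \subseteq \m \cap \n$. Then the module $R/J \otimes_R R_\m \otimes_R R_\n$ is the localisation of $R/J$ at $S$, which vanishes if and only if $J$ meets $S$, i.e.\ if and only if the ideal generated by $J$ in $R_\m \otimes_R R_\n$ is the unit ideal, equivalently if and only if no prime $\p$ of $R$ satisfies both $J \subseteq \p$ and $\p \subseteq \m \cap \n$.

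Now I would exploit the hypothesis. Suppose for contradiction such a prime $\p$ exists. Since $\clH$ is a filter of ideals (closed under taking supersets) and $J \subseteq \p$ with $J \in \clH$, we have $\p \in \clH$. By assumption every prime in $\clH$ is contained in a unique maximal ideal, but $\p \subseteq \m$ and $\p \subseteq \n$ with $\m \neq \n$, a contradiction. Hence $R/J \otimes_R R_\m \otimes_R R_\n = 0$, and passing to the direct limit gives $M \otimes_R R_\m \otimes_R R_\n = 0$.

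The main potential obstacle is a bookkeeping one: making sure that $\clH$ really is closed under supersets (so that any prime containing an open ideal is itself open), and that the cyclic decomposition of $M$ as a direct limit of $R/J$'s with $J \in \clH$ is set up correctly. Both points are straightforward from the definitions of a linear topology and of an $\clH$-discrete module, so no essential difficulty is expected beyond routine verification.
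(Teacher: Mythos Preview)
Your argument is correct and follows essentially the same route as the paper: reduce to $M=R/J$ with $J\in\clH$, then use the prime spectrum of $R_\m\otimes_R R_\n$ together with the filter property of $\clH$ to rule out any prime $\p\subseteq\m\cap\n$ containing $J$. One minor slip: the cyclic submodules of $M$ are not directed in general, so ``directed union of cyclic modules'' should be replaced by ``epimorphic image of $\bigoplus_{x\in M} R/\Ann_R(x)$'' (then use right-exactness of the tensor product), which is exactly how the paper handles the reduction.
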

\begin{proof}
Let $\phi\colon R \to R_\m \otimes_R R_\n$ denote the localisation map. We will first show that the statement holds for $R/J$ for a fixed $J \in \clH$. Take $\q$ a prime ideal in $R_\m \otimes_R R_\n$. Then there is a unique prime $\p$ of $R$ such that $\p \subseteq \m \cap \n$ and $\q = \p(R_\m \otimes_R R_\n)$. By assumption, $\p \notin \clH$ as it is a prime contained in two maximal ideals. Therefore, $J \nsubseteq \p$ so $J R_\p = R_\p$. 

We will show that for every prime ideal $\q$ of $R_\m \otimes_R R_\n$, the localisation of $R/J\otimes_RR_\m \otimes_R R_\n$ at $\q$ is zero. Fix a prime $\q$ of $R_\m \otimes_R R_\n$ and let $\p=\phi^{-1}(\q)$. Then $R_\p \cong (R_\m \otimes_R R_\n)_\q$ as $R$-modules. Moreover, as we know that $R/J \otimes_R R_\p=0$ by the argument in the first paragraph, we conclude the first statement of the lemma.

The statement now follows easily as every $\clH$-discrete module $N$ is an epimorphic image of modules of the form $\bigoplus_\alpha R/J_\alpha$ with $J_\alpha \in \clH$. 
\[
0= (\bigoplus_\alpha R/J_\alpha) \otimes_R R_\m \otimes_R R_\n \to N \otimes_R R_\m \otimes_R R_\n \to 0
\] 
\end{proof}
The following two propositions are the main results of this section, which generalise \cite[Proposition 4.3 and Lemma 4.4]{BP1}. For the latter, we don't include a proof as it follows analogously from the original proof using Lemma~\ref{L:RmRn-div}, our version of \cite[Lemma 4.2]{BP1}.
\begin{prop}\cite[Proposition 4.3]{BP1}\label{P:tors-decomp}
Suppose $\clH$ is a linear topology over a commutative ring $R$. The following are equivalent. 
\begin{enumerate}
\item[(i)] $R$ is $\clH$-h-local.
\item[(ii)] $N \cong \underset{{\m \in \Max R}}\bigoplus_{} N_\m$ for every $\clH$-discrete module $N$.
\item[(iii)] $N \cong \underset{\substack{\m \in \clH \\ \m \in \Max R}}\bigoplus N_\m$ for every $\clH$-discrete module $N$.
\end{enumerate}
Moreover, the above conditions hold when $R/J$ is a perfect ring for every $J \in \clH$.
\end{prop}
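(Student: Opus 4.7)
The plan is to handle the equivalences (ii)$\Leftrightarrow$(iii), (i)$\Rightarrow$(ii), (ii)$\Rightarrow$(i), and the ``moreover'' clause in turn. First, I would observe that (ii) and (iii) are essentially equivalent: if $\m$ is a maximal ideal not in $\clH$ and $N$ is $\clH$-discrete, then for every $x \in N$ the annihilator $\Ann(x) \in \clH$ cannot be contained in $\m$ (otherwise $\m$ would itself lie in the filter $\clH$), so some element of $\Ann(x)$ acts invertibly on $R_\m$, forcing $N_\m = 0$. Consequently the sums in (ii) and (iii) agree.

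For (i)$\Rightarrow$(ii), I would analyse the canonical map $\phi_N \colon N \to \prod_{\m \in \Max R} N_\m$. Its image lies in the direct sum, since by h-locality each annihilator $\Ann(x) \in \clH$ is contained in only finitely many maximal ideals and thus $x/1$ vanishes outside that finite set. Next, I would check that $\phi_N$ is an isomorphism after localising at any maximal $\n$: by Lemma~\ref{L:RmRn-div}, which applies thanks to the second h-local condition, we have $(N_\m)_\n = N \otimes_R R_\m \otimes_R R_\n = 0$ for $\m \neq \n$, while $(N_\n)_\n = N_\n$. Hence $(\phi_N)_\n$ is an isomorphism for every maximal $\n$, and a morphism of $R$-modules that is an isomorphism after every maximal localisation is itself an isomorphism.

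For (ii)$\Rightarrow$(i), I would specialise (ii) to the $\clH$-discrete cyclic modules $R/J$ and $R/\p$ with $J \in \clH$ and $\p$ an open prime. The isomorphism $R/J \cong \bigoplus_\m (R/J)_\m$ forces the image of $\bar 1$ to have finite support, and since $(R/J)_\m = R_\m/JR_\m$ is nonzero precisely when $J \subseteq \m$, it follows that $J$ lies in only finitely many maximals. For an open prime $\p$, the resulting finite decomposition $R/\p \cong \bigoplus_{\m \supseteq \p} (R/\p)_\m$ is in fact a ring isomorphism onto a finite product of nonzero commutative rings; since $R/\p$ is a domain, exactly one factor can be nonzero, so $\p$ lies in a unique maximal ideal. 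The ``moreover'' clause then follows from Lemma~\ref{L:R-J-perf-h-nil}: perfectness of all quotients $R/J$ with $J \in \clH$ makes $R$ an $\clH$-h-nil ring and therefore a fortiori $\clH$-h-local.

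The main subtlety lies in (ii)$\Rightarrow$(i): one has to promote the abstract $R$-module decomposition of $R/\p$ into a splitting of its prime spectrum, which requires observing that the natural map $R/\p \to \prod_{\m \supseteq \p}(R/\p)_\m$ is a ring homomorphism whose target is, after the first finiteness step, a finite product of rings. Once this is in place, the domain property of $R/\p$ makes the rigidity conclusion immediate.
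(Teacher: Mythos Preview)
Your proof is correct and follows the same overall strategy as the paper, but with two worthwhile simplifications. For (i)$\Rightarrow$(ii), the paper first verifies the canonical map is an isomorphism on cyclic modules $R/J$ and then passes to arbitrary $\clH$-discrete $N$ via a presentation $0\to H\to\bigoplus R/J_\alpha\to N\to 0$ and two applications of the snake lemma; you instead check directly that $(\phi_N)_\n$ is an isomorphism for every maximal $\n$ using Lemma~\ref{L:RmRn-div}, which avoids the d\'evissage entirely. For the prime clause in (ii)$\Rightarrow$(i), the paper tensors the decomposition of $R/\p$ with $R_\p$ and counts dimensions over the residue field $R_\p/\p R_\p$, whereas you observe that the canonical map is a ring homomorphism into a finite product and invoke that a domain has no nontrivial idempotents; both arguments are short, yours is perhaps more conceptual. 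The remaining pieces (the (ii)$\Leftrightarrow$(iii) reduction and the ``moreover'' via Lemma~\ref{L:R-J-perf-h-nil}) match the paper exactly.
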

\begin{proof}
(i) $\Rightarrow$ (ii). We begin by showing that statement (ii) holds for the cyclic modules $R/J$ with $J \in \clH$. By assumption, $J$ is contained in finitely many maximal ideals so in $R/J \to R/J\otimes_RR_\m$, $1+J$ is mapped to a non-zero element of $R/J\otimes_R R_\m$ for only finitely many maximal ideals. Thus there is the following natural monomorphism. 
\[\xymatrix@R=5pt@C=7pt{
\Psi_{R/J}\colon R/J \ar@{^{(}->}[rr]&& \underset{\m \in \Max R }\bigoplus (R/J)_\m &\subseteq \underset{\m \in \Max R }\prod (R/J)_\m \\
\hspace{37pt} r+J \ar@{|->}[rr]&& \underset{\m \in \Max R } \sum(r+J)_\m}
\]
We will show that $\Psi_{R/J}$ is surjective by showing that for every maximal ideal $\n$ of $R$, the localisations $\big(\Psi_{R/J}(R/J)\big)_\n$ and $\big(\underset{\m \in \Max R }\bigoplus (R/J)_\m\big)_\n$ coincide. To begin, if $\n \notin \clH$ is maximal, then for each $J \in \G$, $(R/J)_\n=0$ as there exists an $a \in J \setminus \n$, and it also follows that $ \big ( \bigoplus_{\m \in \Max R } (R/J)_\m \big)_\n=0$. For a maximal ideal $\n \in \clH$, by Lemma~\ref{L:RmRn-div}, $(R/J)_\m \otimes_R R_\n =0$ for $\m \neq \n$. So clearly $\big( \bigoplus_{\m \in \Max R } (R/J)_\m \big)_\n=(R/J)_\n =\Psi_{R/J}(R/J)_\n$, where $(R/J)_\n$ is a submodule of $ \bigoplus_{\m \in \Max R } (R/J)_\m $ so we are done. 

For a $\clH$-discrete module $N$, consider a short exact sequence of the following form where $J _\alpha \in \clH$ and all the modules are $\clH$-discrete as the class of $\clH$-discrete modules is closed under submodules and quotients (that is, it is hereditary pretorsion). 
\begin{equation}\label{E:hello}
0 \to H \to \bigoplus_{\substack{\alpha }} R/J_\alpha \to N \to 0
\end{equation}
Consider the following commuting diagram formed by taking the direct sum of all $\bigoplus_{\substack{\m \in \Max R}}(R_\m \otimes_R -)$ applied to (\ref{E:hello}), and $\psi_H, \psi_N$ the natural maps sending each element to its image in the localisations, which can be seen to be well defined (that is, contained in the direct sum) considering the isomorphism for each $R/J$.
\begin{equation}\label{E:bottle}
\xymatrix{
0 \ar[r] & H \ar[r] \ar[d]^{\psi_H} & \bigoplus_{\substack{\alpha }} R/J_\alpha \ar[r] \ar[d]^\cong & N \ar[r] \ar[d]^{\psi_N} & 0\\
0 \ar[r] & \underset{\m \in \Max R}\bigoplus H_\m \ar[r] & \underset{\alpha}\bigoplus \big( \underset{\m \in \Max R }\bigoplus (R/J_\alpha)_{\m} \big) \ar[r] & \underset{\m \in \Max R} \bigoplus N_\m \ar[r] & 0}
\end{equation}
Thus $\psi_N$ is surjective by the snake lemma applied to (\ref{E:bottle}). Additionally, as also $H$ is $\clH$-discrete, the same argument says that $\psi_H$ is surjective. Thus $\psi_N$ must be an isomorphism again by the snake lemma applied to (\ref{E:bottle}). 

(ii) $\Rightarrow$ (iii). If $J \in \clH$ and $\n$ is a maximal ideal of $R$ not contained in $\clH$ then clearly $J\nsubseteq \n$, hence $(R/J)_\n=0$. Therefore, using that every $\clH$-discrete module $M$ is the image of cyclic $\clH$-discrete modules, $M_\n=0$ for every maximal $\n \notin \clH$.

(iii) $\Rightarrow$ (i). By assumption $R/J \cong \bigoplus_{\m \in \Max R \cap \clH } (R/J)_\m$. The direct sum must be finite as $R/J$ is cyclic. Moreover, if $(R/J)_\n=0$ for $\n$ maximal then $J \nsubseteq \n$. This shows that $J$ is contained in only finitely many maximal ideals. To see that every prime $\p$ of $\clH$ must be contained only in one maximal ideal, suppose $\p\subseteq \m \cap \n$ where $\m \neq \n$ are maximal and consider $R/\p \cong \bigoplus_{\m \in \Max R\cap \clH } (R/\p)_\m$. For every $\p \subseteq \m$, $(R/\p)_\m\otimes_RR_\p\cong R_\p/\p R_\p$ and $R_\p/\p R_\p$ cannot contain two direct sum copies of itself, since it is a field.
\end{proof}

\begin{prop}\cite[Lemma 4.4]{BP1}\label{P:mor-sum-loc}
Let $R$ be a $\clH$-h-local ring. Let $\{M(\m)\}_\m \in \Max R$, $\{N(\m)\}_\m \in \Max R$ be two collections of modules such that $M(\m), N(\m)$ are $R_\m$-modules for each maximal ideal $\m$ of $R$. Suppose the modules $\{M(\m)\}$ are $\clH$-discrete. Then any morphism $\bigoplus_\m M(\m) \to \bigoplus_\m N(\m)$ is a direct sum of $R_\m$-module homomorphisms $M(\m) \to N(\m)$.
\end{prop}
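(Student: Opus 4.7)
The plan is to reduce the statement to the vanishing of all off-diagonal components of $f$. Writing $\iota_{\m}\colon M(\m) \hookrightarrow \bigoplus_{\m'} M(\m')$ and $\pi_{\n}\colon \bigoplus_{\n'} N(\n') \to N(\n)$ for the canonical inclusion and projection, I would consider $f_{\n,\m} := \pi_{\n} \circ f \circ \iota_{\m}\colon M(\m) \to N(\n)$ and show that $f_{\n,\m} = 0$ whenever $\m \ne \n$. Once this vanishing is established, each element of $M(\m)$ is sent by $f$ into the summand $N(\m)$ of $\bigoplus_{\n} N(\n)$, and so $f$ decomposes as $\bigoplus_{\m} f_{\m,\m}$.

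To prove the vanishing, fix $\m \ne \n$ and any $R$-homomorphism $\varphi\colon M(\m) \to N(\n)$. Since $R$ is $\clH$-h-local, every prime in $\clH$ is contained in a unique maximal ideal, so Lemma~\ref{L:RmRn-div} applies to the $\clH$-discrete module $M(\m)$ and gives $M(\m) \otimes_R R_{\m} \otimes_R R_{\n} = 0$; combining this with $M(\m) \otimes_R R_{\m} \cong M(\m)$ (since $M(\m)$ is already an $R_{\m}$-module) yields $M(\m) \otimes_R R_{\n} = 0$. On the other hand, because $N(\n)$ is an $R_{\n}$-module, the natural map $N(\n) \to N(\n) \otimes_R R_{\n}$ is an isomorphism. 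By naturality of $(-) \otimes_R R_{\n}$, the composition $M(\m) \to N(\n) \to N(\n) \otimes_R R_{\n}$ factors through $M(\m) \otimes_R R_{\n} = 0$; since the second arrow is an isomorphism, $\varphi = 0$.

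Finally, each diagonal component $f_{\m,\m}\colon M(\m) \to N(\m)$ is an $R$-homomorphism between two $R_{\m}$-modules, hence automatically $R_{\m}$-linear (since $R \to R_{\m}$ is a ring epimorphism, the $R_{\m}$-action on any $R$-module carrying one is uniquely determined by the underlying $R$-action). The main obstacle is really just the vanishing argument above, and with Lemma~\ref{L:RmRn-div} in hand it reduces to a short naturality observation; finiteness of supports in the direct sums plays no role, since the argument is applied elementwise to each summand of the source.
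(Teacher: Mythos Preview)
Your argument is correct. The paper does not actually include a proof of this proposition; it only remarks that the proof follows analogously from the original argument in \cite[Lemma~4.4]{BP1} using Lemma~\ref{L:RmRn-div}. Your proof carries out precisely this strategy: the key step is the vanishing of the off-diagonal components $f_{\n,\m}$ for $\m\neq\n$, which you obtain from Lemma~\ref{L:RmRn-div} together with the fact that $M(\m)$ is already an $R_\m$-module and $N(\n)$ an $R_\n$-module, and the remaining observations (that $f$ then decomposes as $\bigoplus_\m f_{\m,\m}$ and that each $f_{\m,\m}$ is automatically $R_\m$-linear since $R\to R_\m$ is a ring epimorphism) are routine.
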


\section{When $R$ is a $\G$-almost perfect ring}\label{S:Gperf}
In this section we assume that $(\A, \D_\G)$ is a $1$-tilting cotorsion pair with associated Gabriel topology $\G$ and that $R$ is $\G$-almost perfect (that is $R_\G$ is a perfect ring and $R/J$ is a perfect ring for every $J \in \G$).

The purpose of this section is to show that under these assumptions $\A$ is covering, that is a sort of converse to Theorem~\ref{T:cov-implies-Gperf}.

To prove the next lemma, we recall the following construction. Suppose $M$ is a finitely presented right $R$-module with projective presentation $P_1 \overset{\rho} \to P_0 \to M \to 0$ where $P_0, P_1$ are finitely generated projective modules. The \emph{transpose} of $M$, denoted $\Tr (M)$, is the cokernel of the map $\rho^\ast\colon P^\ast_0 \to P^\ast_1$ where $(-)^\ast := \Hom_R(-, R)$.

If $\G$ is a faithful Gabriel topology, then for every finitely generated ideal $J \in \G$, $0\to R\to R^n\to\Tr (R/J)\to 0$ is a projective resolution of $\Tr (R/J)$ where $n$ is the number of generators of $J$.
\begin{lem}\label{L:fdimRg-0}
Let $R$ be a commutative ring. Suppose $(\A, \D_\G)$ is a $1$-tilting cotorsion pair, $\G$ the associated Gabriel topology and $\fdim R_\G=0$. Then $\G$ arises from a perfect localisation, or equivalently $R_\G$ is $\G$-divisible.

In particular, if $R_\G$ is a perfect ring, then the statement holds.
\end{lem}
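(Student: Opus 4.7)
The strategy is to prove the stronger equivalent statement that $R_\G$ is $\G$-divisible; the ``in particular'' claim then follows at once, since for a perfect ring $\Fdim R_\G = 0$ by Proposition~\ref{P:perfect}, and hence $\fdim R_\G = 0$. Because the Gabriel topology associated to a $1$-tilting class is finitely generated by Theorem~\ref{T:Hrb-tilting}, it suffices to establish $JR_\G = R_\G$ for every finitely generated $J \in \G$.

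Fix such $J = (x_1, \dots, x_n)$ and consider the transpose $\Tr (R/J)$, which by the discussion preceding the lemma admits the projective resolution $0 \to R \mapr{\mu} R^n \to \Tr (R/J) \to 0$, where $\mu(1) = (x_1, \dots, x_n)$. The first step is to verify that $\Tor_1^R(\Tr(R/J), R_\G) = 0$. The long exact sequence of Tor identifies this group with the kernel of the map $R_\G \to R_\G^n$ given by $r \mapsto (\psi_R(x_i) r)_i$, which is precisely the submodule of elements of $R_\G$ annihilated by $J$. Since $\G$ is faithful, $R_\G$ is $\G$-torsion-free, and so this kernel vanishes.

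Tensoring the resolution with $R_\G$ thus yields a short exact sequence
\[
0 \to R_\G \to R_\G^n \to \Tr (R/J) \otimes_R R_\G \to 0,
\]
which exhibits $\Tr (R/J) \otimes_R R_\G$ as an object of $\modr{R_\G}$ of projective dimension at most one. The hypothesis $\fdim R_\G = 0$ then forces $\Tr (R/J) \otimes_R R_\G$ to be $R_\G$-projective, so the displayed sequence splits. Any splitting retraction $R_\G^n \to R_\G$ has the form $(r_i) \mapsto \sum_i y_i r_i$ for elements $y_i \in R_\G$, and the defining property of the splitting forces $\sum_i y_i \psi_R(x_i) = 1$, proving $JR_\G = R_\G$.

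The one delicate point in the argument is the vanishing of $\Tor_1^R(\Tr(R/J), R_\G)$; once this is secured, the transpose construction translates the question of whether $J$ becomes the unit ideal in $R_\G$ into a projectivity question for a finitely presented $R_\G$-module of projective dimension at most one, which the finitistic dimension hypothesis settles directly. No further use is made of the tilting pair beyond the fact, due to Hrbek, that $\G$ has a basis of finitely generated ideals.
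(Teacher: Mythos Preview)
Your proof is correct and follows essentially the same route as the paper: both arguments pass to the transpose $\Tr(R/J)$, establish $\Tor_1^R(\Tr(R/J),R_\G)=0$, tensor the two-term resolution with $R_\G$, and use $\fdim R_\G=0$ to conclude that $\Tr(R/J)\otimes_R R_\G$ is $R_\G$-projective. The only difference is cosmetic: where the paper invokes Hrbek's isomorphism $R_\G\otimes_R R/J\cong\Ext^1_R(\Tr(R/J),R_\G)$ together with the change-of-rings formula for $\Ext$, you read off the relation $\sum_i y_i\psi_R(x_i)=1$ directly from a splitting of the sequence, which is a slightly more hands-on way of extracting the same information.
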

\begin{proof}
For each finitely generated $J \in \G$, \cite[Lemma 3.3]{H} shows that $R_\G \otimes_R R/J \cong \Ext^1_R( \Tr R/J, R_\G)$ and as $\pdim \Tr R/J \leq 1$, Lemma~\ref{L:G-top-facts}(ii) yields $\Tor^R_1(\Tr R/J, R_\G)=0$. Thus applying $( R_\G \otimes_R -)$ to a projective resolution of $\Tr R/J$, we get the following.
\[
0 \to R_\G \to R_\G^n \to R_\G \otimes_R \Tr R/J \to 0
\]
By assumption $\fdim R_\G=0$ so $R_\G \otimes_R \Tr R/J $ is $R_\G$-projective. Next consider the following isomorphism which follows as $\Tor^R_i(\Tr R/J, R_\G)=0$ for $i >0$.
\[
\Ext^1_R(\Tr R/J, R_\G) \cong \Ext^1_{R_\G}(R_\G \otimes_R \Tr R/J , R_\G)=0
\] 
The last module vanishes as $R_\G \otimes_R \Tr R/J $ is $R_\G$-projective, so $R/J \otimes_R R_\G \cong\Ext^1_R(\Tr R/J, R_\G)=0$ for each $J \in \G$ hence $R_\G$ is $\G$-divisible.

If $R_\G$ is a perfect ring, then by Proposition~\ref{P:perfect}, $\Fdim R_\G=0$, so the statement applies.
\end{proof}


\begin{rem}\label{R:pos-ref}\emph{
It was pointed out by Leonid Positselski (via private correspondence) that if $\G$ is a perfect Gabriel topology on a ring $R$ such that the rings $R/J$ are perfect for every $J \in \G$, it follows that $\pdim R_\G \leq 1$. His proof is a generalisation of \cite[Theorem 6.13]{BP1}.}
\end{rem}
The above remark with Lemma~\ref{L:fdimRg-0} allows us to state the following.
\begin{prop}\label{P:G-AP} If $(\A, \D_\G)$ is a $1$-tilting cotorsion pair with associated Gabriel topology $\G$ such that $R$ is $\G$-almost perfect, then $\G$ is a perfect localisation, $\pdim R_\G\leq 1$ and $R_\G\oplus R_\G/R$ is a corresponding $1$-tilting module.
\end{prop}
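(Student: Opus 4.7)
The proof will be a short assembly of results already collected in the excerpt, so the plan is essentially to identify which lemma supplies which conclusion and check that the hypotheses line up.

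First I would deduce that $\G$ is a perfect Gabriel topology. Since $R$ is $\G$-almost perfect, $R_\G$ is a perfect ring, so by Proposition~\ref{P:perfect} its big finitistic dimension vanishes: $\Fdim R_\G = 0$, and in particular $\fdim R_\G = 0$. Lemma~\ref{L:fdimRg-0} applied to the $1$-tilting cotorsion pair $(\A,\D_\G)$ then gives that $R_\G$ is $\G$-divisible, which is exactly the statement that $\G$ arises from a perfect localisation (equivalently, that $\psi_R\colon R \to R_\G$ is a flat ring epimorphism and $\G = \{J \leq R \mid \psi_R(J) R_\G = R_\G\}$).

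Next I would obtain the projective dimension bound. The remaining hypothesis of $\G$-almost perfectness, namely that $R/J$ is a perfect ring for every $J \in \G$, is exactly what is needed to invoke Remark~\ref{R:pos-ref} (Positselski's generalisation of \cite[Theorem 6.13]{BP1}): for a perfect Gabriel topology $\G$ such that each $R/J$ is perfect, one has $\pdim R_\G \leq 1$. Combining this with the previous step, both clauses (a) $\G$ is a perfect Gabriel topology and (b) $\pdim R_\G \leq 1$ hold, which is precisely condition (i) of Proposition~\ref{P:Hrb-5.4} for the $1$-tilting class $\D_\G$.

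Finally, Proposition~\ref{P:Hrb-5.4} gives the equivalence (i) $\Leftrightarrow$ (ii) $\Leftrightarrow$ (iii), so from (i) I conclude (ii): $R_\G \oplus R_\G/R$ is a $1$-tilting module whose associated class is $\D_\G$, i.e.\ a $1$-tilting module for the cotorsion pair $(\A,\D_\G)$. This completes the three assertions.

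The only non-routine step is the appeal to Remark~\ref{R:pos-ref}; since the paper states it as a cited fact (attributed to Positselski and generalising \cite[Theorem 6.13]{BP1}), there is no real obstacle within the scope of this proof. Everything else is a direct chain of implications through Lemma~\ref{L:fdimRg-0} and Proposition~\ref{P:Hrb-5.4}.
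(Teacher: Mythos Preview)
Your proposal is correct and follows essentially the same route as the paper: the paper simply states that Proposition~\ref{P:G-AP} follows from Lemma~\ref{L:fdimRg-0} together with Remark~\ref{R:pos-ref}, and your argument spells out precisely this chain (with the final appeal to Proposition~\ref{P:Hrb-5.4} for the tilting module made explicit).
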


Thus with Proposition~\ref{P:G-AP}, we can consider the following setting:
\begin{set}\label{set:pure-split}
We assume that $(\A, \D_\G)$ is a $1$-tilting cotorsion pair arising from a flat injective ring epimorphism $u\colon R \to U$ such that $\pdim U\leq1$ and $\Gen(U) = \D_\G$ as in the equivalent statements of Proposition~\ref{P:Hrb-5.4}, so that $U\oplus K$ ($K:=U/R$) is the associated $1$-tilting module.
\end{set}
Thus, if $R$ is moreover $\G$-almost perfect, to show that $\A$ is covering it is sufficient to show that $U\oplus K$ is $\Sigma$-pure-split, as then $\A$ is closed under direct limits using Proposition~\ref{P:indlim-sigmapuresplit}. To show that $U\oplus K$ is $\Sigma$-pure-split, the problem naturally divides into two parts: showing that each of $U$ and $K$ are $\Sigma$-pure-split.

\subsection{If $K$ is $\Sigma$-pure split then $U\oplus K$ is $\Sigma$-pure split} \label{SS:pure-Gperf}
Consider a pure exact sequence:\begin{equation} \label{E:pure}
0 \to X \to T \to Y \to 0
\end{equation}
where $T \in \Add (U \oplus K)$.

Then $X, Y \in \D_\G$ as $T \in \D_\G$ and the tilting class is closed under pure submodules, so the sequence vanishes when one applies $(R/J \otimes_R -)$. 

\begin{lem}\label{L:tf-splits}
Let $R$ be as in Setting~\ref{set:pure-split} such that $U$ is a perfect ring. Applying the functor $(- \otimes_R U)$ to sequence (\ref{E:pure}) we find a split exact sequence of projective $U$-modules:
\begin{equation} \label{E:pure-tens-U}
0 \to X \otimes_R U \to T\otimes_R U \to Y \otimes_R U \to 0
\end{equation}
\end{lem}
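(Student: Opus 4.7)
The plan is to verify three things in turn: exactness of (\ref{E:pure-tens-U}), projectivity of the middle term $T\otimes_R U$ as a $U$-module, and splitting of the sequence. The first two are short observations, and the main content will be the splitting.

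Exactness follows at once from the flatness of $U$ over $R$. For projectivity of the middle term, $T\in\Add(U\oplus K)$ yields a decomposition $T\oplus T'\cong (U\oplus K)^{(\alpha)}$ for some $R$-module $T'$ and cardinal $\alpha$. I would first record the key vanishing $K\otimes_R U=0$: applying the flat functor $(-\otimes_R U)$ to $0\to R\to U\to K\to 0$ and using the fact that the multiplication $U\otimes_R U\to U$ is an isomorphism (since $u\colon R\to U$ is a ring epimorphism), the leftmost map becomes an isomorphism, forcing $K\otimes_R U=0$. Tensoring the decomposition above with $U$ over $R$ then exhibits $T\otimes_R U$ as a $U$-module summand of $U^{(\alpha)}$, hence projective.

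The main step is the splitting. I plan to show that (\ref{E:pure-tens-U}) is itself pure exact as a sequence of $U$-modules. For any left $U$-module $N$, the associativity isomorphism $(A\otimes_R U)\otimes_U N\cong A\otimes_R N$ identifies $(-\otimes_U N)$ applied to (\ref{E:pure-tens-U}) with the sequence $0\to X\otimes_R N\to T\otimes_R N\to Y\otimes_R N\to 0$, which is exact by the purity of (\ref{E:pure}) (viewing $N$ as an $R$-module via $u$). Since $T\otimes_R U$ is flat over $U$ and pure quotients of flat modules are flat, $Y\otimes_R U$ is a flat $U$-module. Because $U$ is perfect by assumption, flat $U$-modules are projective by Bass's theorem, so $Y\otimes_R U$ is projective and (\ref{E:pure-tens-U}) splits. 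Finally $X\otimes_R U$ is projective as a direct summand of the projective module $T\otimes_R U$.

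The only subtle point I expect to require care is the transfer of purity from the category of $R$-modules to the category of $U$-modules; the associativity identity above reduces this to the given purity of (\ref{E:pure}) over $R$, and the rest is a standard chain of implications (flat + pure surjection $\Rightarrow$ flat quotient; flat over perfect $\Rightarrow$ projective; projective codomain $\Rightarrow$ split).
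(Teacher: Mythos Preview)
Your proof is correct and follows essentially the same approach as the paper's: both show the tensored sequence is pure exact in $\Modr{U}$, observe that $T\otimes_R U$ is projective over $U$, deduce that $Y\otimes_R U$ is flat and hence projective since $U$ is perfect, and conclude the sequence splits. You supply more detail in places the paper leaves implicit (the vanishing $K\otimes_R U=0$ and the associativity argument transferring purity to $\Modr{U}$), but the strategy is the same.
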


\begin{proof}
The sequence (\ref{E:pure-tens-U}) is an exact sequence in $\ModU$ and it is also pure since (\ref{E:pure}) is pure. Moreover, as $T \in \Add(U \oplus K)$, $T\otimes_RU \in \Add(U)$ and thus it is $U$-projective.

Thus $Y \otimes_R U$ is a flat $U$-module and therefore it is $U$-projective as $U$ is a perfect ring. So the sequence splits in $\ModU$ and hence in $\ModR$. Also note that this implies that $X \otimes_R U$ is flat in $\ModR$.
\end{proof}

From now on $t(M)$ will denote the torsion submodule of a module $M$ with respect to the $\G$-torsion class $\E_\G$.
\begin{lem}\label{L:pure-t-tf}
Let $R$ be as in Setting~\ref{set:pure-split} such that $U$ is a perfect ring and let $X, T, Y$ be as in (\ref{E:pure}). Then
\[
0 \to t(X) \to t(T) \to t(Y) \to 0
\]
is a pure exact sequence.
\end{lem}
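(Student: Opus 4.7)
The plan is to deduce the conclusion from Lemma~\ref{L:tf-splits} by embedding the desired sequence as the top row of a $3\times 3$ diagram whose bottom row is the $U$-tensored sequence from that lemma.

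First I would check that $X, T, Y$ all lie in $\D_\G$: the class $\D_\G$ contains $T$ and is closed under quotients (giving $Y$) and under pure submodules (giving $X$, seen by tensoring the original sequence with $R/J$). For every $M\in\D_\G$, tensoring $0\to R\to U\to K\to 0$ with $M$ shows that $M\otimes_R K=0$ (because $K$ is a directed union of modules annihilated by some $J\in\G$, and $M\otimes_R R/J=0$) and that the natural map $M\to M\otimes_R U$ kills $t(M)$ (because $t(M)\otimes_R U=0$); since $M\otimes_R U$ is $\G$-torsion-free, its kernel is exactly $t(M)$, giving a canonical identification $M/t(M)\cong M\otimes_R U$. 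Combined with Lemma~\ref{L:tf-splits}, this shows that the sequence $0\to X/t(X)\to T/t(T)\to Y/t(Y)\to 0$ is split exact with each term $R$-flat: the modules $X\otimes_R U$ and $Y\otimes_R U$ are $U$-projective, hence $U$-flat, hence $R$-flat because $U$ is $R$-flat.

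Now consider the $3\times 3$ commutative diagram whose columns are $0\to t(M)\to M\to M/t(M)\to 0$ for $M\in\{X,T,Y\}$, whose middle row is the given pure exact sequence, and whose bottom row is the split exact sequence just obtained. A first application of the $3\times 3$ lemma yields exactness of the top row $0\to t(X)\to t(T)\to t(Y)\to 0$. To establish purity, tensor the entire diagram with an arbitrary left $R$-module $N$. The middle row remains exact by purity of the original sequence; the bottom row remains exact since split short exact sequences are preserved by any additive functor; the columns remain exact because each $M/t(M)$ is $R$-flat, so $\Tor^R_1(M/t(M),N)=0$. A second application of the $3\times 3$ lemma then gives exactness of the top row tensored with $N$, for every $N$, which is precisely the purity of $0\to t(X)\to t(T)\to t(Y)\to 0$.

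The main step will be the flatness of the $\G$-torsion-free quotients, which rests on the identification $M/t(M)\cong M\otimes_R U$ for $M\in\D_\G$ together with Lemma~\ref{L:tf-splits}; once this is in place, both the exactness of the torsion sequence and its purity reduce to routine diagram chases via the $3\times 3$ lemma.
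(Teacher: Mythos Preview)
Your proposal is correct and follows essentially the same approach as the paper: both set up the same $3\times 3$ diagram with columns $0\to t(M)\to M\to M\otimes_R U\to 0$ and rely on the flatness of $X\otimes_R U$ coming from Lemma~\ref{L:tf-splits}. The only cosmetic difference is that you invoke the $3\times 3$ lemma twice (once for exactness, once after tensoring with an arbitrary $N$ for purity), whereas the paper uses the snake lemma for exactness and then argues purity by showing directly that the connecting map $\delta\colon \Tor^R_1(t(Y),N)\to t(X)\otimes_R N$ vanishes, using that $t(X)\otimes_R N\to X\otimes_R N$ is injective.
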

\begin{proof}
We claim diagram (\ref{E:square}) has exact rows and exact columns. This is because the bottom row is exact as (\ref{E:pure}) is pure exact, and by the snake lemma and the fact that $X \otimes_R K =0$ as $X$ is $\G$-divisible, forces the top row to be exact.
\begin{equation}\label{E:square}
\xymatrix{
& 0 \ar[d] & 0 \ar[d] & 0 \ar[d] &\\
0 \ar[r] &t(X) \ar[r] \ar[d] & t(T) \ar[r] \ar[d] & t(Y) \ar[r] \ar[d] & 0\\
0 \ar[r] &X \ar[r] \ar[d] & T \ar[r] \ar[d] & Y \ar[r] \ar[d] & 0\\
0 \ar[r] &X \otimes_R U \ar[r] \ar[d] & T\otimes_R U \ar[r] \ar[d] & Y \otimes_R U \ar[r] \ar[d] & 0\\
& 0 & 0 & 0 &}
\end{equation}
To show that the top row is pure exact it is enough to show that, for every $N\in \ModR$, the connection map $\delta\colon \Tor^R_1(t(Y), N) \to t(X) \otimes_R N$ is zero. By Lemma~\ref{L:tf-splits} $X \otimes_R U$ is a flat $U$-module, hence also flat as an $R$-module, thus $\Tor^R_1(X \otimes_R U, N)=0$. We want to show that $\delta =0$. Applying $(- \otimes_R N)$ to the diagram above we obtain:
\[
\xymatrix{
&\Tor^R_1(X \otimes_R U, N)=0 \ar[d]\\
\Tor^R_1(t(Y), N) \ar[d] \ar[r]^\delta & t(X) \otimes_R N \ar[d]^\varepsilon\\
\Tor^R_1(Y, N) \ar[r]^0 & X \otimes_R N}
\]
So $\varepsilon \delta =0$ and as $\varepsilon$ is a monomorphism, $\delta=0$ as required.
\end{proof}

\begin{lem}\label{L:if-t-tf-split}
Let $R$ be as in Setting~\ref{set:pure-split} such that $U$ is a perfect ring.
Suppose that $K$ is $\Sigma$-pure split. Then $U \oplus K$ is $\Sigma$-pure split, that is every pure embedding as in (\ref{E:pure}) splits.
\end{lem}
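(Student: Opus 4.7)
The plan is to exploit the $3\times 3$ diagram (\ref{E:square}) from Lemma~\ref{L:pure-t-tf} by splitting its top row using the $\Sigma$-pure split hypothesis on $K$, splitting its bottom row using Lemma~\ref{L:tf-splits}, and then assembling the resulting two sections into a section of the middle row $0 \to X \to T \to Y \to 0$.

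First I would show that $T$ decomposes as $T = T_1 \oplus T_2$ with $T_1 \in \Add(U)$ torsion-free and $T_2 = t(T) \in \Add(K)$. Writing $T$ as a direct summand of $(U\oplus K)^{(\alpha)} = U^{(\alpha)} \oplus K^{(\alpha)}$ and using that $U$ is $\G$-torsion-free while $K$ is $\G$-torsion, we get that $t(T)$ is a direct summand of $K^{(\alpha)}$ and $T/t(T)$ is a direct summand of $U^{(\alpha)}$. The canonical sequence $0 \to t(T) \to T \to T/t(T) \to 0$ splits because $\Ext^1_R(T/t(T), t(T)) = 0$, since $T/t(T) \in \Add(U) \subseteq \A$ and $t(T) \in \Add(K) \subseteq \D_\G = \A^\perp$. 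Under this decomposition, the bottom row of (\ref{E:square}) reads $0 \to X\otimes_R U \to T_1 \to Y\otimes_R U \to 0$ (using $K\otimes_R U = 0$ and $T_1\otimes_R U \cong T_1$), so Lemma~\ref{L:tf-splits} supplies a section $\sigma_1\colon Y\otimes_R U \to T_1$; the top row $0 \to t(X) \to T_2 \to t(Y) \to 0$ is a pure embedding into a module in $\Add(K)$ by Lemma~\ref{L:pure-t-tf}, hence splits by the $\Sigma$-pure split hypothesis, giving a section $\sigma_2\colon t(Y) \to T_2$.

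To build a section $\sigma\colon Y \to T$ of the surjection $\pi_Y\colon T \twoheadrightarrow Y$, let $p\colon Y \twoheadrightarrow Y\otimes_R U$ denote the natural map, whose kernel is $t(Y)$ (since $Y$ is $\G$-divisible, so by Lemma~\ref{L:G-top-facts}(i) the quotient $Y/t(Y)$ is an $R_\G$-module and coincides with $Y\otimes_R U$). Set $f(y) := \sigma_1(p(y))$ viewed in $T_1 \subseteq T$. Commutativity of the bottom-right square in (\ref{E:square}) forces $p(\pi_Y(f(y))) = p(y)$, so the defect $g(y) := y - \pi_Y(f(y))$ lies in $t(Y)$. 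Define $\sigma(y) := f(y) + \sigma_2(g(y)) \in T_1 \oplus T_2 = T$ and check $\pi_Y(\sigma(y)) = (y - g(y)) + g(y) = y$ using $\pi_Y \circ \sigma_2 = \id_{t(Y)}$, which is precisely the top-row splitting identity.

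The main obstacle is the preliminary decomposition $T = T_1 \oplus T_2$; once this has been established (via the cotorsion-pair vanishing $\Ext^1_R(\A, \D_\G) = 0$), the remainder is a short diagram chase that combines the two auxiliary sections $\sigma_1$ and $\sigma_2$ into the desired section of the middle row, yielding the $\Sigma$-pure-split property of $U \oplus K$.
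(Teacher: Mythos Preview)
Your argument is correct, but it takes a somewhat different route from the paper's. The paper does not decompose $T$; instead it works entirely with $Y$. From the splitting of the top row it extracts $t(Y) \in \Add(K)$, from Lemma~\ref{L:tf-splits} it gets $Y\otimes_R U \in \Add(U)$, and then it observes that the sequence $0 \to t(Y) \to Y \to Y\otimes_R U \to 0$ itself splits (because $K \in U^\perp$, so $\Ext^1_R(Y\otimes_R U, t(Y))=0$). This yields $Y \in \Add(U\oplus K) \subseteq \A$, and since $X \in \D_\G = \A^\perp$ the original sequence splits by a single $\Ext$-vanishing. By contrast, you first decompose $T = T_1 \oplus T_2$ (using the same cotorsion-pair vanishing, just applied to $T$ rather than $Y$) and then manufacture an explicit section $\sigma$ by hand from $\sigma_1$ and $\sigma_2$. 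The paper's approach is shorter and more conceptual---one never needs to write down the section---while yours is more constructive and makes transparent exactly how the two partial sections glue. Both rely on the same ingredients (Lemmas~\ref{L:tf-splits} and~\ref{L:pure-t-tf}, plus $\Ext^1_R(\Add U, \Add K)=0$), so the difference is largely one of packaging.
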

\begin{proof}
Consider a pure exact sequence as in (\ref{E:pure}). We show that in our setting the sequence splits. \\ By our assumption the sequence $0 \to t(X) \to t(T) \to t(Y) \to 0
$ splits, since by Lemma~\ref{L:pure-t-tf} it is pure exact and $t(T)\in \Add K$. So $t(Y) \in \Add K$. Moreover, by Lemma~\ref{L:tf-splits}, $Y\otimes_RU \in \Add(U)$. Since $K\in U^\perp$ the sequence:
\[
0 \to t(Y) \to Y \to Y \otimes_R U \to 0
\]
splits. Thus the sequence (\ref{E:pure}) splits as $X \in \D_\G$.
\end{proof}

Thus, our next aim is to show that in Setting~\ref{set:pure-split} when $R/J$ is perfect for each $J \in \G$, $K$ is $\Sigma$-pure split.
To this end, consider a pure exact sequence
\begin{equation} \label{E:pure-t}
0 \to \overline X \to \overline T \to \overline Y \to 0
\end{equation}
with $\overline T \in \Add(K)$. 
\begin{facts}\label{F:Posit} \emph{The terms in sequence (\ref{E:pure-t}) are $\G$-torsion and $\G$-divisible modules. Hence we can use the category equivalence of Theorem~\ref{T:cat-equiv} between the subcategories of $\G$-torsion $\G$-divisible modules and the $\G$-torsion-free $u$-contramodules via the adjoint functors $\big((-\otimes_R K), \Hom_R(K, -)\big)$. We will show that $\Hom_R(K,\overline Y)$ is a projective object in $\ucontra$ and moreover that the sequence splits in the category of $\G$-torsion-free $u$-contramodules. Thus also the original sequence (\ref{E:pure-t}) splits in the category of $\G$-torsion $\G$-divisible modules.}

\emph{Moreover, we will use that for a $\G$-torsion-free module $N$, (in particular a free module $R^{(\beta)}$), Lemma~\ref{L:Delta-tf} gives an isomorphism $\mu_N: \Hom_R(K, K \otimes_RN)\cong \Delta_u(N)$ and these are $u$-contramodules by Lemma~\ref{L:some-ucontras}. Also we use regularly Lemma~\ref{L:R-J-tens-tf}, that is $M/JM \cong R/J\otimes_R\Delta_u(M)$ for any $R$-module $M$ and every $J \in \G$. Finally, we also recall that with the assumption $\pdim U \leq 1$, $\ucontra$ is an abelian category and the direct summands of modules of the form $\Delta_u(R^{(\beta)})$ for some cardinal $\beta$ are the projective objects in $\ucontra$ as stated in Proposition~\ref{P:Delta-left-adjoint}.} 
\end{facts}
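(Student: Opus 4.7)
The plan is to prove the claim implicit in Facts~\ref{F:Posit}: any pure exact sequence (\ref{E:pure-t}) with $\overline T \in \Add(K)$ splits. Combined with Lemma~\ref{L:if-t-tf-split} this shows $U \oplus K$ is $\Sigma$-pure-split, so $\A$ is closed under direct limits by Proposition~\ref{P:indlim-sigmapuresplit} and hence is covering. The route is to transport (\ref{E:pure-t}) across the equivalence of Theorem~\ref{T:cat-equiv} into $\ucontra$, prove the transported sequence splits there, and push the splitting back.

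First I would verify that $\overline X, \overline T, \overline Y$ all lie in $\E_\G \cap \D_\G$. The $\G$-torsion property follows from $\overline T \in \Add(K)$ together with hereditariness of $\E_\G$. For $\G$-divisibility, $\overline T \in \D_\G$ passes to the quotient $\overline Y$; for $\overline X$ one tensors the pure exact sequence with $R/J$ and uses vanishing of the outer terms to force $R/J \otimes_R \overline X = 0$ for every $J \in \G$. Theorem~\ref{T:cat-equiv} then lets us apply $\Hom_R(K, -)$ termwise to obtain a short exact sequence of $\G$-torsion-free $u$-contramodules, since the equivalence is exact on these subcategories.

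The middle term $\Hom_R(K, \overline T)$ is automatically projective in $\ucontra$: since $\overline T \in \Add(K)$, $\Hom_R(K, \overline T)$ is a direct summand of $\Hom_R(K, K^{(\alpha)}) \cong \Delta_u(R^{(\alpha)})$ by Lemma~\ref{L:Delta-tf}, and Proposition~\ref{P:Delta-left-adjoint}(ii) identifies this as a projective generator. The crux is to show that $\Hom_R(K, \overline Y)$ is also projective in $\ucontra$, equivalently that the epimorphism $\Hom_R(K, \overline T) \twoheadrightarrow \Hom_R(K, \overline Y)$ splits. My strategy is to leverage the hypothesis that $R/J$ is perfect for every $J \in \G$, combined with the h-local decomposition of $\G$-torsion modules from Proposition~\ref{P:tors-decomp} (available by Lemma~\ref{L:R-J-perf-h-nil}): one reduces to each maximal $\m \in \G$ and then runs the argument of Proposition~\ref{P:R-J-Bass} in reverse, using Lemma~\ref{L:R-J-tens-tf} and Corollary~\ref{C:R/J-tors-free} to compare the $R/J$-tensored data in $\ucontra$ with projective covers over the perfect ring $R/J$, thereby producing a projective cover of $\Hom_R(K, \overline Y)$ in $\ucontra$ whose comparison with the presentation inherited from $\Hom_R(K, \overline T)$ forces the latter to split.

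The principal obstacle is precisely this last construction: superfluousness of the kernel must be controlled simultaneously at every level $J \in \G$ and then assembled through the inverse-limit topology defining $\ucontra$. The natural tool is the T-nilpotence of direct sums of covers from Theorem~\ref{T:Xu-sums-cov}(ii), applied after tensoring by basis ideals of $\G$ in the spirit of Proposition~\ref{P:R-J-Bass}, combined with the observation (used at the end of the proof of Proposition~\ref{P:R-J-Bass}) that a superfluous subobject in $\ucontra$ remains superfluous after tensoring with $R/J$. Once projectivity of $\Hom_R(K, \overline Y)$ in $\ucontra$ is secured, the sequence of $u$-contramodules splits and the inverse equivalence $(- \otimes_R K)$ of Theorem~\ref{T:cat-equiv} transfers the splitting back to (\ref{E:pure-t}).
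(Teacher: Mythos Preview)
Your overall architecture matches the paper exactly: verify that the terms of (\ref{E:pure-t}) lie in $\E_\G\cap\D_\G$, transport across the equivalence of Theorem~\ref{T:cat-equiv}, show $\Hom_R(K,\overline Y)$ is projective in $\ucontra$, and pull the splitting back. Your reduction to the local case via the $\G$-h-local decomposition (Lemma~\ref{L:R-J-perf-h-nil} and Proposition~\ref{P:tors-decomp}) is also the paper's route.

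The gap is the local step itself. You propose to ``run the argument of Proposition~\ref{P:R-J-Bass} in reverse'' and to control superfluousness via Theorem~\ref{T:Xu-sums-cov}(ii). Neither of these is what the paper does, and neither is obviously viable. Proposition~\ref{P:R-J-Bass} descends a projective cover in $\ucontra$ to one over $R/J$; reversing this would require lifting a compatible system of $R/J$-projective covers back to a single object of $\ucontra$, and you give no mechanism for that. Theorem~\ref{T:Xu-sums-cov}(ii) is a tool for showing covers are minimal, not for producing them, and it is not invoked anywhere in the paper's treatment of the local case.

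What the paper actually does (Propositions~\ref{P:f-epi} and~\ref{P:f-tensor-iso}) is construct an explicit map $f\colon\Delta_u(R^{(\beta)})\to M$ by choosing $\beta$ so that $(R/\m)^{(\beta)}\cong M/M\m$ and lifting via projectivity of $\Delta_u(R^{(\beta)})$ in $\ucontra$. Surjectivity of $f$ is a Nakayama-type argument (Lemma~\ref{L:div-or-sup}): $M/\Img f$ is simultaneously a $u$-contramodule and $\G$-divisible, hence zero. Injectivity uses that $R/J$ is semiartinian: one runs transfinite induction along a Loewy series of $R/J$ to show each $\id_{R/J}\otimes f$ is an isomorphism, and then the $\G$-separatedness of $\Delta_u(R^{(\beta)})\cong\Hom_R(K,K^{(\beta)})$ (Lemma~\ref{L:HomK-H-sep}) forces $\Ker f\subseteq\bigcap_{J\in\G}\Delta_u(R^{(\beta)})J=0$. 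The Loewy-series induction and the separatedness step are the two ideas your proposal is missing.
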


Before reducing to the local case, we want to make some remarks and results about the module $\Hom_R(K, \overline Y)$.
\begin{rem}\label{R:L-HomKY}\emph{
The aim of the next results will be to show that the $\G$-torsion-free $u$-contramodule $\Hom_R(K, \overline Y)$ with $\overline Y$ from the sequence (\ref{E:pure-t}) is a projective object in $\ucontra$, however most of the results can be generalised to a $\G$-torsion-free $u$-contramodule $M$ such that $M \otimes_R K \in \F_1(R)$. }

\emph{The module $\Hom_R(K, \overline Y)$ satisfies the assumptions on $M$ as $\overline Y \cong \Hom_R(K, \overline Y)\otimes_R K$ by Theorem~\ref{T:cat-equiv} since $\overline Y$ is $\G$-divisible and $\G$-torsion. Furthermore, $\Hom_R(K, \overline Y)\otimes_R K \in \F_1(R)$ as $\overline X, \overline Y \in \F_1(R)$ since $\overline T \in \F_1(R)$ and the sequence (\ref{E:pure-t}) is pure-exact.}

\end{rem}

\begin{lem}\label{L:Tor-ucontra-torsion}
Let $R$ be a ring as in Setting~\ref{set:pure-split} such that $R/J$ is a perfect ring for each $J \in \G$. Suppose $M$ is a $\G$-torsion-free $u$-contramodule such that $M \otimes_R K \in \F_1(R)$ and $L$ is a $\G$-torsion module. Then $\Tor^R_1(L, M)=0$. 
\end{lem}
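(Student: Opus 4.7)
The plan is to reduce to the case $L = R/J$ with $J \in \G$ and then to exploit the hypothesis that $R/J$ is perfect. I will work from the short exact sequence $0 \to M \to M \otimes_R U \to M \otimes_R K \to 0$ of Equation~(\ref{eq:6contra}), valid because $M$ is $\G$-torsion-free. Applying $R/J \otimes_R -$, I invoke: $\Tor^R_2(R/J, M\otimes_R K) = 0$ (since $M \otimes_R K \in \F_1(R)$); $\Tor^R_i(R/J, M \otimes_R U) = 0 = R/J \otimes_R (M \otimes_R U)$ for $i = 0, 1$ (since $M \otimes_R U$ is a $U$-module and $U$ is flat, via the homological formulas); and $R/J \otimes_R (M \otimes_R K) = 0$ (from the $\G$-divisibility of $K$). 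Reading off the resulting long exact Tor sequence yields both $\Tor^R_1(R/J, M) = 0$ and a natural isomorphism $M/JM \cong \Tor^R_1(R/J, M \otimes_R K)$ for each $J \in \G$.

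Next I will show that $M/JM$ is projective over $R/J$. I pick a flat resolution $0 \to F_1 \to F_0 \to M \otimes_R K \to 0$ (which exists because $M \otimes_R K \in \F_1(R)$) and tensor it with $R/J$; combined with the identification of the previous paragraph and the vanishing $R/J \otimes_R (M \otimes_R K) = 0$, this yields a short exact sequence $0 \to M/JM \to F_1/JF_1 \to F_0/JF_0 \to 0$ of $R/J$-modules. The outer two terms are flat over $R/J$ by flat base change, and hence \emph{projective} by the perfection of $R/J$ (Proposition~\ref{P:perfect}). The sequence therefore splits, so $M/JM$ is a direct summand of a projective $R/J$-module and is itself projective.

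The last step concerns $L$. A finitely generated $\G$-torsion module is annihilated by a finite intersection of ideals in $\G$, which is again in $\G$ since a Gabriel topology is closed under finite intersections; so every such $L$ is an $R/J$-module for some $J \in \G$. Choosing a free $R/J$-presentation $0 \to L' \to (R/J)^{(\beta)} \to L \to 0$ and applying the long exact sequence of $\Tor^R_\bullet(-, M)$, the vanishing $\Tor^R_1(R/J, M) = 0$ together with the natural identification $N \otimes_R M = N \otimes_{R/J} M/JM$ for any $R/J$-module $N$ yields $\Tor^R_1(L, M) \cong \Tor^{R/J}_1(L, M/JM)$, and the latter vanishes by the flatness of $M/JM$ over $R/J$. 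A general $\G$-torsion $L$ is the filtered union of its finitely generated $\G$-torsion submodules, so the conclusion propagates because $\Tor^R_1(-, M)$ commutes with filtered colimits.

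The single conceptual step is the projectivity of $M/JM$ over $R/J$, which is the only place where the perfection of $R/J$ enters; the rest is standard $\Tor$ bookkeeping together with the $\G$-torsion-free $u$-contramodule structure of $M$. I do not foresee a significant obstacle beyond assembling these ingredients in the correct order.
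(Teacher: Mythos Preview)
Your proof is correct, but it takes a substantially longer route than the paper's. The key observation you are missing is that your first paragraph already works for an arbitrary $\G$-torsion module $L$, not just for $L=R/J$: the only properties of $R/J$ you invoke there are $\Tor^R_2(R/J,M\otimes_RK)=0$ (which follows from $M\otimes_RK\in\F_1(R)$ for any module in the first variable) and $\Tor^R_i(R/J,M\otimes_RU)=0$ via base change to $U$ (which only uses $R/J\otimes_RU=0$, and this holds for every $\G$-torsion $L$). The paper's proof is precisely that single long exact Tor sequence with $L$ in place of $R/J$, and it ends there.

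In particular, the hypothesis that each $R/J$ is perfect is not actually used in the paper's argument; it appears in the statement only because the lemma is invoked under that standing assumption. Your second and third paragraphs, where perfection is genuinely exploited, are therefore unnecessary for the stated conclusion. That said, the side result you establish along the way---that $M/JM$ is projective over $R/J$---is a correct and potentially useful fact in its own right, obtained from the flat resolution of $M\otimes_RK$; it is simply not needed here. Your reduction of $\Tor^R_1(L,M)$ to $\Tor^{R/J}_1(L,M/JM)$ via the base-change formula (\ref{eq:Tor-form}) and the passage to filtered colimits are also valid, just redundant.
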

\begin{proof}
Fix $M$ and $L$ as in the assumptions and consider the following exact sequence.
\[
0 \to M \to M \otimes_R U \to M\otimes_R K \to 0
\]
Apply $(L \otimes_R -)$ to get the exact sequence 
\[
0 = \Tor^R_2(L, M \otimes_R K) \to \Tor^R_1(L , M) \to \Tor^R_1(L, M \otimes_R U )
\]
Thus as $U$ is flat, we have that $\Tor^R_1(L, M \otimes_R U ) \cong \Tor^U_1( L \otimes_R U , M \otimes_R U)$ which is zero as $L \otimes_R U=0$ since $L$ is $\G$-torsion.
\end{proof}

\subsection{When $R$ is local and $R/J$ is a perfect ring for each $J \in \G$ }\label{SS:local-Gperf}
 In this subsection we will assume that $R$ is a local ring with maximal ideal $\m$.\\

 In the rest of this section we will show that $\Hom_R(K, \overline Y)$ (or a $\G$-torsion-free $u$-contramodule $M$ such that $M \otimes_R K \in \F_1(R)$, see Remark~\ref{R:L-HomKY}) is a projective object in $\ucontra$ using the method of Positselski in \cite[Lemma 8.2 and Theorem 8.3]{Pos5}, although in a much simpler setting. 
 
\begin{rem}\label{R:R-J}{\em
By assumption $R$ is local, so $R/J$ is a perfect local ring for each $J \in \G$. Therefore the maximal ideal of each $R/J$ is T-nilpotent, so by \cite[Lemma 28.3]{AF} it follows that $M \m \ll M$ for every $R/J$-module $M$. Moreover by Proposition~\ref{P:perfect} (iv), every $R/J$-module has a non-zero socle.}
\end{rem}

\begin{lem} \label{L:div-or-sup}
Let $R$ be a commutative local ring with a faithful finitely generated perfect Gabriel topology $\G$ such that $R/J$ is a perfect ring for each $J \in \G$.
Then every non-zero $R$-module $M$ is either in $\D_\G$ or $M \m \neq M$.
\end{lem}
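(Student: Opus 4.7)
The claim is a contrapositive statement: if $M \neq 0$ satisfies $M\m = M$, then $M$ must be $\G$-divisible. So my plan is to fix a nonzero $M$ with $M\m = M$, take an arbitrary $J \in \G$, and show $JM = M$.

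First I would dispose of trivial cases. Since $R$ is local with unique maximal ideal $\m$, every proper ideal of $R$ is contained in $\m$. The filter axiom for Gabriel topologies (closure under supersets) then shows that either $\G = \{R\}$ (in which case $\D_\G = \ModR$ and there is nothing to prove) or $\m \in \G$. So I may assume $\m \in \G$, and moreover that the fixed $J \in \G$ is a proper ideal, hence $J \subseteq \m$.

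The heart of the argument exploits the hypothesis that $R/J$ is perfect. Since $R$ is local and $J \subseteq \m$ is proper, $R/J$ is a local ring with maximal ideal $\m/J$. By Proposition~\ref{P:perfect}(iii), the maximal ideal $\m/J$ of the perfect local ring $R/J$ is $T$-nilpotent. I now view $M/JM$ as an $R/J$-module. By Remark~\ref{R:R-J} (which invokes \cite[Lemma 28.3]{AF}), $T$-nilpotence of $\m/J$ forces $(M/JM)(\m/J)$ to be a superfluous submodule of $M/JM$. But $(M/JM)(\m/J) = (M\m + JM)/JM = M/JM$ by the hypothesis $M\m = M$. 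A module that coincides with one of its own superfluous submodules must be zero (take $H = 0$ in the definition of superfluous), so $M/JM = 0$, i.e.\ $JM = M$.

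Since this holds for every $J \in \G$, we conclude $M \in \D_\G$, contradicting the assumption that $M \notin \D_\G$ and proving the contrapositive. There is no real obstacle in this proof: the only delicate point is the preliminary observation that $\m \in \G$ forces every proper $J \in \G$ to satisfy $J \subseteq \m$, which makes $R/J$ local and puts us in position to apply the $T$-nilpotence of $\m/J$ to the quotient $M/JM$.
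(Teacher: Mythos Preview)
Your proof is correct and follows essentially the same approach as the paper: both rely on Remark~\ref{R:R-J} to conclude that $(M/JM)\m$ is superfluous in $M/JM$ for the relevant $J\in\G$, and then read off the conclusion. The only difference is framing: the paper argues directly (assume $M\notin\D_\G$, pick $J$ with $M/JM\neq 0$, deduce $M\m\subsetneq M$), whereas you run the contrapositive over all $J\in\G$. Your preliminary case split on $\G=\{R\}$ versus $\m\in\G$ is harmless but unnecessary: $J\subseteq\m$ holds for any proper ideal simply because $R$ is local, and $J=R$ gives $JM=M$ trivially.
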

\begin{proof}
Suppose $M$ is not in $\D_\G$. Then there exists $J \in \G$ such that $M/M J \neq 0$. By Remark~\ref{R:R-J} we have the following strict inclusion. 
 \[
(M/MJ) \m = (M\m)/(MJ) \subsetneq M/M J
\]
So it follows that $ M \m \subsetneq M$, as required. 
\end{proof}

\begin{prop}\label{P:f-epi}
Let $R$ be a commutative local ring with a faithful finitely generated perfect Gabriel topology $\G$ such that $R/J$ is a perfect ring for each $J \in \G$.
Let $M$ be a $u$-contramodule. Then there is a cardinal $\beta$ and an epimorphism $f$ that makes the following diagram commute. 
\begin{equation}\label{E:pencil}
 \xymatrix{
&& \Delta_u(R^{(\beta)}) \ar[d]^f \ar[r]^p &(R/\m)^{(\beta) \ar[d]^\cong} \ar[r] &0\\
0 \ar[r] & M \m \ar[r] & M \ar[r] \ar[r]& M/M \m \ar[r] & 0}
\end{equation}
\end{prop}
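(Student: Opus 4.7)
The plan is to use the adjunction $\Delta_u \dashv \iota$ from Proposition~\ref{P:Delta-left-adjoint} to reduce the construction of $f$ to choosing lifts of a basis of $M/M\m$, and then to combine Lemma~\ref{L:div-or-sup} with the vanishing of $u$-divisible $u$-contramodules to promote surjectivity modulo $\m$ to genuine surjectivity.

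One first observes that we may assume $\m \in \G$: if $\G$ contains a proper ideal $J$ then $J \subseteq \m$, so $\m \in \G$ by the filter axiom for Gabriel topologies; otherwise $\G = \{R\}$ forces $U = R$, so $M = \Hom_R(U,M) = 0$ as $M$ is a $u$-contramodule, and the statement is trivial. Set $\beta := \dim_{R/\m}(M/M\m)$, fix an $R/\m$-basis of $M/M\m$, and lift it to a family $(x_i)_{i \in \beta}$ in $M$. The assignment $e_i \mapsto x_i$ defines an $R$-linear map $g\colon R^{(\beta)} \to M$, and since $M$ is a $u$-contramodule, the adjunction in Proposition~\ref{P:Delta-left-adjoint}(i) yields a unique morphism $f\colon \Delta_u(R^{(\beta)}) \to M$ in $\ucontra$ with $g = f \circ \delta_{R^{(\beta)}}$.

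Commutativity of the diagram follows by applying $R/\m \otimes_R -$ to $f$: Corollary~\ref{C:R/J-tors-free}(ii) applied to the $\G$-torsion-free module $R^{(\beta)}$ provides a natural isomorphism $R/\m \otimes_R \Delta_u(R^{(\beta)}) \cong R^{(\beta)}/\m R^{(\beta)} = (R/\m)^{(\beta)}$, and by naturality of $\delta$ the induced map $(R/\m)^{(\beta)} \to M/M\m$ is the $R/\m$-linear extension of the basis lifts, hence the required isomorphism.

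It remains to verify that $f$ is an epimorphism. Let $C := \Coker f$. Since $\ucontra$ is an abelian subcategory of $\ModR$ with exact inclusion (Proposition~\ref{P:Delta-left-adjoint}(i)), $C$ is again a $u$-contramodule. Applying the right-exact functor $R/\m \otimes_R -$ to $\Delta_u(R^{(\beta)}) \overset{f}\to M \to C \to 0$ together with the previous paragraph yields $C/C\m = 0$, that is $C\m = C$. Under the standing hypotheses, $\D_\G = \Gen(U)$ by Remark~\ref{R:pos-ref} combined with Proposition~\ref{P:Hrb-5.4}, so if $C \neq 0$ then Lemma~\ref{L:div-or-sup} forces $C \in \D_\G = \Gen(U)$; hence $C$ is $u$-divisible with $h_u(C) = C$. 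But $\Hom_R(U,C) = 0$ because $C$ is a $u$-contramodule, so $h_u(C) = 0$, contradicting $C \neq 0$. Thus $C = 0$ and $f$ is surjective. The chief subtlety is the naturality step identifying $R/\m \otimes_R f$ with the chosen basis isomorphism; the remainder is a Nakayama-style argument, extended beyond the finitely generated setting through Lemma~\ref{L:div-or-sup} and the crucial fact that $u$-divisible $u$-contramodules vanish.
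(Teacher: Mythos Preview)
Your proof is correct and follows essentially the same approach as the paper: construct $f$ as a morphism in $\ucontra$ from the projective object $\Delta_u(R^{(\beta)})$, then use Lemma~\ref{L:div-or-sup} to see that $\Coker f$ is $\G$-divisible, which forces it to vanish since it is also a $u$-contramodule. The only cosmetic differences are that the paper produces $f$ by lifting along $M\to M/M\m$ using projectivity of $\Delta_u(R^{(\beta)})$ rather than via the adjunction, and that you make the reduction $\m\in\G$ and the identity $\D_\G=\Gen(U)$ explicit where the paper leaves them implicit.
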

\begin{proof} Consider the exact sequence $
0 \to M \m \to M \to M/ M\m \to 0
.$

As $M/M \m$ is an $R/\m$-module, there exists a cardinal $\beta$ such that $(R/\m) ^{(\beta)} \cong M/M \m$. 
Let $p\colon \Delta_u(R^{(\beta)}) \to (R/\m)^{(\beta)}$ be the composition of the the natural projection map  $\Delta_u(R^{(\beta)})\to \Delta_u(R^{(\beta)})/ \Delta_u(R^{(\beta)})\m$ with the isomorphism  $\Delta_u(R^{(\beta)})/ \Delta_u(R^{(\beta)})\m \cong (R/\m)^{(\beta)}$ guaranteed by Lemma~\ref{L:R-J-tens-tf}.

Consider the diagram
\begin{equation}\label{E:pencil}
 \xymatrix{
&& \Delta_u(R^{(\beta)}) \ar[d]^f \ar[r]^p &(R/\m)^{(\beta) \ar[d]^\cong} \ar[r] &0\\
0 \ar[r] & M \m \ar[r] & M \ar[r] \ar[r]^{p_M}& M/M \m \ar[r] & 0}
\end{equation}
where $f$ exists since all the modules in the above diagram are $u$-contramodules and $\Delta_u(R^{(\beta)})$ is a projective object in $\ucontra$. To see that $f$ is an epimorphism, note that as $\Delta_u(R^{(\beta)}) \to M/M \m$ is an epimorphism, it follows that $\Img f + M \m = M$. By \[
(M / \Img f)\m = (M \m + \Img f)/\Img f = M/ \Img f
\] and Lemma~\ref{L:div-or-sup}, it follows that $M/ \Img f$ is $\G$-divisible.
However, $f$ is a map of $u$-contramodules, so also $\Coker f = M / \Img f$ is a $u$-contramodule, thus $M / \Img f $ contains no non-zero $\G$-divisible submodule. We conclude that $M/\Img f =0$, so $f$ is an epimorphism as required. 
\end{proof}
The following proposition uses the results from Section~\ref{S:top}. 
\begin{prop}\label{P:f-tensor-iso}
Let $R$ be a commutative local ring with a faithful finitely generated perfect Gabriel topology $\G$ such that $R/J$ is a perfect ring for each $J \in \G$.
 Then for every $\G$-torsion-free $u$-contramodule $M$ such that $M \otimes_RK \in \F_1(R)$ the morphism $f$ as in (\ref{E:pencil})
is an isomorphism.

In particular, $M$ is a projective object in $\ucontra$.
\end{prop}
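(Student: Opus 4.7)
The plan is to prove that the kernel $L := \Ker f$ of the epimorphism $f\colon \Delta_u(R^{(\beta)}) \to M$ produced by Proposition~\ref{P:f-epi} vanishes, whence $M \cong \Delta_u(R^{(\beta)})$ is a projective object in $\ucontra$ by Proposition~\ref{P:Delta-left-adjoint}(ii). I would begin by recording that $L$ is a $\G$-torsion-free $u$-contramodule: it embeds into $\Delta_u(R^{(\beta)}) \cong \Hom_R(K, K^{(\beta)})$ (Corollary~\ref{C:R/J-tors-free}(i) and Theorem~\ref{T:cat-equiv}), which is $\G$-torsion-free, and it is a $u$-contramodule as the kernel of a morphism in the abelian subcategory $\ucontra$ (abelian because $\pdim U \leq 1$).

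Next I would verify the small point that $\m \in \G$. The case $\G = \{R\}$ gives $K = 0$ and renders the statement vacuous, so $\G$ contains some proper ideal $J$; since $R$ is local, $J \subseteq \m$, and the filter property of $\G$ forces $\m \in \G$. In particular $R/\m$ is a $\G$-torsion module, so Lemma~\ref{L:Tor-ucontra-torsion} applies (using the standing hypothesis $M \otimes_R K \in \F_1(R)$) to yield $\Tor_1^R(R/\m, M) = 0$. Applying $R/\m \otimes_R -$ to the short exact sequence $0 \to L \to \Delta_u(R^{(\beta)}) \to M \to 0$ therefore produces
\[
0 \longrightarrow L/L\m \longrightarrow \Delta_u(R^{(\beta)})/\Delta_u(R^{(\beta)})\m \longrightarrow M/M\m \longrightarrow 0.
\]
Corollary~\ref{C:R/J-tors-free}(ii) identifies the middle term with $(R/\m)^{(\beta)}$, and by the commutativity of the diagram defining $f$ in Proposition~\ref{P:f-epi} the second arrow is precisely the chosen isomorphism onto $M/M\m$. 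Hence $L/L\m = 0$, i.e.\ $L\m = L$.

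The conclusion is then a contramodule-theoretic Nakayama argument. By Lemma~\ref{L:div-or-sup}, the equality $L\m = L$ forces either $L = 0$ or $L \in \D_\G$; but a non-zero $\G$-divisible submodule of a $u$-contramodule would be the image of a non-zero map from some $U^{(\kappa)}$, contradicting $\Hom_R(U, L) = 0$. In particular $L$ itself cannot be $\G$-divisible, so $L = 0$, $f$ is an isomorphism, and $M$ is projective in $\ucontra$. The main conceptual point is the interplay between the Tor-vanishing supplied by the hypothesis $M \otimes_R K \in \F_1(R)$ and the rigidity of $u$-contramodules that precludes non-trivial $\G$-divisible subobjects; the check that $\m \in \G$ is routine once one uses the local hypothesis.
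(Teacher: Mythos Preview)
Your argument is correct and is in fact simpler than the paper's. Both proofs begin the same way, using Lemma~\ref{L:Tor-ucontra-torsion} to get $\Tor^R_1(R/\m,M)=0$, but then they diverge. The paper runs a transfinite induction along a Loewy filtration of each $R/J$ to prove that $\id_{R/J}\otimes_R f$ is an isomorphism for \emph{every} $J\in\G$, and then concludes by $\G$-separatedness of $\Delta_u(R^{(\beta)})\cong\Hom_R(K,K^{(\beta)})$ (Lemma~\ref{L:HomK-H-sep}) that $\Ker f\subseteq\bigcap_{J}\Delta_u(R^{(\beta)})J=0$. You instead work only at the single ideal $\m$, read off $L\m=L$ directly from the diagram defining $f$, and then invoke Lemma~\ref{L:div-or-sup} together with the contramodule condition $\Hom_R(U,L)=0$ to kill $L$. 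This replaces the Loewy-series induction and the separatedness argument by a one-line Nakayama step that the paper had already packaged in Lemma~\ref{L:div-or-sup}. The paper's route does establish slightly more (namely that $R/J\otimes_R f$ is an isomorphism for all $J\in\G$), but that extra information is not used elsewhere.

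Two small remarks on your write-up. First, closure of $\ucontra$ under kernels does not require $\pdim U\leq 1$; it holds for any ring epimorphism (see the line after the definition of $u$-contramodules). Second, your final step implicitly uses $\D_\G=\Gen(U)$; this is indeed available here by Remark~\ref{R:pos-ref} and Proposition~\ref{P:Hrb-5.4}, but alternatively you can argue without it: since $L$ is $\G$-torsion-free, if it were also $\G$-divisible then Lemma~\ref{L:G-top-facts}(i) would make $L$ a $U$-module, whence $L\cong\Hom_U(U,L)=\Hom_R(U,L)=0$ because $u$ is a ring epimorphism.
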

\begin{proof}
Let $\beta$ and $f$ be as in Proposition~\ref{P:f-epi}. 

For every  $J \in \G$, $R/J$ is a perfect local ring, hence $R/J$ is a semiartinian module. Consider a Loewy series $\{J_{\sigma}/J\}_{\sigma<\tau}$ of $R/J$, that is $ J_{\sigma+1}/J_{\sigma}\cong R/\m$ for every $\sigma<\tau$  and $R/J=\bigcup_{\sigma < \tau} J_\sigma/J$.
 
 By Lemma~\ref{L:Tor-R/J-Gtf} and Lemma~\ref{L:Tor-ucontra-torsion} we have $\Tor^R_1(R/\m, \Delta_u(R^{(\beta)}))=0=\Tor^R_1(R/\m, M)$. From the diagram (\ref{E:pencil}) where $f$ is an epimorphism we see that for every ordinal $\sigma$, we have the following commuting diagram.
\[\hspace{-0.3cm}
\xymatrix@C=0.7cm{
&&&0 \ar[d]\\
0 \ar[r] & J_{\sigma}/J \otimes_R \Delta_u(R^{(\beta)}) \ar[r] \ar[d]^{\id_{J_{\sigma}/J} \otimes_R f} & J_{\sigma+1}/J \otimes_R \Delta_u(R^{(\beta)}) \ar[r] \ar[d]^{\id_{J_{\sigma+1}/J} \otimes_R f} & R/\m \otimes_R \Delta_u(R^{(\beta)}) \ar[r] \ar[d]^{\id_{R/\m} \otimes_R f}_\cong & 0\\
0 \ar[r] & J_{\sigma}/J \otimes_R M\ar[r] \ar[d]& J_{\sigma+1}/J \otimes_R M \ar[r] \ar[d] & R/\m \otimes_R M \ar[r] \ar[d]& 0\\
&0&0 &0}
\]
We will first show that $\id_{R/J}\otimes_R f$ is an isomorphism by transfinite induction on $\sigma$. It is clear in the base case of $\sigma =1$. If $\id_{J_{\sigma}/J} \otimes_R f$ is an isomorphism, then by the five-lemma, as the two outer vertical morphisms of the above diagram are isomorphisms, also $\id_{J_{\sigma+1}/J} \otimes_R f$ is an isomorphism. 

Let $\rho<\tau$ be a limit ordinal. By induction, $\id_{J_{\sigma}/J} \otimes_R f$ is an isomorphism for every $\sigma<\rho$. Hence, since the tensor product commutes with direct limits we get  the isomorphism $\id_{J_{\rho}/J} \otimes_R f$. Now:
\begin{align*}
\big( \bigcup_{\sigma < \tau} J_\sigma/J \big) \otimes_R \Delta_u(R^{(\beta)})
&= \bigcup_{\sigma < \tau} \big(J_\sigma/J \otimes_R \Delta_u(R^{(\beta)})\big) \\
& \cong \bigcup_{\sigma < \tau} (J_\sigma/J \otimes_R M) \\
&= \big( \bigcup_{\sigma < \tau} J_\sigma/J \big) \otimes_R M
\end{align*}

As $R/J = \bigcup_\alpha J_\alpha /J$ we have shown that \[\id_{R/J}\otimes_R f\colon R/J\otimes_R\Delta_u(R^{(\beta)}) \cong M/ JM.\] 
Now note that the above isomorphism implies that kernel of $f\colon \Delta_u(R^{(\beta)}) \to M$ is contained in every $\Delta_u(R^{(\beta)})J$, thus $\Ker f \subseteq \bigcap_{J \in \G}\Delta_u(R^{(\beta)})J$. However, as $R^{(\beta)}$ is $\G$-torsion-free we have, by Lemma~\ref{L:Delta-tf}, $\Delta_u(R^{(\beta)})\cong \Hom_R(K,K^{(\beta)})$, which is already $\G$-separated by Lemma~\ref{L:HomK-H-sep}, so $\bigcap_{J \in \G}\Delta_u(R^{(\beta)})J$ vanishes. We conclude that $f$ is an isomorphism.
\end{proof}

\begin{prop}\label{P:local-splits}
Let $R$ be a commutative local ring with a faithful finitely generated perfect Gabriel topology $\G$ such that $R/J$ is a perfect ring for each $J \in \G$. Consider the pure exact sequence with $\overline T \in \Add(K)$. 
\begin{equation}\tag{\ref{E:pure-t}}
0 \to \overline X \to \overline T \to \overline Y \to 0 
\end{equation}
Then the sequence splits. In other words, $K$ is $\Sigma$-pure-split.
\end{prop}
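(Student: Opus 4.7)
My approach is to establish the stronger statement $\overline Y \in \Add(K)$, from which the splitting of (\ref{E:pure-t}) follows at once by orthogonality in the tilting cotorsion pair: since $T = U \oplus K$ is the $1$-tilting module associated to $(\A, \D_\G)$, we have $K \in \Add(T) \subseteq \A \cap \D_\G$, so $\overline Y \in \Add(K) \subseteq \A$, and combined with $\overline X \in \D_\G$ this yields $\Ext^1_R(\overline Y, \overline X) = 0$.

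The route to $\overline Y \in \Add(K)$ is through the equivalence of Theorem~\ref{T:cat-equiv} combined with the projectivity criterion of Proposition~\ref{P:f-tensor-iso}. Setting $M_Y := \Hom_R(K, \overline Y)$, the equivalence places $M_Y$ in the category of $\G$-torsion-free $u$-contramodules and supplies the counit isomorphism $M_Y \otimes_R K \cong \overline Y$, since $\overline Y \in \D_\G \cap \E_\G$ (as recorded in Facts~\ref{F:Posit}). As already observed in Remark~\ref{R:L-HomKY}, the purity of (\ref{E:pure-t}) together with $\pdim_R K \leq 1$ forces $\overline Y \in \F_1(R)$. Hence Proposition~\ref{P:f-tensor-iso} applies and $M_Y$ is a projective object of $\ucontra$.

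By Proposition~\ref{P:Delta-left-adjoint}(ii), $M_Y$ is then a direct summand of some $\Delta_u(R^{(\beta)})$, and Lemma~\ref{L:Delta-tf} identifies $\Delta_u(R^{(\beta)}) \cong \Hom_R(K, K^{(\beta)})$. Applying the inverse equivalence $(-\otimes_R K)$ and invoking the counit isomorphism $\Hom_R(K, K^{(\beta)}) \otimes_R K \cong K^{(\beta)}$ for the $\G$-torsion $\G$-divisible module $K^{(\beta)}$, I realise $\overline Y \cong M_Y \otimes_R K$ as a direct summand of $K^{(\beta)}$, so $\overline Y \in \Add(K)$ as required.

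The substantive content of the proof is concentrated in the projectivity of $M_Y$, which is where the assumption that every $R/J$ is a perfect local ring enters, via the transfinite induction along a Loewy series in Proposition~\ref{P:f-tensor-iso}; the remaining steps above are formal consequences of the tilting framework and the category equivalence.
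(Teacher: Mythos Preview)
Your proof is correct and follows essentially the same route as the paper: both hinge on showing that $\Hom_R(K,\overline Y)$ is a projective object of $\ucontra$ via Proposition~\ref{P:f-tensor-iso} and Remark~\ref{R:L-HomKY}. The paper concludes by applying $\Hom_R(K,-)$ to the whole sequence (\ref{E:pure-t}), splitting in $\ucontra$ by projectivity of the last term, and transferring the splitting back via $(-\otimes_R K)$, whereas you deduce $\overline Y\in\Add(K)$ explicitly and invoke tilting orthogonality; these are equivalent endgames, and your intermediate conclusion $\overline Y\in\Add(K)$ is in fact what the paper uses (implicitly, via the splitting) in the global case of Proposition~\ref{P:K-pure-split}.
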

\begin{proof}

By Proposition~\ref{P:f-tensor-iso} and Remark~\ref{R:L-HomKY}, we have that $M = \Hom_R(K,\overline Y)$ is a projective object in $\ucontra$, therefore the following sequence of $u$-contramodules (which is $\Hom_R(K,-)$ applied to (\ref{E:pure-t})) splits.
\[
0 \to \Hom_R(K,\overline X) \to \Hom_R(K,\overline T) \to \Hom_R(K,\overline Y) \to 0 
\]
Applying $(- \otimes_R K)$, we recover the original short exact sequence up to isomorphism, which also splits. 
\end{proof}

\subsection{Final results}\label{SS:results-Gperf}
We have shown that for $R$ a commutative local ring, if $R/J$ is perfect for every $J \in \G$, a pure submodule of a module $\overline T \in \Add(K)$ splits. We will now extend this to the global case in this final subsection.

We recall that since $R/J$ is perfect for each $J \in \G$, by Lemma~\ref{L:R-J-perf-h-nil} the ring $R$ is $\G$-h-nil hence, the equivalent statements of Proposition~\ref{P:tors-decomp} hold. That is, we use in particular that for every $\G$-torsion module $M$, $M \cong \underset{{\m \in \Max R}} \bigoplus M_\m$, where $\m$ runs over all the maximal ideals of $R$.

\begin{prop}\label{P:K-pure-split}
Let $R$ be a commutative ring with a faithful finitely generated perfect Gabriel topology $\G$ with perfect localisation $u\colon R \to U$ such that $R/J$ is a perfect ring for each $J \in \G$. Then $K$ is $\Sigma$-pure-split where $K = U/u(R)$. 
\end{prop}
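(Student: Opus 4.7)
The plan is to decompose the pure exact sequence using the $\G$-h-local structure of $R$ and reduce to the local case established in Proposition~\ref{P:local-splits}. Consider a pure exact sequence $0 \to \overline X \to \overline T \to \overline Y \to 0$ with $\overline T \in \Add(K)$. Since $K$ is both $\G$-torsion and $\G$-divisible, so is $\overline T$; by purity these properties pass to $\overline X$ and $\overline Y$ ($\G$-torsion is hereditary, $\D_\G$ is closed under epimorphic images, and $\G$-divisibility is preserved in pure submodules of $\G$-divisible modules).

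By Lemma~\ref{L:R-J-perf-h-nil}, the hypothesis that $R/J$ is perfect for every $J \in \G$ gives that $R$ is $\G$-h-nil, hence $\G$-h-local. Proposition~\ref{P:tors-decomp} therefore yields canonical decompositions
\[
\overline X \cong \bigoplus_{\m} \overline X_\m,\qquad \overline T \cong \bigoplus_{\m} \overline T_\m,\qquad \overline Y \cong \bigoplus_{\m} \overline Y_\m,
\]
indexed by maximal ideals $\m \in \G \cap \Max R$. Proposition~\ref{P:mor-sum-loc} tells us that the morphisms in our sequence split compatibly into direct sums of morphisms between the $\m$-components, producing for each $\m$ a pure exact sequence $0 \to \overline X_\m \to \overline T_\m \to \overline Y_\m \to 0$ of $R_\m$-modules (purity is preserved by the flat base change along $R \to R_\m$).

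For each $\m \in \G \cap \Max R$, localising $u\colon R \to U$ at $\m$ gives a flat injective ring epimorphism $u_\m\colon R_\m \to U_\m$, with $K_\m = U_\m/R_\m$ and $\pdim_{R_\m} U_\m \leq 1$. Since $K = \bigoplus_\m K_\m$, we have $\overline T_\m \in \Add_{R_\m}(K_\m)$. The associated Gabriel topology $\G^{R_\m}$ on $R_\m$ is faithful, finitely generated, and perfect. Crucially, for each $I \in \G^{R_\m}$ the quotient $R_\m/I$ is perfect: one reduces to $I$ finitely generated, shows that $I = J R_\m$ for a suitable $J \in \G$, and observes that $R_\m/I \cong (R/J)_\m$ is a direct summand (hence a quotient) of the perfect ring $R/J$, so it is itself perfect. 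The hypotheses of Proposition~\ref{P:local-splits} thus transfer to $R_\m$, and each $\m$-component sequence splits. The original sequence, being a direct sum of split sequences, splits as well.

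The main technical obstacle is verifying that every finitely generated $I \in \G^{R_\m}$ arises as $J R_\m$ for some $J \in \G$. Taking representatives $a_i \in R$ for generators of $I$ gives $J_0 = (a_1, \ldots, a_n)R$ with $J_0 R_\m = I$, but $J_0$ need not lie in $\G$, since $J_0 U = U$ requires $J_0 U_\n = U_\n$ for every maximal $\n$ of $U$, whereas we only know $(J_0 U)_\m = U_\m$. The fix is to enlarge $J_0$ without changing its image in $R_\m$ by adjoining elements that force $\G$-membership; this uses the $\G$-h-nil structure, in particular that every prime in $\G$ is maximal and that $R/J$ has only finitely many maximal ideals (all in $\G$) for each $J \in \G$, so the obstruction to lying in $\G$ is controlled at finitely many maximals distinct from $\m$ and can be eliminated by adjoining appropriate units of $R_\m$.
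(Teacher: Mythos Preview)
Your approach is the same as the paper's: decompose into $\m$-local components via Propositions~\ref{P:tors-decomp} and~\ref{P:mor-sum-loc}, then invoke Proposition~\ref{P:local-splits} at each maximal ideal. The paper is terser and simply asserts that Proposition~\ref{P:local-splits} applies (concluding $(\overline Y)_\m \in \Add(K_\m)$, hence $\overline Y \in \Add(K)$, hence the sequence splits since $\overline X \in \D_\G$); you are more careful about transferring the hypotheses to $R_\m$, which is a reasonable instinct.

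Your final paragraph, however, overcomplicates this transfer and does not quite succeed. You aim for equality $I = J R_\m$ with $J \in \G$, but \emph{containment} $J R_\m \subseteq I$ already suffices: then $R_\m/I$ is a quotient of $(R/J)_\m$, which is a direct factor of the perfect ring $R/J$ and hence perfect, and quotients of perfect rings are perfect. Containment is immediate once you note that for $I \in \G^{R_\m}$ the module $R_\m/I$ is $\G^{R_\m}$-torsion, and since $M \otimes_{R_\m} U_\m \cong M \otimes_R U$ for any $R_\m$-module $M$, it is also $\G$-torsion as an $R$-module; thus some $J \in \G$ annihilates $1+I$, giving $J R_\m \subseteq I$. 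By contrast, your proposed fix---enlarging $J_0$ using the $\G$-h-nil structure---is not justified as written: the finiteness you invoke (``the obstruction is controlled at finitely many maximals'') is a property of ideals already in $\G$, and you have not shown it for $J_0 \notin \G$.
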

\begin{proof}
Take $0 \to \overline X \to \overline T \overset{\rho}\to \overline Y \to 0$ a pure exact sequence. By Proposition~\ref{P:tors-decomp}, $\overline T = \bigoplus_\m (\overline T)_\m$ and $\overline Y = \bigoplus_\m (\overline Y)_\m$. Additionally by Proposition~\ref{P:mor-sum-loc}, the morphism $\rho$ is a direct sum of surjective maps $(\overline T)_\m \to (\overline Y)_\m$ and is also a pure epimorphism. By Proposition~\ref{P:local-splits}, each $(\overline Y)_\m$ is in $\Add(K)_\m$, thus also $\overline Y \in \Add(K)$. Thus $\rho$ is a split epimorphism as $\overline X \in \D_\G$.
\end{proof}

\begin{thm} \label{T:tilting-cover}
Let $R$ be a commutative ring and $(\A, \D_\G)$ a $1$-tilting cotorsion pair with associated Gabriel topology $\G$ such that $R$ is $\G$-almost perfect. Then $R_\G \oplus R_\G/R$ is an associated $1$-tilting module and $R_\G \oplus R_\G/R$ is $\Sigma$-pure-split, so $\A$ is closed under direct limits.
\end{thm}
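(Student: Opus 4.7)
The plan is to assemble the theorem directly from the machinery built up in Sections~\ref{S:Gperf} and earlier. First, observe that the hypothesis ``$R$ is $\G$-almost perfect'' means precisely that $R_\G$ is a perfect ring and $R/J$ is a perfect ring for every $J\in \G$. By Proposition~\ref{P:G-AP}, this immediately gives that $\G$ arises from a perfect localisation, $\pdim R_\G \leq 1$, and $R_\G \oplus R_\G/R$ is a $1$-tilting module associated to $(\A,\D_\G)$; in particular, we may set $u\colon R\to U:=R_\G$, $K:=U/R$ and work inside Setting~\ref{set:pure-split}.

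Next I would prove the $\Sigma$-pure-split assertion by separating the ``torsion-free'' and ``torsion'' parts of a pure embedding with middle term in $\Add(U\oplus K)$. Given such a pure exact sequence $0\to X\to T\to Y\to 0$, the results of Subsection~\ref{SS:pure-Gperf} apply: since $U=R_\G$ is perfect, Lemma~\ref{L:if-t-tf-split} reduces the problem to showing that $K$ is $\Sigma$-pure-split, because that lemma explicitly states that the full sequence splits provided the associated pure sequence of $\G$-torsion submodules $0\to t(X)\to t(T)\to t(Y)\to 0$ splits in $\Add(K)$.

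To establish that $K$ is $\Sigma$-pure-split, I would invoke Proposition~\ref{P:K-pure-split}, whose hypotheses are exactly ours: a commutative ring with a faithful finitely generated perfect Gabriel topology $\G$ and $R/J$ perfect for each $J\in \G$. This proposition globalises the local result Proposition~\ref{P:local-splits} via the $\G$-h-nil decomposition of $\G$-torsion modules given by Proposition~\ref{P:tors-decomp} (applicable because $R/J$ perfect implies $R$ is $\G$-h-nil by Lemma~\ref{L:R-J-perf-h-nil}) together with Proposition~\ref{P:mor-sum-loc}.

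Combining these two steps shows that $U\oplus K$ is $\Sigma$-pure-split. Finally, by Proposition~\ref{P:indlim-sigmapuresplit}, this is equivalent to $\A={}^{\perp_1}(U\oplus K)^\perp$ being closed under direct limits, completing the proof. There is no real obstacle here since every ingredient has already been isolated in earlier sections; the statement is essentially the bookkeeping ``glue'' theorem tying together Proposition~\ref{P:G-AP}, Lemma~\ref{L:if-t-tf-split}, Proposition~\ref{P:K-pure-split}, and Proposition~\ref{P:indlim-sigmapuresplit}.
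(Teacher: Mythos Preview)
Your proposal is correct and follows essentially the same route as the paper: invoke Proposition~\ref{P:G-AP} (which the paper unpacks into Lemma~\ref{L:fdimRg-0} and Remark~\ref{R:pos-ref}) to place us in Setting~\ref{set:pure-split}, then combine Lemma~\ref{L:if-t-tf-split} with Proposition~\ref{P:K-pure-split} to obtain $\Sigma$-pure-splitness of $R_\G\oplus R_\G/R$, and conclude with Proposition~\ref{P:indlim-sigmapuresplit}. The only cosmetic difference is that you additionally narrate the internal mechanism of Proposition~\ref{P:K-pure-split}, which the paper's proof does not repeat.
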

\begin{proof}
By Lemma~\ref{L:fdimRg-0}, $R_\G$ is $\G$-divisible, so $\G$ is a perfect Gabriel topology. Next, if the $R/J$ are perfect rings for $J \in \G$ and $\G$ is a perfect Gabriel topology, it follows that $\pdim R_\G \leq 1$ by Remark~\ref{R:pos-ref}.

That $R_\G \oplus R_\G/R$ is $\Sigma$-pure-split is a combination of Lemma~\ref{L:if-t-tf-split} and Proposition~\ref{P:K-pure-split}. Finally by Proposition~\ref{P:indlim-sigmapuresplit} we conclude that $\A$ is closed under direct limits.
\end{proof}
 The following definition has been introduced in \cite{Pos5} (see also \cite{BP4}).
\begin{defn} A linearly topological ring is \emph{pro-perfect} if it is separated, complete, and has a basis of neighbourhoods of zero formed by two-sided ideals such that all of its discrete quotient rings are perfect. 
\end{defn}
Finally combining the above theorem with the results in Section~\ref{S:Gperf} and Section~\ref{S:cov-G-perf} we obtain the main result of this paper. 

\begin{thm}\label{T:characterisation-cov}
Suppose $(\A, \D_\G)$ is a $1$-tilting cotorsion pair, $\G$ the associated Gabriel topology and $\mathfrak R$ the topological ring $\End_R(K)$. The following are equivalent.
\begin{enumerate}
\item[(i)] $\A$ is closed under direct limits.
\item[(ii)] $\A$ is covering.
\item[(iii)] $R$ is $\G$-almost perfect.
\item[(iv)] $R_\G$ is a perfect ring, and $\mathfrak{R}$ is pro-perfect.
\end{enumerate} 
Moreover, if these equivalent conditions hold $R \to R_\G$ is a perfect localisation and $\pdim R_\G \leq 1$.
\end{thm}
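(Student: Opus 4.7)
The plan is to run the cycle (ii)$\Rightarrow$(iii)$\Rightarrow$(i)$\Rightarrow$(ii) using the results already assembled in the paper, and to handle the equivalence with (iv) separately via the topological ring $\mathfrak R = \End_R(K)$. Indeed, (ii)$\Rightarrow$(iii) is exactly Theorem~\ref{T:cov-implies-Gperf} (the combined output of Sections~\ref{S:cov-Rg-div} and~\ref{S:cov-G-perf}), and (iii)$\Rightarrow$(i) is Theorem~\ref{T:tilting-cover}. For (i)$\Rightarrow$(ii) I would invoke Enochs' theorem: every $1$-tilting cotorsion pair is complete, so $\A$ is always special precovering, and a precovering class that is closed under direct limits is automatically covering. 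The last assertion that $R\to R_\G$ is a perfect localisation with $\pdim R_\G\le 1$ then follows either from (ii) via Lemma~\ref{L:cov-G-perf} and Proposition~\ref{P:R_G-tilting}, or from (iii) via Proposition~\ref{P:G-AP}.

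The substantive new work is therefore (iii)$\Leftrightarrow$(iv), and the bridge is the topological description of $\mathfrak R$. By Proposition~\ref{P:same-topologies}, the finite topology on $\mathfrak R$ coincides with the $\G$-topology, which has basis $\{\mathfrak R J : J\in\G\}$; and $\mathfrak R$ is already separated and complete in the finite topology by the general facts recalled in Section~\ref{S:top}. The key algebraic identity, extracted from the proof of Proposition~\ref{P:same-topologies}, is the ring isomorphism $\mathfrak R/\mathfrak R J\cong R/J$ for every $J\in\G$, obtained from $\mathfrak R\cong\Delta_u(R)$ (Lemma~\ref{L:Delta-tf}) together with Corollary~\ref{C:R/J-tors-free}(ii).

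Granting these preliminaries, the equivalence (iii)$\Leftrightarrow$(iv) becomes essentially formal. Suppose (iii). Since $\mathfrak R$ is commutative by \cite[Lemma 4.1]{BP3}, every ideal is two-sided; the basis $\{\mathfrak R J\}_{J\in\G}$ consists of such ideals with $\mathfrak R/\mathfrak R J\cong R/J$ perfect by hypothesis; and any other open ideal $I$ contains some basic $\mathfrak R J$, so $\mathfrak R/I$ is a quotient of the perfect ring $R/J$ and hence itself perfect (by, e.g., Proposition~\ref{P:perfect}(iii)). Together with separation and completeness, this shows $\mathfrak R$ is pro-perfect, so (iv) holds. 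Conversely, assuming (iv), each $\mathfrak R J$ is open, and therefore $R/J\cong\mathfrak R/\mathfrak R J$ is a discrete quotient of the pro-perfect ring $\mathfrak R$ and is perfect; coupled with the hypothesis that $R_\G$ is perfect, this gives (iii).

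The point that requires genuine care, rather than being a serious obstacle, is the identification $\mathfrak R/\mathfrak R J \cong R/J$: this relies on $\nu_R\colon R \to \mathfrak R$ being dense in the appropriate sense, which was the content of the calculation in Proposition~\ref{P:same-topologies} using $\Delta_u(R)\cong \mathfrak R$. Once this is in hand, everything else is topological bookkeeping plus the already established cycle (ii)$\Rightarrow$(iii)$\Rightarrow$(i)$\Rightarrow$(ii).
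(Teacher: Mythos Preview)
Your proposal is correct and follows essentially the same route as the paper: the cycle (i)$\Rightarrow$(ii)$\Rightarrow$(iii)$\Rightarrow$(i) via Enochs/Xu, Theorem~\ref{T:cov-implies-Gperf}, and Theorem~\ref{T:tilting-cover}, and the equivalence (iii)$\Leftrightarrow$(iv) via the ring isomorphism $\mathfrak R/\mathfrak R J\cong R/J$. The one step you gloss over, which the paper makes explicit, is that Proposition~\ref{P:same-topologies} (and indeed the identification $\mathfrak R\cong\Delta_u(R)$ via Lemma~\ref{L:Delta-tf}) requires $\G$ to be a \emph{perfect} Gabriel topology; the paper secures this by observing that both (iii) and (iv) include the hypothesis that $R_\G$ is perfect, whence Lemma~\ref{L:fdimRg-0} forces $\G$ to arise from a perfect localisation, and only then does your bridge apply.
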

\begin{proof}
(i) $\Rightarrow$ (ii) is \cite[Theorem 2.2.8]{Xu} and \cite[Theorem 6.11]{GT12}. 

(ii) $\Rightarrow$ (iii) is Theorem~\ref{T:cov-implies-Gperf}. 

(iii) $\Rightarrow$ (i) is Theorem~\ref{T:tilting-cover}.

(iii) $\Leftrightarrow$ (iv) In both statements, $R_\G$ is perfect, so by Lemma~\ref{L:fdimRg-0}, $\G$ is a perfect localisation. Hence by Proposition~\ref{P:same-topologies}, $\mathfrak{R}$ is closed and separated with respect to the $\G$-topology. Also by Lemma~\ref{L:Delta-tf} and Lemma~\ref{L:R-J-tens-tf}, $\mathfrak{R}/\mathfrak{R}J \cong R/J$, so the discrete quotient rings of $\mathfrak{R}$ are perfect if and only if the $R/J$ are perfect for each $J \in \G$.

The final statements follow by Theorem~\ref{T:tilting-cover}.
\end{proof}

The following is an application of Theorem~\ref{T:characterisation-cov} (along with \cite[Theorem 8.7]{BLG}) which allows us to characterise all the $1$-tilting cotorsion pairs over a commutative semihereditary domain (for example, for the category of abelian groups) such that $\A$ is covering. 
\begin{expl}
Let $R$ be a commutative semihereditary ring and $(\A, \T)$ a $1$-tilting cotorsion pair in $\ModR$ with associated Gabriel topology $\G$. Then by \cite[Theorem 5.2]{H}, $\G$ is a perfect Gabriel topology. Moreover, $R/J$ is a coherent ring for every finitely generated $J \in \G$, so $R/J$ is perfect if and only if $R/J$ is artinian, \cite[Theorem 3.3 and 3.4]{Ch}. As $R/J$ is artinian, there are finitely many (finitely generated) maximal ideals and the Jacobson radical of $R/J$ is a nilpotent ideal. Therefore in this case, $\G$ has a subbasis of ideals of the form $\{\m^{k} \mid \m \in \Max R \cap \G, k \in \bbN \}$ and moreover all the maximal ideals of $R$ contained in $\G$ are finitely generated.

Moreover, if $R$ is a commutative semihereditary ring, the classical ring of quotients $Q(R)$ is Von Neumann regular. By \cite[Example XI.4.2]{Ste75}, the classical ring of quotients coincides with the maximal flat epimorphic ring of quotients $Q_\text {tot}(R)$(see \cite[Section XI.4]{Ste75}). Thus, for a $1$-tilting cotorsion pair $(\A, \T)$ as in the previous paragraph, $R \to R_\G$ is a perfect localisation (and a monomorphism) so by \cite[Proposition XI.4.1]{Ste75} $R \to Q(R)$ factors through a ring monomorphism $R_\G \to Q(R)$. It follows that if $R_\G$ is perfect then $R_\G$ coincides with its classical ring of quotients, so $R_\G = Q(R_\G)=Q(R)$. Thus if $\A$ provides covers, the $1$-tilting cotorsion pair is $(\A, Q(R)/R^\perp)$ and moreover $Q(R)$ is a semisimple ring since it is Von Neumann regular and perfect. 


In particular, in the case of $R = \bbZ$ Theorem 8.7 in \cite{BLG} implies that every $1$-tilting class $\T$ is enveloping as $\bbZ$ is semihereditary and for any proper ideal $a\bbZ$ of $\bbZ$, $\bbZ/a\bbZ$ is artinian. 

On the other hand, the only $1$-tilting cotorsion pair in $\Mod \bbZ$ that provides for covers is $(\A, \bbQ^\perp)$, that is the $1$-tilting cotorsion pair associated to the $1$-tilting module $\bbQ \oplus \bbQ/\bbZ$. 

\end{expl}

 \bibliographystyle{alpha}
\bibliography{references}

\begin{thebibliography}{AH{\v{S}}T18}

\bibitem[AF74]{AF}
F.W. Anderson and K.R. Fuller.
\newblock {\em Rings and categories of modules}.
\newblock Graduate texts in mathematics. Springer-Verlag, 1974.

\bibitem[AHS11]{AS}
Lidia Angeleri~H{\"u}gel and Javier S{\'a}nchez.
\newblock Tilting modules arising from ring epimorphisms.
\newblock {\em Algebr. Represent. Theory}, 14(2):217--246, 2011.

\bibitem[AH{\v{S}}T18]{AST17}
Lidia Angeleri~H\"{u}gel, Jan {\v{S}}aroch, and Jan Trlifaj.
\newblock Approximations and {M}ittag-{L}effler conditions the applications.
\newblock {\em Israel J. Math.}, 226(2):757--780, 2018.

\bibitem[Bas60]{Bass}
Hyman Bass.
\newblock Finitistic dimension and a homological generalization of semi-primary
  rings.
\newblock {\em Trans. Amer. Math. Soc.}, 95:466--488, 1960.

\bibitem[BH08]{BH08}
Silvana Bazzoni and Dolors Herbera.
\newblock One dimensional tilting modules are of finite type.
\newblock {\em Algebr. Represent. Theory}, 11(1):43--61, 2008.

\bibitem[BLG19]{BLG}
Silvana Bazzoni and Giovanna Le~Gros.
\newblock Enveloping classes over commutative rings.
\newblock Preprint, arXiv:1901.07921, 2019.

\bibitem[BP19a]{BP4}
Silvana Bazzoni and Leonid Positselski.
\newblock Covers and direct limits: A contramodule-based approach.
\newblock Preprint, arXiv:1907.05537, 2019.

\bibitem[BP19b]{BP1}
Silvana Bazzoni and Leonid Positselski.
\newblock {$S$}-almost perfect commutative rings.
\newblock {\em J. Algebra}, 532:323--356, 2019.

\bibitem[BP20]{BP3}
Silvana Bazzoni and Leonid Positselski.
\newblock Matlis category equivalences for a ring epimorphism.
\newblock {\em J. Pure Appl. Algebra}, 224(10):106398, 2020.

\bibitem[BP{\v{S}}20]{BPS}
Silvana Bazzoni, Leonid Positselski, and Jan {\v{S}}{\v{t}}ov{\'{\i}}{\v{c}}ek.
\newblock Projective covers of flat contramodules.
\newblock Preprint, arxiv.org/abs/1911.11720, 2020.

\bibitem[Cha60]{Ch}
Stephen~U. Chase.
\newblock Direct products of modules.
\newblock {\em Trans. Amer. Math. Soc.}, 97:457--473, 1960.

\bibitem[CT95]{CT}
Riccardo Colpi and Jan Trlifaj.
\newblock Tilting modules and tilting torsion theories.
\newblock {\em J. Algebra}, 178(2):614--634, 1995.

\bibitem[Eno81]{Eno}
Edgar~E. Enochs.
\newblock Injective and flat covers, envelopes and resolvents.
\newblock {\em Israel J. Math.}, 39(3):189--209, 1981.

\bibitem[GL91]{GL91}
Werner Geigle and Helmut Lenzing.
\newblock Perpendicular categories with applications to representations and
  sheaves.
\newblock {\em J. Algebra}, 144(2):273--343, 1991.

\bibitem[GT12]{GT12}
R{\"u}diger G{\"o}bel and Jan Trlifaj.
\newblock {\em Approximations and endomorphism algebras of modules. {V}olume
  1}, volume~41 of {\em de Gruyter Expositions in Mathematics}.
\newblock Walter de Gruyter GmbH \& Co. KG, Berlin, extended edition, 2012.
\newblock Approximations.

\bibitem[Hrb16]{H}
Michal Hrbek.
\newblock One-tilting classes and modules over commutative rings.
\newblock {\em J. Algebra}, 462:1--22, 2016.

\bibitem[Lam01]{Lam}
T.Y. Lam.
\newblock {\em A First Course in Noncommutative Rings}.
\newblock Graduate Texts in Mathematics. Springer, 2001.

\bibitem[Pos18]{Pos}
Leonid Positselski.
\newblock Triangulated {M}atlis equivalence.
\newblock {\em J. Algebra Appl.}, 17(4):1850067, 44, 2018.

\bibitem[Pos19]{Pos5}
Leonid Positselski.
\newblock Contramodules over pro-perfect topological rings.
\newblock Preprint, arXiv:1807.10671, 2019.

\bibitem[PS19a]{PSlav}
Leonid Positselski and Alexander Sl\'{a}vik.
\newblock On strongly flat and weakly cotorsion modules.
\newblock {\em Math. Z.}, 291(3-4):831--875, 2019.

\bibitem[P{\v{S}}19b]{PS19}
Leonid Positselski and Jan {\v{S}}{\v{t}}ov{\'{\i}}{\v{c}}ek.
\newblock The tilting-cotilting correspondence.
\newblock to appear on IMRN, doi.org/10.1093/imrn/rnz116, 2019.

\bibitem[Sil67]{Sil67}
L.~Silver.
\newblock Noncommutative localizations and applications.
\newblock {\em J. Algebra}, 7:44--76, 1967.

\bibitem[Ste75]{Ste75}
Bo~Stenstr{\"o}m.
\newblock {\em Rings of quotients}.
\newblock Springer-Verlag, New York, 1975.
\newblock Die Grundlehren der Mathematischen Wissenschaften, Band 217, An
  introduction to methods of ring theory.

\bibitem[\v{S}18]{S17}
Jan \v{S}aroch.
\newblock Approximations and {M}ittag-{L}effler conditions the tools.
\newblock {\em Israel J. Math.}, 226(2):737--756, 2018.

\bibitem[Xu96]{Xu}
Jinzhong Xu.
\newblock {\em Flat covers of modules}, volume 1634 of {\em Lecture Notes in
  Mathematics}.
\newblock Springer-Verlag, Berlin, 1996.

\end{thebibliography}
 \end{document}